\tikzstyle{vertex} = [fill,shape=circle,node distance=80pt]
\tikzstyle{edge} = [opacity=0.4,fill opacity=0.0,line cap=round, line join=round, line width=40pt]
\tikzstyle{elabel} =  [fill,shape=circle,node distance=30pt]
\def\COMMENT#1{}
\let\COMMENT=\footnote 
\def\TASK#1{}
\newdimen\margin   
\def\textno#1&#2\par{%
    \margin=\hsize
    \advance\margin by -4\parindent
           \setbox1=\hbox{\sl#1}%
    \ifdim\wd1 < \margin
       $$\box1\eqno#2$$%
    \else
       \bigbreak
       \hbox to \hsize{\indent$\vcenter{\advance\hsize by -3\parindent
       \sl\noindent#1}\hfil#2$}%
       \bigbreak
    \fi}
\def\eps{\varepsilon}
\def\ex{\mathbb{E}}
\def\prob{\mathbb{P}}
\def\rank{\text{rank}}
\newcommand{\polylog}{{\rm polylog}}
\DeclarePairedDelimiter\floor{\lfloor}{\rfloor} 
\DeclarePairedDelimiter\ceil{\lceil}{\rceil}  
\newtheorem{thm}{Theorem}
\newtheorem{lemma}[thm]{Lemma}
\newtheorem{prop}[thm]{Proposition}
\newtheorem{claim}[thm]{Claim}
\newcommand{\msc}[1]{\begin{center}MSC2010: #1.\end{center}}
\declaretheoremstyle[notefont=\bfseries,notebraces={}{},%
    headpunct={},postheadspace=1em,bodyfont=\normalfont\itshape]{mystyle}
\declaretheorem[style=mystyle,numbered=no,name=Theorem]{thm-hand}
\declaretheorem[style=mystyle,numbered=no,name=Proposition]{prop-hand}
\declaretheorem[style=mystyle,numbered=no,name=Claim]{claim-hand}
\declaretheorem[style=mystyle,numbered=no,name=Fact]{fact-hand}
\title{The Maker-Breaker Rado game on a random set of integers}
\author{Robert Hancock}
\begin{document}
\label{firstpage}
\begin{abstract} 
Given an integer-valued matrix $A$ of dimension $\ell \times k$ and an integer-valued vector $b$ of dimension $\ell$, the Maker-Breaker $(A,b)$-game on a set of integers $X$ is the game where Maker and Breaker take turns claiming previously unclaimed integers from $X$, and Maker's aim is to obtain a solution to the system $Ax=b$, whereas Breaker's aim is to prevent this. 
When $X$ is a random subset of $\{1,\dots,n\}$ where each number is included with probability $p$ independently of all others, we determine the threshold probability $p_0$ for when the game is Maker or Breaker's win, for a large class of matrices and vectors. This class includes but is not limited to all pairs $(A,b)$ for which $Ax=b$ corresponds to a single linear equation. 
The Maker's win statement also extends to a much wider class of matrices which include those which satisfy Rado's partition theorem. 
\end{abstract}
\date{\today}
\maketitle

\msc{91A24, 11B75, 05C57}

\section{Introduction}\label{secintro}
Given a finite set $X$ and a family of subsets of $X$, $\mathcal{F} \subseteq \mathcal{P}(X)$, we define the Maker-Breaker game on $(X,\mathcal{F})$ to be the game where Maker and Breaker take turns to select a previously unchosen element $x \in X$, and at the conclusion of the game, if Maker has claimed all of the elements of some $F \in \mathcal{F}$, then Maker wins. Otherwise Breaker has claimed at least one element $x$ in every set $F \subseteq \mathcal{F}$, and Breaker wins. The set $X$ is known as the \emph{board}, and the family $\mathcal{F}$ as the \emph{winning sets}. If Maker has a strategy so that no matter how Breaker plays, Maker can always win, then we call the game \emph{Maker's win}. If Maker has no such strategy, then since there is no draw scenario, the game is \emph{Breaker's win}.

Maker-Breaker games first stemmed from a seminal paper by Erd\H{o}s and Selfridge~\cite{ES}, where they proved their famous criterion which gives a general winning strategy for Breaker. Some well-known examples of Maker-Breaker games are where the board $X$ is the edge set of a complete graph $K_n$, and the winning sets $\mathcal{F}$ are all sets of edges which correspond to a perfect matching; a Hamilton cycle; or a fixed subgraph $H$. All of these games turn out to be Maker's win if $n$ is sufficiently large, therefore an adjustment to the game is required if we wish to make the problem of determining whose win the game is more interesting. 
This leads to the following two variations of Maker-Breaker board games, which have each received significant attention.
\begin{itemize}
\item{{\bf Biased board games.} Maker claims one element of the board per turn, whereas Breaker claims $b$ elements per turn, for some fixed $b \in \mathbb{N}$. We call the game the \emph{$(1:b)$ game on $(X, \mathcal{F})$}. Maker-Breaker games are `bias-monotone' (see e.g.~\cite{HKSS}).  This means that there exists a \emph{threshold bias} $b_0$ such that the $(1:b)$ game on $(X,\mathcal{F})$ is Maker's win if and only if $b<b_0$.}
\item{{\bf Random board games.} For a fixed probability $p$ and game $(X,\mathcal{F})$, let $X_p$ be obtained by including each element $x \in X$ with probability $p$ independently of all other elements, and let $\mathcal{F}_p:=\{F \in \mathcal{F}: x \in X_p \text{ for all } x \in F\}$. We then consider the game on the random board $(X_p,\mathcal{F}_p)$, noting that it is a probability space of games. By the monotonicity of the game $(X, \mathcal{F})$ being Maker's win, the existence of a threshold function follows from~\cite{BT}. That is, there exists a \emph{threshold probability} $p_0$ such that $$\lim_{n \to \infty} \mathbb{P} [\text{The game on $(X_p,\mathcal{F}_p)$ is Maker's win} ] = \begin{cases} 1 & \text{if $p \gg p_0$;} \\ 0 & \text{if $p \ll p_0$.} \end{cases}$$}
\end{itemize}
The interesting problem now is to determine the threshold bias and threshold probability for various Maker-Breaker games. For examples and further history of combinatorial board games, see e.g.~\cite{Beck1,HKSS}.

Random board games were first introduced by Stojakovi\'c and Szab\'o~\cite{SS}, who considered games played on a random subset of the edges of a complete graph. Note that this precisely corresponds to the edges of the Erd\H os--R\'enyi random graph $G_{n,p}$. Here, we focus on the game where Maker's aim is to obtain a solution to a system of linear equations within a random set of integers. To be precise, in our Maker-Breaker game, the board will be a random subset of $[n]:=\{1,\dots,n\}$, that is $[n]_p$, which is obtained by including each element of $[n]$ with probability $p$ independently of all other elements. The winning sets are all sets of size $k$ which correspond to a $k$-distinct solution (i.e. $x=(x_1,\dots,x_k)$ has each $x_i$ distinct) to a system of linear equations $Ax=b$, where $A$ is a fixed integer-valued matrix of dimension $\ell \times k$ and $b$ is a fixed integer-valued vector of dimension $\ell$. We call such a game played on a set of integers $X$ the \emph{$(A,b)$-game on $X$}. The class of all $(A,b)$-games are known as \emph{Rado games} (introduced in~\cite{KRSS}), due to the intimate link with Rado's partition theorem, which will be discussed shortly. 

Maker-Breaker games in this setting were first considered by Beck~\cite{Beck2}, who studied the \emph{van der Waerden game}. Here, Maker's aim is to obtain a $k$-term arithmetic progression $a,a+r,\dots,a+(k-1)r$ for some $a,r \in \mathbb{N}$ and fixed $k \in \mathbb{N}$. Note that the set of $k$-term arithmetic progressions in $[n]$ exactly coincides with the set of $k$-distinct solutions to $Ax=0$ in $[n]$ where $A$ is the $(k-2) \times k$ matrix given by $$\begin{pmatrix} 1 & -2 & 1 & 0 & \cdots & 0 & 0 & 0 \\ 0 & 1 & -2 & 1 & \cdots & 0 & 0 & 0 \\ & & & & \ddots & & & \\ 0 & 0 & 0 & 0 & \cdots & 1 & -2 & 1  \end{pmatrix}.$$ Beck determined that the smallest $n \in \mathbb{N}$ such that the $(A,0)$-game on $[n]$ is Maker's win is $n=2^{k(1+o(1))}$.

Here we consider a generalisation of the van der Waerden game using the following definitions. 
Let $A$ be a fixed integer-valued matrix of dimension $\ell \times k$ and $b$ a fixed integer-valued vector of dimension $\ell$.
We call a pair $(A,b)$ (and the matrix $A$ in the case where $b=0$) \emph{irredundant} if there exists a $k$-distinct solution to $Ax=b$ in $\mathbb{N}$, and \emph{partition regular} if for any finite colouring of $\mathbb{N}$, there is always a monochromatic solution (of any kind) to $Ax=b$. A cornerstone result in the area of Ramsey theory for integers is Rado's theorem~\cite{Rado}, which characterises all partition regular pairs $(A,b)$. In~\cite{HL}, Hindman and Leader extended Rado's theorem to characterise all pairs $(A,b)$ for which given any finite colouring of $\mathbb{N}$, there is always a monochromatic $k$-distinct solution to $Ax=b$ (in particular, if $b=0$ then $A$ must be irredundant and partition regular). Hindman and Leader's result implies that given such a pair $(A,b)$, 
if $n$ is sufficiently large then however we $2$-colour $[n]$, there exists a monochromatic $k$-distinct solution to $Ax=b$.  
So in order for Breaker to win the $(A,b)$-game on $[n]$, he must himself obtain a $k$-distinct solution. But then by the classic strategy-stealing argument, Maker can claim this solution for herself. Thus this game is easily shown to be Maker's win. Therefore it is interesting to consider biased and random versions of the $(A,b)$-game on $[n]$. In a very recent paper of Kusch, Ru\'e, Spiegel and Szab\'o~\cite{KRSS}, the biased version is considered. In this paper, we consider the random version. 

In fact (as in~\cite{KRSS}), we consider a wider class of pairs $(A,b)$. Let $(*)$ be the following matrix property: 
\begin{itemize}
\item[$(*)$] Under Gaussian elimination $A$ does not have any row which consists of precisely two non-zero rational entries. 
\end{itemize} 

An equivalent definition, \emph{abundant}, is given in~\cite{KRSS}. 
Note that in~\cite{HST} it is proven that irredundant partition regular matrices are a strict subclass of irredundant matrices which satisfy $(*)$. 

We need the following definitions. For a matrix $A$ index the columns by $[k]$. For a partition $W \dot \cup \overline{W} = [k]$ of the columns of $A$, let $A_{\overline{W}}$ be the matrix obtained from $A$ by restricting to the columns indexed by $\overline{W}$. Let $\rank (A_{\overline{W}})$ be the rank of $A_{\overline{W}}$, where $\rank (A_{\overline{W}})=0$ if $\overline{W} = \emptyset$. Then define 
$$m(A):= \max_{\stackrel{W \dot \cup \overline{W} = [k]}{|W| \geq 2}} \frac{|W|-1}{|W|-1+\rank (A_{\overline{W}})-\rank (A)}.$$
The biased game result of~\cite{KRSS} is the following.
\begin{thm}[\cite{KRSS}, Theorem 1.4 and Proposition 1.5]\label{KRSSthm}
Let $A$ be a fixed integer-valued matrix of dimension $\ell \times k$ and $b$ a fixed integer-valued vector of dimension $\ell$. Given the pair $(A,b)$ and the matrix $A$ are both irredundant, then we have the following:
\begin{itemize}
\item[(i)]{If $A$ satisfies $(*)$ then the threshold bias for the $(A,b)$-game on $[n]$ is $\Theta(n^{1/m(A)})$;}
\item[(ii)]{If $A$ does not satisfy $(*)$ then the $(1:2)$ $(A,b)$-game on $[n]$ is Breaker's win.}
\end{itemize}
\end{thm}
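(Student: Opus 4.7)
Assume $(*)$ fails. Gaussian elimination applied to $[A\,|\,b]$ exposes a constraint of the form $\alpha x_i + \beta x_j = \gamma$ with $\alpha,\beta$ nonzero rationals, and every $k$-distinct solution to $Ax=b$ must satisfy it. Build an auxiliary graph $G$ on $[n]$ by joining $u \neq v$ whenever $\alpha u + \beta v = \gamma$ or $\alpha v + \beta u = \gamma$; for any fixed vertex, each equation uniquely determines a partner, so $\Delta(G) \le 2$. In the $(1:2)$ game Breaker adopts the obvious guardian strategy: after each Maker move on some $v$, Breaker spends his two moves claiming all currently unclaimed $G$-neighbours of $v$. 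Thus no edge of $G$ is ever fully owned by Maker, so Maker cannot own both of an $(x_i,x_j)$-pair satisfying $\alpha x_i + \beta x_j = \gamma$ and hence cannot complete a $k$-distinct solution.

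\textbf{Part (i), Breaker's side ($b \gg n^{1/m(A)}$).} Fix a partition $W \dot \cup \overline W = [k]$ with $|W| \ge 2$ realising the maximum in the definition of $m(A)$, and set $s := |W|-1+\rank(A_{\overline W})-\rank(A)$, so that $1/m(A) = s/(|W|-1)$. The plan is a biased Erd\H{o}s--Selfridge-style potential argument, applied not to the full hypergraph of $k$-distinct solutions but to a carefully weighted subfamily of ``bottleneck'' configurations coming from the $W$-coordinate projections of solutions. A rank calculation for $A_W$ and $A_{\overline W}$ controls their number so that the resulting danger sum falls below unity exactly when $b^{|W|-1}$ exceeds the relevant power of $n$, namely when $b \gg n^{s/(|W|-1)} = n^{1/m(A)}$. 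The maximising choice of $W$ guarantees that no other partition imposes a tighter constraint on $b$, so the argument is sharp.

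\textbf{Part (i), Maker's side ($b \ll n^{1/m(A)}$), and the main obstacle.} This is the principal difficulty. I would pursue an explicit Maker strategy via a weighted-threat potential in the spirit of Beck's criterion for Maker: at each turn Maker claims the element maximising a weighted count of partial $W$-solutions that remain extendable to a $k$-distinct solution given her current holdings. Each Maker move increases this threat potential, whereas each of the $b$ Breaker moves can depress it only by a controlled factor; a level-by-level accounting tied to the same partition $W$ as above shows the net growth is positive below the threshold $b \ll n^{1/m(A)}$, eventually leaving Maker in possession of a complete $k$-distinct solution (irredundance of $(A,b)$ supplying abundant completions). The delicate points are (a) calibrating the weights so that the Breaker-per-move depression is bounded sharply by the rank data captured in $m(A)$, and (b) preserving $k$-distinctness throughout, i.e.\ excluding partial solutions forced into coordinate coincidences, so that the final solution Maker extracts is genuinely $k$-distinct rather than a degenerate collapse.
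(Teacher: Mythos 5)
This statement is quoted from~\cite{KRSS} (their Theorem 1.4 and Proposition 1.5); the present paper contains no proof of it, so your attempt can only be judged on its own merits. Your part (ii) is complete and correct, and is the natural argument: Gaussian elimination on the augmented matrix exposes a constraint $\alpha x_i+\beta x_j=\gamma$ that every solution of $Ax=b$ satisfies, the auxiliary graph on $[n]$ has maximum degree at most $2$, and the guardian response with bias $2$ ensures Maker never owns both endpoints of an edge, hence never a $k$-distinct solution.

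Part (i), however, is a plan rather than a proof, and the plan as stated does not work in either direction. For Breaker's side, the natural instantiation of your sketch is to apply Beck's biased Erd\H{o}s--Selfridge criterion to the family of $W$-projections of $k$-distinct solutions for the extremal partition $W$. But that family has $\Theta\left(n^{|W|-\ell+\rank(A_{\overline W})}\right)=\Theta(n^{s+1})$ sets, each of size $|W|$, so the danger sum drops below the required bound only when $b^{|W|-1}\gtrsim n^{s+1}$, i.e.\ $b\gg n^{1/m(A)+1/(|W|-1)}$ -- polynomially above the claimed threshold. Concretely, for $x+y=z$ one has $1/m(A)=1/2$ but this computation only yields Breaker's win for bias roughly $n$, not $n^{1/2}$. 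The ``carefully weighted subfamily of bottleneck configurations'' that is supposed to close this gap is never defined, and it is precisely where all the work lies (in~\cite{KRSS} the Breaker bound requires a substantially more involved strategy than a single potential count over one family). For Maker's side, the known proofs at the sharp bias proceed via the Bednarska--\L uczak philosophy: Maker plays uniformly at random and one combines a supersaturation statement (many $k$-distinct solutions in the relevant random-like sets) with a deletion argument to show Breaker's claimed elements cannot destroy all of them; your proposed deterministic ``threat potential'' is not exhibited, the asserted monotonicity (Maker's move always increases it, each of Breaker's $b$ moves depresses it by a controlled factor) is not proved and is not plausible without a concrete definition, and ``irredundance supplying abundant completions'' is far weaker than the supersaturation input actually needed. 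So both halves of (i) currently rest on unproved claims.
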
 

In this paper we mainly focus on the case when $A$ satisfies $(*)$, though the case where $A$ does not satisfy $(*)$ does feature in our first result and also Section~\ref{secrem}. 

We use the abbreviation w.h.p. for \emph{with high probability} (with probability tending to $1$ as $n$ tends to infinity). Our first result gives the threshold for the random $(A,b)$-game whenever $Ax=b$ corresponds to a single linear equation.
\begin{thm}\label{singlemain}
Let $A$ be a fixed integer-valued matrix of dimension $1 \times k$ and $b$ a fixed integer (i.e. $Ax=b$ corresponds to a single linear equation $a_1 x_1 + \dots + a_k x_k=b$ with the $a_i$ non-zero integers). 
\begin{itemize}
\item[(i)]{If the pair $(A,b)$ is irredundant and $A$ is irredundant and satisfies $(*)$, then the $(A,b)$-game on $[n]_p$ has a threshold probability of $\Theta(n^{-\frac{k-2}{k-1}})$;}
\item[(ii)]{If the pair $(A,b)$ is irredundant and $A$ is irredundant and does not satisfy $(*)$, then the $(A,b)$-game on $[n]_p$ is Maker's win if $p \gg n^{-1/3}$ and Breaker's win if $p \ll n^{-1/3}$;}
\item[(iii)]{If the pair $(A,b)$ is irredundant and $A$ is not irredundant, then 
\begin{itemize}
\item[(a)]{the $(A,b)$-game on $[n]_p$ is Breaker's win w.h.p. for any $p=o(1)$ if the coefficients $a_i$ are all positive or all negative;}
\item[(b)]{the $(A,b)$-game on $[n]_p$ is Maker's win if $p \gg n^{-1/3}$ and Breaker's win if $p \ll n^{-1/3}$ otherwise;}
\end{itemize}}
\item[(iv)]{If the pair $(A,b)$ is not irredundant, then the $(A,b)$-game on $[n]$ is (trivially) Breaker's win.}
\end{itemize} 
\end{thm}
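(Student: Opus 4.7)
My plan is to handle the four parts in sequence. Part~(iv) is immediate by definition. For Part~(iii)(a), all coefficients $a_i$ share a sign, so $(A,b)$ being irredundant forces $b$ to have the same sign, and any solution in $\mathbb{N}$ satisfies $|x_i| \le |b/a_i|$; the total number of winning sets in $[n]$ is then a constant $M = M(A,b)$, and the expected number surviving in $[n]_p$ is $M p^k = o(1)$, so Breaker wins w.h.p.

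For Parts~(ii) and (iii)(b) the hypotheses reduce to $k=2$: in Part~(ii), $(*)$ failing with all $a_i \neq 0$ forces $k=2$, while in Part~(iii)(b), $A$ not irredundant with mixed signs forces $k=2$ and $a_1 = -a_2$. In each case the winning sets form a graph $G$ on $[n]$ of maximum degree at most~$2$, since each $x$ has at most two partners $y = (b-a_1 x)/a_2$ and $y = (b-a_2 x)/a_1$. A direct count shows $G$ has $\Theta(n)$ paths of length~$2$. I claim Maker wins the game on a vertex subset $X$ iff $G[X]$ has a vertex of degree~$2$: if so, Maker plays this center and claims whichever end of the $P_3$ Breaker does not block; otherwise $G[X]$ is a matching, on which Breaker's pairing strategy wins. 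A standard second-moment argument shows $G[[n]_p]$ contains a $P_3$ w.h.p.\ iff $p \gg n^{-1/3}$, giving the stated threshold.

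Part~(i) is the main case. For the Breaker's side ($p \ll n^{-(k-2)/(k-1)}$), I would show that w.h.p.\ every winning set $F \subseteq [n]_p$ contains a pair of elements which is shared with no other winning set on $[n]_p$; then Breaker uses the standard pairing strategy that claims one element of each such pair whenever Maker claims the other. The key structural input is $(*)$, which together with a first-moment calculation on pairs of overlapping winning sets ensures that such shared pairs are rare at this density. For the Maker's side ($p \gg n^{-(k-2)/(k-1)}$), the plan is to invoke Theorem~\ref{KRSSthm}(i), which gives Maker a winning strategy in the biased $(1:b)$ game on $[n]$ for $b = c\, n^{(k-2)/(k-1)}$ with $c$ a small constant. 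In the random game, Maker treats $[n] \setminus [n]_p$ as if pre-claimed by Breaker and then runs her biased strategy; a Chernoff bound shows $|[n]\setminus [n]_p|$ is within the $b$-fold bias allowance w.h.p., so her strategy completes a solution entirely inside $[n]_p$.

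The main obstacle lies in Part~(i). On the Breaker side, ruling out higher-order bad overlap patterns (three or more winning sets sharing two-element intersections) requires a careful first-moment argument over all such configurations, leaning on $(*)$ to bound the number of extensions of any partial solution. On the Maker side, the subtle point is that Maker's biased strategy must be implementable without foreknowledge of which elements are in $[n]_p$, which amounts to verifying that the biased game on the fixed board $[n]$, with Breaker pre-claiming $[n]\setminus [n]_p$, is still a Maker's win w.h.p.
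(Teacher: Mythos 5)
Parts (ii), (iii) and (iv) of your proposal are correct and essentially the paper's own argument: the paper also reduces (ii) and (iii)(b) to the $k=2$ situation, characterises Maker's win by the presence of a triple $\{(\alpha x-b)/\beta,\,x,\,(\beta x+b)/\alpha\}$ (a path of length two in your graph $G$), counts $\Theta(n)$ such triples (this needs the small CRT/divisibility argument of Claim~\ref{existz}, which your ``direct count'' glosses over), and uses Chernoff/Markov plus a pairing strategy exactly as you describe; (iii)(a) and (iv) are as in the paper. The genuine gaps are both in part (i), which is precisely the content of Theorem~\ref{main}.

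On the Maker side, the proposed reduction to Theorem~\ref{KRSSthm}(i) does not work. Treating $[n]\setminus[n]_p$ as pre-claimed by Breaker confronts Maker with an opponent who owns $(1-p)n=n-o(n)$ elements before her first move, whereas a winning strategy for the $(1:b)$ game with $b=cn^{(k-2)/(k-1)}$ is only guaranteed against a Breaker who has accumulated $bt=o(n)$ elements after $t$ rounds for small $t$; there is no coupling or domination between the two games, so the biased strategy gives no guarantee whatsoever in the random setting (and the known examples of $H$-games whose random threshold differs from the inverse of the threshold bias show that no such general transfer principle can exist). The paper instead proves Maker's win directly in the random game, via the container theorem (Theorem~\ref{matcontainer}), the supersaturation bound $\mu(n,A,b)\le(1-\delta)n$ (Theorem~\ref{mubound}), and the Erd\H{o}s--Selfridge criterion (Theorem~\ref{ESCri}) applied to an auxiliary game in which Maker plays the Breaker role; this is the whole of Section~2 and also yields the resilience statement.

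On the Breaker side, the pairing strategy you describe is not well defined: a pairing strategy requires the designated pairs to be pairwise disjoint, and at $p=cn^{-(k-2)/(k-1)}$ the expected number of pairs of solutions sharing a single element is of order $n^{1/(k-1)}\to\infty$, so w.h.p.\ many winning sets meet in one element and their ``private pairs'' can share that element, leaving Breaker unable to answer consistently. Worse, the corrected requirement --- a system of pairwise disjoint pairs, one inside each winning set --- provably cannot exist for configurations such as the Pasch configuration (four winning sets on six elements when $k=3$) or $(3,u,2)$-stars, and such one-bad-edge configurations (overlapping pairs, loose cycles, stars, links, Pasch configurations) cannot in general be excluded by a first-moment bound at this density: the paper's probabilistic lemma (Lemma~\ref{pp1}) only kills \emph{bicycles}, i.e.\ configurations with two bad edges, including arbitrarily long loose-path variants which force the $\log n$ truncation. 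Consequently Breaker must play adaptive, non-pairing strategies on the surviving special components, and establishing that bicycle-free components have exactly the structure on which such strategies succeed is the bulk of Section~3 (Claims~\ref{dlc1}--\ref{dlc4} and Lemmas~\ref{dl1},~\ref{dl2}). Your ``careful first-moment argument over higher-order overlap patterns'' does not substitute for this: the obstruction is not only rare dense overlaps but the abundant single-element intersections and the small sporadic structures, and the difficulty lies in the strategy, not in the counting.
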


Note that most `interesting' equations lies in the class of equations given by (i). Here $A$ satisfying $(*)$ implies that $k \geq 3$; in particular the class given by (i) includes several natural equations that have been extensively studied, e.g. $x+y=z$, $x+y=z+t$ and $x+y=2z$. In the $(A,b)$-games corresponding to these equations, Breaker's aim is to restrict Maker's set to being a \emph{sum-free set}, a \emph{Sidon set} and a \emph{progression-free set} respectively. The remaining classes of equations given by (ii)--(iv) are all in some sense `trivial'; the proofs of these statements appear in Section~\ref{secone}.

In fact Theorem~\ref{singlemain}(i) will follow immediately from a much more general theorem. First we need some more definitions. We say that an $\ell \times k$ matrix $A$ of full rank $\ell$ is \emph{strictly balanced} if, for every $W \subseteq [k]$ for which $2 \leq |W| < k$, the inequality 
\begin{align*}
\frac{|W|-1}{|W|-1+\rank (A_{\overline{W}})-\ell} < \frac{k-1}{k-1-\ell}
\end{align*}
holds. In particular note that if $A$ is strictly balanced then $m(A)=\frac{k-1}{k-1-\ell}$ (though the converse is not true). Given an irredundant matrix $A$ which satisfies $(*)$, we define the \emph{associated matrix $B(A)$} to be a strictly balanced, irredundant matrix of full rank which satisfies $(*)$, which is found by using elementary row operations on $A$ then deleting some rows and columns, and satisfies $m(B(A))=m(A)$. The fact that such a matrix exists is not entirely obvious, and is essentially proven in~\cite{RR4}. We provide further details in Section~\ref{secbreaker}. Also note that if $A$ itself is strictly balanced then we simply have $B(A)=A$.

Let $\mu(n,A,b)$ denote the size of the largest subset of $[n]$ which does not contain a $k$-distinct solution to $Ax=b$. The main result of this paper is the following.
\begin{thm}\label{main}
Let $A$ be a fixed integer-valued matrix of dimension $\ell' \times k'$ and $b$ a fixed integer-valued vector of dimension $\ell'$. Given the pair $(A,b)$ is irredundant and $A$ is irredundant and satisfies $(*)$ we have the following:
\begin{itemize}
\item[(i)]{Let $\eps>0$. There exists a positive constant $C$ such that if $p > Cn^{-1/m(A)}$, then for any $R \subseteq [n]_p$ satisfying $|R| \leq (1-\frac{\mu(n,A,b)}{n}-\eps)np$, we have
$$ \lim_{n \to \infty} \prob \left( \text{Maker wins the $(A,b)$-game on $[n]_p \setminus R$} \right)=1.$$}
\item[(ii)]{Suppose the associated matrix $B(A)$ is an $\ell \times k$ matrix of full rank $\ell$, where $\ell$ divides $k-1$. There exists a positive constant $c$ such that if $p < cn^{-1/m(A)}$ then 
$$ \lim_{n \to \infty} \prob \left( \text{Breaker wins the $(A,b)$-game on $[n]_p$} \right)=1.$$}
\end{itemize}
\end{thm}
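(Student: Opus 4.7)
For part (i) the plan is to combine a \emph{resilience-type} strengthening of the random Rado theorem with a classical strategy-stealing argument. The key ingredient would be the following \emph{Ramsey-resilience} statement: for $p > Cn^{-1/m(A)}$ with $C$ sufficiently large, with high probability every subset $U\subseteq [n]_p$ satisfying $|U|\geq (\mu(n,A,b)/n + \eps/2)\,|[n]_p|$ has the property that every $2$-colouring of $U$ admits a monochromatic $k'$-distinct solution to $Ax=b$. This is the natural resilience version of the random partition-regularity theorems for Rado systems and should follow from standard transference / hypergraph-container machinery applied to $(A,b)$. The hypothesis $|R|\leq (1-\mu(n,A,b)/n-\eps)np$, combined with a Chernoff estimate on $|[n]_p|$, yields $|[n]_p\setminus R|\geq (\mu(n,A,b)/n+\eps/2)|[n]_p|$ w.h.p., so the Ramsey-resilience applies with $U:=[n]_p\setminus R$. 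A classical strategy-stealing argument then closes the proof: if Breaker had a winning strategy in the $(A,b)$-game on $U$, Maker could make an arbitrary first move and then imitate Breaker's strategy, producing a $2$-colouring of $U$ with no monochromatic solution --- contradicting the Ramsey-resilience of $U$.

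For part (ii) the plan is first to pass from $A$ to the associated strictly balanced matrix $B(A)$ (of dimension $\ell\times k$ with $\ell\mid(k-1)$) and then run a box-game / Erdős--Selfridge-style potential argument for Breaker. Since every $k$-distinct solution to $B(A)x=b'$ extends to a $k'$-distinct solution of $Ax=b$, a winning Breaker strategy in the $(B(A),b')$-game transfers to one in the $(A,b)$-game. For $p<cn^{-1/m(A)}$ with $c$ small, a first-moment calculation shows that the expected number of $k$-distinct solutions to $B(A)x=b'$ in $[n]_p$ is of order $c^{k-1}np$, i.e.\ a small constant times Breaker's total number of moves; moreover, by strict balance, the counts of partial (extendable) sub-solutions indexed by $W\subsetneq[k]$ are also uniformly controlled. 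Breaker plays a weighted potential strategy, with each partial sub-solution weighted by an appropriate danger function, cancelling the largest danger at each turn, so that his potential stays subcritical throughout the game and no Maker solution is ever completed.

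The principal obstacle is matching the sharp threshold in part (ii): a vanilla Erdős--Selfridge estimate gives Breaker's win only for $p\ll n^{-(k-\ell)/k}$, strictly smaller than the target $p\ll n^{-(k-1-\ell)/(k-1)}=n^{-1/m(A)}$. To close this gap one must exploit the strict balance of $B(A)$ through a refined box-game analysis (in the spirit of the arguments in \cite{KRSS} and \cite{RR4}), combined with strong (Janson-type) concentration bounds on sub-solution counts in $[n]_p$; the divisibility condition $\ell\mid(k-1)$ ensures that $np$ grows as $n^{1/t}$ for a positive integer $t=(k-1)/\ell$, which is crucial for the cleanest form of the integer-valued potential. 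Part (i) is, by comparison, mostly a packaging exercise once the Ramsey-resilience statement is in hand; the real content lies in that resilience theorem itself.
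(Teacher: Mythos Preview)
Your strategy-stealing plan has a genuine gap: it presupposes a Ramsey-type statement that is simply false in the stated generality. The hypothesis of Theorem~\ref{main} is only that $A$ is irredundant and satisfies $(*)$, \emph{not} that $A$ is partition regular, and these classes differ (e.g.\ $x+y=3z$). For such non-partition-regular $A$ there is a $2$-colouring of all of $\mathbb{N}$ with no monochromatic $k'$-distinct solution to $Ax=b$, so certainly no ``Ramsey-resilience'' statement of the form you need can hold for large subsets of $[n]_p$. The paper explicitly notes this limitation of strategy-stealing and instead proceeds via containers: if Maker's final set $M$ were solution-free then $M\subseteq f(S)$ for some fingerprint $S\in\mathcal S$ with $S\subseteq M$; Maker therefore plays the \emph{Breaker} role in an auxiliary game with winning sets $\{X\setminus f(S):S\in\mathcal S,\ S\subseteq X\}$, and the Erd\H{o}s--Selfridge criterion applies because (a) each $X\setminus f(S)$ is large (Chernoff plus $|f(S)|\leq\mu(n,A,b)+\eps n/4$) and (b) there are few relevant fingerprints (a first-moment bound using $|S|\leq Dn^{1-1/m(A)}$). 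This yields an explicit Maker strategy and needs no partition regularity.

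\textbf{Part (ii).} Your reduction to the strictly balanced matrix $B(A)$ matches the paper, though the justification is the reverse of what you wrote: every solution to $Ax=b$ \emph{projects} to a solution of $B(A)x_W=b'$, so blocking the latter blocks the former. Beyond this reduction the paper does \emph{not} run any potential or box-game argument. Instead it analyses the $k$-uniform hypergraph $H$ of solutions in $[n]_p$ structurally: two deterministic lemmas classify the components of $H$ that contain no ``bicycle'' (a short list of small dense subhypergraphs built from two bad edges) and exhibit an explicit pairing-type Breaker strategy on each such component; a first-moment lemma then shows that for $p<cn^{-1/m(A)}$ w.h.p.\ $H$ contains no bicycle at all. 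The divisibility hypothesis $\ell\mid(k-1)$ enters only in this last step, to turn the strict-balance inequality $\ell(k-|W|)-(k-1)\rank(B_{\overline W})<0$ into the integer gap $\leq -\ell$, which is what makes the expected number of bicycles $o(1)$; it has nothing to do with integer-valued potentials. Your proposed fix of the Erd\H{o}s--Selfridge shortfall via ``refined box-game analysis'' and Janson bounds is left entirely unspecified and there is no indication it reaches the correct threshold.
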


First note that it follows from a supersaturation result (Lemma 4.1 in~\cite{KRSS}) that for all pairs $(A,b)$ as stated in Theorem~\ref{main}, there exist $n_0=n_0(A,b)$, $\delta=\delta(A,b)>0$, such that for all integers $n \geq n_0$ we have $\mu(n,A,b) \leq (1-\delta)n$. Thus in particular Theorem~\ref{main}(i) implies that there exists a positive constant $C$ such that if $p > Cn^{-1/m(A)}$, then Maker wins the $(A,b)$-game on $[n]_p$ w.h.p. Also, note that if $A$ is a $1 \times k$ matrix with non-zero entries, then it is strictly balanced, and so $B(A)=A$. Thus $A$ is a matrix which satisfies the hypothesis of Theorem~\ref{main}(ii). Theorem~\ref{singlemain}(i) follows immediately from these two comments.

Another example of a class of pairs $(A,b)$ for which Theorem~\ref{main} gives the threshold probability up to a constant factor are all irredundant pairs $(A,b)$ such that $A$ is irredundant, has no columns consisting entirely of zeroes, satisfies $(*)$ and is of dimension $2 \times k'$ for some odd $k'$. This follows since by construction either $B(A)=A$ or $B(A)$ is a $1 \times k$ matrix for some $k < k'$. Either way, $B(A)$ then satisfies the hypothesis of Theorem~\ref{main}(ii).

For the Maker's win statement, the fact that we can delete a certain fraction of elements from $[n]_p$ and still have Maker's win w.h.p. means we have a \emph{resilience} theorem. The study of how strongly a graph or set satisfies a certain property has a rich history. An early famous example is Tur\'an's theorem, which tells us that given $r\geq 3$ we must delete a $\frac{1}{r-1}$-fraction of the edges of the complete graph in order to obtain a graph which does not contain $K_r$ as a subgraph. The \emph{global resilience} of a graph or set property generally asks how many edges or elements must be deleted in order to rid the graph or set of the property. Note that in our resilience result, the property is the game being Maker's win w.h.p., and the resilience is best possible in terms of the bound on the size of the set $R$: Indeed, since the largest subset of $[n]$ with no $k$-distinct solutions to $Ax=b$ has size $\mu(n,A,b)$, w.h.p. $[n]_p$ contains a subset $S$ of size $p(\mu(n,A,b)-\eps n)$ with no $k$-distinct solutions to $Ax=b$. Thus we can remove $(1-\frac{\mu(n,A,b)}{n}+\eps)np$ elements from $[n]_p$ to obtain $S$ (noting that a game on $S$ is trivially Breaker's win). 

It is very interesting to note the parallels between our theorem and the random Rado theorem and the resilience theorem stated below. In particular, the parameter $m(A)$ is also crucial here. First, call a set of integers $X$ \emph{$(A,b,r)$-Rado} if for any $r$-colouring of $X$, there is always a monochromatic $k$-distinct solution to $Ax=b$ in $X$. (Note $X$ being $(A,b,1)$-Rado just means there is a $k$-distinct solution to $Ax=b$ in $X$.) 

\begin{thm}[\cite{FRS,RR4}]\label{randomrado}
Let $A$ be a fixed integer-valued matrix of dimension $\ell \times k$ and let $r \geq 2$ be a positive integer. Given $A$ is irredundant and partition regular, there exist constants $C,c > 0$ such that
$$\lim_{n \to \infty} \mathbb{P} [ [n]_p \text{ is $(A,0,r)$-Rado}] = \begin{cases} 1 \text{ if $p> Cn^{-1/m(A)}$;} \\ 0 \text{ if $p<cn^{-1/m(A)}$.} \end{cases}$$
\end{thm}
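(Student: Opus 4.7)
The plan is to prove the two implications separately using the random-Ramsey machinery of hypergraph containers and alteration. First I would reduce to the case that $A$ is strictly balanced: by the construction in \cite{RR4}, the associated matrix $B(A)$ is strictly balanced, irredundant, partition regular, and satisfies $m(B(A))=m(A)$. Since a $B(A)$-solution lifts to an $A$-solution under the row/column operations linking $A$ and $B(A)$, a set that is $(B(A),0,r)$-Rado is also $(A,0,r)$-Rado, and thresholds agree. Thus I assume $A$ is strictly balanced of full rank $\ell$, giving $m(A)=(k-1)/(k-1-\ell)$.

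For the $1$-statement ($p>Cn^{-1/m(A)}$), consider the $k$-uniform hypergraph $H_n$ on $[n]$ whose edges are the $k$-distinct solutions of $Ax=0$. Strict balance translates precisely into the co-degree bounds required by the hypergraph container theorem of Saxton--Thomason and Balogh--Morris--Samotij, while partition regularity together with a Varnavides-style supersaturation step (compare Lemma~4.1 of \cite{KRSS}) guarantees that any $r$-colouring of $[n]$ contains $\Omega(n^{k-\ell})$ monochromatic solutions. The container theorem then produces a family $\mathcal{C}$ of containers with $|C|\le(1-\delta)n$ for each $C\in\mathcal{C}$ and $|\mathcal{C}|\le\exp(o(np))$, such that every solution-free subset of $[n]$ lies in some $C\in\mathcal{C}$. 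Any bad $r$-colouring of $[n]_p$ places each colour class in some $C_i\in\mathcal{C}$, so $[n]_p\subseteq C_1\cup\dots\cup C_r$. The quantitative form of Rado's theorem forces $|[n]\setminus\bigcup_i C_i|\ge\eps n$: otherwise, extending any partition of $\bigcup_i C_i$ by an arbitrary colouring of the remaining $<\eps n$ vertices would produce a monochromatic solution entirely inside some $C_i$, contradicting its solution-freeness. Hence the failure probability is at most $|\mathcal{C}|^r\cdot(1-p)^{\eps n}=\exp(o(np)-\eps np)=o(1)$.

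For the $0$-statement ($p<cn^{-1/m(A)}$), I would use a deletion-colouring argument. A direct calculation with $m(A)=(k-1)/(k-1-\ell)$ gives $\mathbb{E}[\#\text{solutions in }[n]_p]=\Theta(n^{k-\ell}p^k)=\Theta(c^{k-1}np)$, so solutions and vertices of $[n]_p$ are of comparable order, with an adjustable constant. Assign a uniformly random $r$-colouring to $[n]_p$; the expected number of monochromatic solutions is $r^{1-k}$ times the above, which can be made small by choosing $c$ small. Then a Janson-style alteration or a Lovász-local-lemma argument produces a monochromatic-solution-free $r$-colouring with positive probability, hence w.h.p.\ after standard concentration. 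The main obstacle, and the technical heart of \cite{RR4}, is controlling correlated pairs of solutions sharing a proper subset $W\subsetneq[k]$ of coordinates: strict balance is precisely the condition ensuring that the expected number of such overlapping pairs indexed by each $W$ with $|W|\ge 2$ is subcritical at $p=\Theta(n^{-1/m(A)})$, which is exactly what makes the Janson/local-lemma input valid and lets the alteration go through.
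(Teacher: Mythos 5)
This theorem is not proved in the paper at all: it is quoted from \cite{FRS,RR4}, and the paper only adapts the R\"odl--Ruci\'nski $0$-statement machinery in Section~\ref{secbreaker}. Measured against those arguments, your sketch has genuine gaps in both halves. For the $1$-statement, your opening reduction goes the wrong way: Proposition~\ref{strictred} says that every solution of $Ax=0$ \emph{restricts} to a solution of $B(A)x_W=0$, not that $B(A)$-solutions lift to $A$-solutions; consequently $[n]_p$ being $(B(A),0,r)$-Rado does not imply it is $(A,0,r)$-Rado (the deleted coordinates need not lie in $[n]_p$, let alone get the right colour), and it is the $0$-statement, not the $1$-statement, that transfers through the associated matrix. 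The reduction is also unnecessary, since supersaturation and the container theorem (Theorem~\ref{matcontainer}) apply directly to any irredundant $A$ satisfying $(*)$. Two further steps need repair: containers are not solution-free --- they merely contain at most $\delta n^{k-\ell}$ solutions --- so the claim that a colouring of $C_1\cup\dots\cup C_r$ would ``contradict solution-freeness'' must be replaced by the standard count (supersaturation constant versus $r\delta n^{k-\ell}$ solutions inside containers plus $O(\varepsilon n^{k-\ell})$ solutions meeting the at most $\varepsilon n$ uncovered elements); and $|\mathcal{C}|\le\exp(o(np))$ is not true as stated (the number of fingerprints carries a $\log n$ factor), so one must instead union bound over fingerprints $S_1,\dots,S_r\subseteq[n]_p$ weighted by $p^{|S_1|+\dots+|S_r|}$, exactly as in the proof of Theorem~\ref{main}(i).

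For the $0$-statement the reduction to $B(A)$ is legitimate, but the probabilistic argument you propose does not work. At $p=cn^{-1/m(A)}$ the expected number of monochromatic solutions under a uniformly random $r$-colouring is $\Theta(c^{k-1}np\,r^{1-k})\to\infty$, so it cannot be ``made small''; an alteration that deletes vertices is not available, since the colouring must be of all of $[n]_p$; and the local lemma does not apply, because the maximum number of solutions through a vertex is w.h.p.\ unbounded (of order $\log n/\log\log n$), while the expected number of overlapping pairs is $\Theta(1)$ rather than $o(1)$, and in any case bounding overlaps in expectation says nothing by itself about the existence of a proper colouring. The actual content of the R\"odl--Ruci\'nski proof --- which this paper mirrors with its deterministic lemmas and Lemma~\ref{pp1} --- is a structural statement that any component of the solution hypergraph which cannot be coloured greedily must contain one of a short list of configurations (a valid edge order with two bad edges, or a very long loose path), followed by a first-moment computation showing such configurations do not appear below the threshold. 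Your sketch correctly identifies this as the technical heart but replaces it with an unsubstantiated appeal to Janson/LLL, so the $0$-statement is not established.
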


\begin{thm}[\cite{HST,Spie}]\label{resilientrado}
Let $A$ be a fixed integer-valued matrix of dimension $\ell \times k$. Given $A$ is irredundant and satisfies $(*)$, for all $\eps>0$, there exists a positive constant $C$ such that if $p > Cn^{-1/m(A)}$, then for any $R \subseteq [n]_p$ satisfying $|R| \leq (1-\frac{\mu(n,A,0)}{n}-\eps)np$, we have
$$\lim_{n \to \infty} \mathbb{P} [ [n]_p \setminus R \text{ is $(A,0,1)$-Rado}] = 1.$$
\end{thm}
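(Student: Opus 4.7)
\medskip

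My plan is to recast the statement as a resilience statement for the independence number of a solution hypergraph and then attack it via the hypergraph container method. Let $H$ be the $k$-uniform hypergraph on vertex set $[n]$ whose edges are the $k$-distinct solutions to $Ax=0$. A set $S \subseteq [n]_p \setminus R$ fails to be $(A,0,1)$-Rado exactly when $S$ is independent in $H$. Since $R$ may be chosen by an adversary with $|R| \leq (1-\mu(n,A,0)/n-\eps)np$, the statement is equivalent to showing that w.h.p.\ the largest independent set in $H[[n]_p]$ has size at most $(\mu(n,A,0)/n + \eps/2)np$, so that any surviving set of size at least $(\mu(n,A,0)/n + \eps)np$ necessarily contains a solution.

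The first step is to apply the Balogh--Morris--Samotij / Saxton--Thomason hypergraph container theorem to $H$. The required co-degree estimates follow from the standard calculation, underlying Schacht's extremal theorem in random sets, that for each $W \subseteq [k]$ the number of solutions to $Ax=0$ with the coordinates indexed by $\overline{W}$ prescribed is $O(n^{|W|-1-\ell+\rank(A_{\overline{W}})})$; these are precisely the bounds that make the parameter $m(A)=\max_W (|W|-1)/(|W|-1+\rank(A_{\overline{W}})-\ell)$ appear. Iterating the container step, together with the supersaturation lemma (Lemma~4.1 of~\cite{KRSS}) which says that any $S \subseteq [n]$ with $|S| \geq (\mu(n,A,0)/n + \eps/2)n$ contains $\Omega(n^{k-\ell})$ solutions, produces a family $\mathcal{C}$ of containers such that (a) every independent set of $H$ lies in some $C \in \mathcal{C}$, (b) $|C| \leq (\mu(n,A,0)/n + \eps/4)n$ for every $C \in \mathcal{C}$, and (c) $\log |\mathcal{C}| \leq \eps'' n \cdot n^{-1/m(A)}$ for any prescribed $\eps''>0$, provided $n$ is sufficiently large.

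The final step is a routine union bound. Fix $C \in \mathcal{C}$; then $|C \cap [n]_p|$ is binomially distributed with mean at most $(\mu(n,A,0)/n + \eps/4)np$, so a Chernoff bound gives
\[
\prob\bigl(|C \cap [n]_p| \geq (\mu(n,A,0)/n + \eps/2)np\bigr) \leq \exp(-c\eps^2 np).
\]
Summing over $\mathcal{C}$, the failure probability is at most $\exp(\eps'' n \cdot n^{-1/m(A)} - c\eps^2 np)$, which tends to $0$ once $p \geq C n^{-1/m(A)}$ with $C = C(\eps)$ large enough (choosing $\eps''$ small compared to $c\eps^2 C$). Hence w.h.p.\ every independent set of $H[[n]_p]$ has size at most $(\mu(n,A,0)/n + \eps/2)np$, which, combined with the upper bound on $|R|$, forces $[n]_p \setminus R$ to contain a $k$-distinct solution to $Ax=0$.

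The main obstacle is ensuring that the containers are genuinely tight, i.e.\ that $|C|$ is only a $(1+o_\eps(1))$ factor above the extremal bound $\mu(n,A,0)$, rather than some weaker bound; this is where the supersaturation input is essential and where property $(*)$ is crucially used (through the co-degree bounds that drive the container iteration). Once the container theorem delivers containers of this quality, the probabilistic part of the argument is standard.
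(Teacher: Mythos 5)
Your overall strategy (containers plus supersaturation plus a union bound) is the standard route to this statement -- the paper itself does not prove Theorem~\ref{resilientrado} (it is quoted from~\cite{HST,Spie}), but the closest in-paper argument, the proof of Theorem~\ref{main}(i), runs along exactly these lines. However, there is a genuine gap at your step (c) and in the union bound that follows. The container theorems of Balogh--Morris--Samotij and Saxton--Thomason bound the number of containers by the number of fingerprints, i.e.\ by $\sum_{s \leq Dn^{1-1/m(A)}}\binom{n}{s}$, which gives $\log|\mathcal{C}| = \Theta\bigl(n^{1-1/m(A)}\log n\bigr)$, not $\eps'' n^{1-1/m(A)}$; there is no version of the container theorem that removes this logarithmic factor from the count of containers. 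With the correct bound, your failure probability is $\exp\bigl(O(n^{1-1/m(A)}\log n) - c\eps^2 np\bigr)$, which does \emph{not} tend to $0$ when $p = Cn^{-1/m(A)}$ for a constant $C$; your argument as written only yields the theorem for $p \gg n^{-1/m(A)}\log n$, i.e.\ it misses the threshold by a log factor.

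The missing idea is to avoid the union bound over \emph{all} containers and instead only account for containers whose fingerprint actually appears in the random set: each container $f(S)$ is relevant only on the event $S \subseteq [n]_p$, which has probability $p^{|S|}$. Since $|S| \leq Dn^{1-1/m(A)} \leq Dnp/C$, one gets
$$\sum_{S} \prob[S \subseteq [n]_p] \;\leq\; \sum_{s \leq Dnp/C}\binom{n}{s}p^{s} \;\leq\; (Dnp/C+1)\Bigl(\tfrac{Ce}{D}\Bigr)^{Dnp/C} \;\leq\; e^{\delta np}$$
once $C$ is chosen large relative to $D$ and $\delta$, and this $e^{\delta np}$ is small enough to beat the Chernoff term $e^{-c\eps^2 np}$. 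One also needs the observation that, because $S \subseteq f(S)$, the event $S \subseteq [n]_p$ is independent of the binomial concentration event for $([n]\setminus f(S)) \cap [n]_p$, so the two probabilities multiply. This weighted-fingerprint computation (rather than a raw count of containers) is precisely how the paper closes the analogous argument in the proof of Theorem~\ref{main}(i), and with it your outline becomes a correct proof at the stated threshold $p > Cn^{-1/m(A)}$.
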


Note that Theorem~\ref{resilientrado} was already proven for \emph{density regular} matrices and the matrix $\begin{pmatrix} 1 & 1 & -1 \end{pmatrix}$ by Schacht~\cite{Sch}. Further, although not explicity stated in the paper, Schacht's method extends to the class of matrices stated in Theorem~\ref{resilientrado}. 

Theorem~\ref{randomrado} implies that by again using strategy-stealing, we could obtain a proof for the non-resilient version of Theorem~\ref{main}(i) for irredundant partition regular matrices. However our method as already noted achieves the best resilience possible, and further it extends to all irredundant matrices which satisfy $(*)$ (even those for which there exists a $2$-colouring of $\mathbb{N}$ with no monochromatic $k$-distinct solutions to $Ax=b$). Our proof also gives an explicit strategy. 

The proof of Theorem~\ref{main}(i) closely follows the method of Theorem 16 in~\cite{NSS}. Here, Nenadov, Steger and Stojakovi\'c consider a similar problem: the \emph{$H$-game} is where the board is the edges of a complete graph, and the winning sets are sets of edges which correspond to a copy of a fixed subgraph $H$. This game and its related Ramsey problems resemble the $(A,b)$-game as follows: Set $d_2(H):=0$ if $e(H)=0$, $d_2(H):=1/2$ if $e(H)=1$, and $d_2(H):=(e(H)-1)/(v(H)-2)$ otherwise. Then define the \emph{$2$-density} of $H$ to be $m_2(H):=\max_{H' \subseteq H} d_2(H')$. For most graphs $H$, the graph analogues of Theorems~\ref{randomrado} and~\ref{resilientrado} (the random Ramsey theorem and resilient subgraphs theorem, see~\cite{RR1,RR2,RR3,CG,Sch}) have a threshold of $\Theta(n^{-1/m_2(H)})$. Bednarska and \L uczak~\cite{BL} showed that the threshold bias for the $H$-game is $\Theta(n^{1/m_2(H)})$. Thus both the $H$-games and $(A,b)$-games (in most cases) have a threshold bias which is the inverse of the threshold for the random (respective) Ramsey/Rado theorem and the resilience theorems. Kusch, Ru\'e, Spiegel and Szab\'o~\cite{KRSS} in fact show that there is an intimate link between resilience and the threshold bias, which explains the parameters of $m(A)$ and $m_2(H)$ appearing for both. They refer to this phenomenon as the \emph{probabilistic Tur\'an intuition} for biased Maker-Breaker games; see Section 6.4 of~\cite{KRSS} for more details. 

An analogous definition of strictly balanced exists for graphs. In~\cite{NSS}, Nenadov, Steger and Stojakovi\'c show that the threshold probability for the random $H$-game is $\Theta(n^{-1/m_2(H)})$ when $H$ is strictly balanced (Theorem 2 in~\cite{NSS}). However there are a class of graphs which have a threshold probability different to that of the random Ramsey/resilient subgraph theorem and the inverse of the threshold bias (Theorem 4 in~\cite{NSS}). Indeed, this is one of the main motivations for studying the random $(A,b)$-game: For the proof of Theorem~\ref{main}(ii), we build upon the method used by R\"odl and Ruci\'nski~\cite{RR4} to prove the $0$-statement of Theorem~\ref{randomrado}. Although our Breaker win statement is `incomplete', its proof does seem to indicate that the threshold probability for the random $(A,b)$-game (for any pair $(A,b)$ which is irredundant and $A$ irredundant and satisfying $(*)$) should be the same as the random Rado threshold. That is, we hope that there is no need for the assumption that $\ell$ divides $k-1$ in Theorem~\ref{main}(ii). Also note that if we could prove our Breaker win statement for all strictly balanced matrices, then the full result would follow (see Proposition~\ref{strictred} and the paragraph following it). So interestingly in this sense, the random $(A,b)$-game does not resemble the random $H$-game.

We prove the two parts of Theorem~\ref{main} in Sections~\ref{secmaker} and~\ref{secbreaker} respectively, before finishing by proving Theorem~\ref{singlemain}(ii)--(iv) along with making some further remarks in Section~\ref{secrem}.

\section{Proof of Maker's win in Theorem~\ref{main}}\label{secmaker}
First we list a few results which are required for the proof. We will use the following simplification of Theorem 4.7 in \cite{HST}, a container result, which is itself a consequence of the general container theorems of Balogh, Morris and Samotij~\cite{BMS} and Saxton and Thomason~\cite{ST}.
\begin{thm}[\cite{HST}]\label{matcontainer}
Let $0 < \delta < 1$. Let $A$ be a fixed integer-valued matrix of dimension $\ell \times k$ and $b$ a fixed integer-valued vector of dimension $\ell$. Suppose the pair $(A,b)$ is irredundant and $A$ is irredundant and satisfies $(*)$. Let $\mathcal I(n,A,b)$ denote all sets from $\mathcal P([n])$ which contain no $k$-distinct solutions to $Ax=b$. Then there exists $D>0$ such that the following holds. For all $n \in \mathbb{N}$, there is a collection $\mathcal S \subseteq \mathcal P([n])$ and a function $f:\mathcal S \rightarrow \mathcal P([n]) $ such that:

\begin{itemize}
\item[(i)]{For all $I \in \mathcal I(n,A,b)$, there exists $S \in \mathcal S$ such that $S \subseteq I \subseteq f(S)$.}
\end{itemize}
Additionally, every $S \in \mathcal S$ satisfies
\begin{itemize}
\item[(ii)] $|S| \leq Dn^{\frac{m(A)-1}{m(A)}}$;
\item[(iii)] $S \in \mathcal I(n,A,b)$;
\item[(iv)] $f(S)$ contains at most $\delta n^{k-\ell}$ $k$-distinct solutions to $Ax=b$; and
\item[(v)] $|f(S)| \leq \mu(n,A,b)+\delta n$.
\end{itemize}
\end{thm}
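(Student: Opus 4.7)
My plan is to derive this as a direct instantiation of one of the standard hypergraph container theorems (Balogh--Morris--Samotij~\cite{BMS} or Saxton--Thomason~\cite{ST}) applied to the $k$-uniform hypergraph $H = H(n,A,b)$ on vertex set $[n]$ whose edges are the $k$-distinct solutions to $Ax = b$ in $[n]$. Two inputs will be needed: co-degree bounds for $H$ calibrated by $m(A)$, and the supersaturation statement (Lemma 4.1 of~\cite{KRSS}, already mentioned in the excerpt) that any subset of $[n]$ of size at least $\mu(n,A,b) + \delta n$ contains $\Omega(n^{k - \ell})$ $k$-distinct solutions to $Ax = b$.

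The first step is to bound, for each $t \in [k-1]$ and each $T \subseteq [n]$ of size $t$, the number of edges of $H$ containing $T$. Fixing such a $T$, a choice of $W \subseteq [k]$ of size $t$, and a placement of the elements of $T$ into the coordinates indexed by $W$, the remaining $k - t$ coordinates $x_{\overline{W}}$ are forced to lie in the affine set $\{x_{\overline{W}} : A_{\overline{W}} x_{\overline{W}} = b - A_W x_W\}$, contributing at most $O(n^{k - t - \rank(A_{\overline{W}})})$ extensions. Property $(*)$ together with irredundancy of $A$ and $(A,b)$ guarantees that the denominator $|W| - 1 + \rank(A_{\overline{W}}) - \rank(A)$ in the definition of $m(A)$ is strictly positive for every $W$ with $|W| \geq 2$, so $m(A)$ is well-defined and finite, and, once these co-degree counts are normalised against the total number $\Theta(n^{k - \ell})$ of edges of $H$, they deliver precisely the balanced-supersaturation hypothesis with parameter $\tau = n^{-1/m(A)}$ required by the container theorems.

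Plugging these co-degree bounds into the container machinery yields a family $\mathcal{S}_0 \subseteq \mathcal{P}([n])$ of fingerprints together with a map $f_0 : \mathcal{S}_0 \to \mathcal{P}([n])$ such that every $I \in \mathcal{I}(n,A,b)$ satisfies $S \subseteq I \subseteq f_0(S)$ for some $S \in \mathcal{S}_0$, with $|S| \leq D n^{(m(A) - 1)/m(A)}$ and with each $f_0(S)$ containing at most $\delta n^{k - \ell}$ $k$-distinct solutions. Conclusion (iii) is then automatic after discarding any $S \in \mathcal{S}_0$ which is not itself in $\mathcal{I}(n,A,b)$, since such an $S$ cannot arise as a fingerprint for any independent $I$. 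Finally, for clause (v) I would apply the supersaturation lemma contrapositively: if $|f(S)| > \mu(n,A,b) + \delta n$ then $f(S)$ would contain at least $c(\delta) n^{k - \ell}$ solutions, contradicting clause (iv) provided the $\delta$ used inside the container step is chosen sufficiently small relative to $c(\delta)$. The main obstacle is not any individual ingredient but the bookkeeping required to line up the co-degree estimates with the precise quantitative hypotheses of the container theorem; this is exactly the content of Theorem~4.7 in~\cite{HST}, of which the present statement is essentially a streamlined repackaging.
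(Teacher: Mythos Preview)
Your proposal is correct and is precisely the route taken in~\cite{HST}: apply the Balogh--Morris--Samotij/Saxton--Thomason container machinery to the $k$-uniform hypergraph of $k$-distinct solutions, verify the co-degree hypotheses via the rank computations encoded in $m(A)$, and deduce (v) from supersaturation. Note, however, that the present paper does not itself prove this statement at all---it is quoted as a black box from~\cite{HST} (with the extension to inhomogeneous $b$ deferred to~\cite{Hanc}), so there is no ``paper's own proof'' to compare against beyond the remark that it is a simplification of Theorem~4.7 there.
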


Note that in~\cite{HST} the theorem is stated for the homogeneous case $Ax=0$ only. However, the result easily generalises to the inhomogeneous case $Ax=b$. Full details can be found in~\cite{Hanc}. 

An upper bound on the size of the largest subset of $[n]$ containing no $k$-distinct solutions to $Ax=b$ is also required. The following is a consequence of Lemma 4.1 in~\cite{KRSS} and Theorem 2 in~\cite{KSV}.

\begin{thm}\label{mubound}
Let $A$ be a fixed integer-valued matrix of dimension $\ell \times k$ and $b$ a fixed integer-valued vector of dimension $\ell$. Given the pair $(A,b)$ is irredundant and $A$ is irredundant and satisfies $(*)$, then there exist $n_0 \in \mathbb{N}$ and $\delta>0$ such that for all integers $n \geq n_0$ we have $\mu(n,A,b) \leq (1-\delta)n$.
\end{thm}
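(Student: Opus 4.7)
The plan is to derive Theorem~\ref{mubound} by combining the two cited results. Lemma 4.1 of~\cite{KRSS} is a supersaturation-type statement; in the setting where $(A,b)$ and $A$ are irredundant and $A$ satisfies $(*)$, it relates the density of a subset $S \subseteq [n]$ to the number of $k$-distinct solutions of $Ax = b$ inside $S$. Theorem 2 of~\cite{KSV}, meanwhile, is a density result that under our hypotheses on $A$ should provide a bound of the form $\mu(n,A,0) \leq (1 - \delta_0)n$ for the homogeneous system $Ax = 0$, i.e.\ it handles the $b=0$ case directly.

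First I would invoke Theorem 2 of~\cite{KSV} to fix a constant $\delta_0 = \delta_0(A) > 0$ and an integer $n_1 = n_1(A)$ such that every subset of $[n]$ of size at least $(1 - \delta_0)n$ contains a $k$-distinct solution to $Ax = 0$ whenever $n \geq n_1$. Next I would apply Lemma 4.1 of~\cite{KRSS}: by irredundancy of $(A,b)$, the number of $k$-distinct solutions to $Ax = b$ in $[n]$ is of order $n^{k-\ell}$, and the supersaturation conclusion of that lemma should then yield that any subset $S \subseteq [n]$ of density exceeding some threshold $1-\delta$ contains at least one (indeed, $\Omega(n^{k-\ell})$) $k$-distinct solutions to $Ax = b$. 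Choosing $\delta \leq \min(\delta_0, \delta_1)$, where $\delta_1$ is the threshold returned by the KRSS lemma, and taking $n_0$ larger than both $n_1$ and the threshold from the KRSS lemma, we obtain $\mu(n,A,b) \leq (1-\delta)n$ for all $n \geq n_0$, which is exactly the conclusion.

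The main obstacle I expect is aligning the hypotheses of \cite[Lemma 4.1]{KRSS} with the current setting: the supersaturation lemma will typically require an input linking the density of a set to the density of solution-free configurations (here coming from the KSV density bound for the homogeneous problem), and one must verify that the passage from $Ax=0$ to $Ax=b$ is uniform in $b$ under irredundancy of the pair. Once this link is established, the remainder of the argument is essentially a quantitative bookkeeping step, with no extra content beyond what the two cited results already provide.
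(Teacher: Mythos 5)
Your proposal correctly identifies that the theorem is meant to be read off the two cited results (the paper offers no further proof), but the roles you assign to them do not cohere into a working argument.

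The chief issue is your characterisation of \cite[Theorem~2]{KSV} as a density result ``providing a bound of the form $\mu(n,A,0)\le(1-\delta_0)n$.'' That reference is a removal-lemma paper (over $\mathbb F_p^n$), and its Theorem~2 is a removal lemma for linear systems, not a density threshold. A removal lemma on its own does not supply such a bound; converting it into ``sufficiently dense subsets of $[n]$ contain $k$-distinct solutions to $Ax=b$'' is exactly the content of \cite[Lemma~4.1]{KRSS}, which is stated directly for $Ax=b$ (not just $b=0$) and uses the KSV removal lemma as an internal tool. That lemma, read in the form ``there exist $\delta,\xi>0$ and $n_0$ such that for all $n\ge n_0$, every $S\subseteq[n]$ with $|S|\ge(1-\delta)n$ contains at least $\xi n^{k-\ell}$ $k$-distinct solutions to $Ax=b$,'' yields Theorem~\ref{mubound} immediately. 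So the two citations are not two parallel density bounds to be reconciled by a minimum; one is an upstream dependency of the other. Relatedly, your ``choose $\delta\le\min(\delta_0,\delta_1)$'' step does no work: if your step~2 already produces a threshold $\delta_1$ that forces solutions to $Ax=b$, that alone is the theorem and $\delta_0$ is superfluous; if it does not, your write-up never actually resolves the ``passage from $Ax=0$ to $Ax=b$'' that you correctly flag as a potential obstacle (and which is genuinely nontrivial, since for $(*)$-matrices that are not partition regular one cannot fall back on a Rado/averaging argument). The fix is to apply \cite[Lemma~4.1]{KRSS} directly, with \cite[Theorem~2]{KSV} playing only an auxiliary role inside the proof of that lemma.
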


We will need the Markov and Chernoff inequalities.

\begin{prop}\label{Markov} Let X be a non-negative random variable. Then for all $t>0$ we have $\mathbb P [X \geq t] \leq \frac{\mathbb E [X]}{t}$.
\end{prop}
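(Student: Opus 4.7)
The plan is to follow the standard textbook proof via an indicator function bound, since Markov's inequality is a classical elementary result and no clever strategy is needed.

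First I would introduce the indicator random variable $\mathbf{1}_{\{X \geq t\}}$, which takes the value $1$ when $X \geq t$ and $0$ otherwise. The central observation is the pointwise inequality
\begin{equation*}
X \;\geq\; t \cdot \mathbf{1}_{\{X \geq t\}},
\end{equation*}
which holds for every outcome: on the event $\{X \geq t\}$ the right-hand side equals $t$ and the left-hand side is at least $t$, while on the complementary event $\{X < t\}$ the right-hand side is $0$ and the left-hand side is non-negative by hypothesis. The non-negativity assumption on $X$ is essential precisely for handling this second case.

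Next I would take expectations of both sides, using monotonicity of expectation together with linearity. Since $t$ is a positive constant, this yields $\mathbb{E}[X] \geq t \cdot \mathbb{E}[\mathbf{1}_{\{X \geq t\}}] = t \cdot \mathbb{P}[X \geq t]$. Dividing through by $t > 0$ gives the stated inequality. There is no real obstacle here; the only subtlety worth flagging is ensuring the argument goes through for general (possibly unbounded) non-negative random variables, which is immediate from the monotone convergence theorem applied to the truncations $X \wedge N$, but for the applications in this paper (indicator sums and counts of solutions) $X$ is a non-negative integer-valued random variable with finite expectation and no such technicality arises.
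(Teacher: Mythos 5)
Your proof is correct: the pointwise bound $X \geq t\cdot\mathbf{1}_{\{X\geq t\}}$ followed by monotonicity of expectation is the canonical argument for Markov's inequality, and your handling of the non-negativity hypothesis and of possibly infinite expectation is fine. The paper states this proposition as a classical fact without giving any proof, so there is nothing to compare against; your argument is exactly the standard one that would be supplied.
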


\begin{prop}\label{Chernoff} Let $X_1, \dots, X_n$ be independent Bernoulli distributed random variables with $\mathbb P [X_i=1]=p$ and $\mathbb P [X_i=0]=1-p$. Then for $X= \sum_{i=1}^n X_i$ and every $0 < \delta \leq 1$, we have $$\mathbb P [X \leq (1-\delta) \mathbb E [X]] \leq e^{-\mathbb E [X] \delta^2 /2}.$$
\end{prop}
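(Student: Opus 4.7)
The plan is to use the standard exponential moment (Chernoff) method. First I would rewrite the lower-tail event in exponential form: for any parameter $t>0$, the event $X\le(1-\delta)\mathbb{E}[X]$ is the same as $e^{-tX}\ge e^{-t(1-\delta)\mathbb{E}[X]}$, so that Proposition~\ref{Markov} applied to the non-negative variable $e^{-tX}$ yields
\[
\mathbb{P}\!\left[X\le(1-\delta)\mathbb{E}[X]\right]\le e^{t(1-\delta)\mathbb{E}[X]}\,\mathbb{E}\!\left[e^{-tX}\right].
\]
Using the independence of the $X_i$'s, the moment generating factor splits as $\mathbb{E}[e^{-tX}]=\prod_{i=1}^{n}\mathbb{E}[e^{-tX_i}]=\prod_{i=1}^{n}\bigl(1+p(e^{-t}-1)\bigr)$, and since $1+x\le e^x$ for all real $x$, this is at most $e^{np(e^{-t}-1)}=e^{\mathbb{E}[X](e^{-t}-1)}$.

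Next I would optimise the free parameter $t$. Writing $\mu:=\mathbb{E}[X]$, the bound becomes
\[
\mathbb{P}\!\left[X\le(1-\delta)\mu\right]\le\exp\!\bigl(\mu(e^{-t}-1)+t(1-\delta)\mu\bigr),
\]
whose exponent is minimised at $t=-\ln(1-\delta)$ (which is positive since $0<\delta\le1$). Plugging this choice in gives the sharp form
\[
\mathbb{P}\!\left[X\le(1-\delta)\mu\right]\le\exp\!\bigl(-\mu[\delta+(1-\delta)\ln(1-\delta)]\bigr).
\]

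To finish, I would verify the elementary inequality $\delta+(1-\delta)\ln(1-\delta)\ge\delta^{2}/2$ on $(0,1]$. Setting $g(\delta):=\delta+(1-\delta)\ln(1-\delta)-\delta^{2}/2$, one computes $g(0)=0$, $g'(\delta)=-\ln(1-\delta)-\delta$, $g'(0)=0$, and $g''(\delta)=\delta/(1-\delta)\ge0$ on $[0,1)$; hence $g'\ge0$ and so $g\ge0$ throughout $[0,1)$. The inequality extends to $\delta=1$ by continuity (the left-hand side tends to $1$). Combining this with the previous display yields the claimed bound $\mathbb{P}[X\le(1-\delta)\mu]\le e^{-\mu\delta^{2}/2}$.

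The proof is entirely routine; the only mildly delicate point is the final elementary inequality, which is the reason the simpler Gaussian-looking bound $e^{-\mu\delta^{2}/2}$ is weaker than the optimal exponent produced by the Chernoff optimisation. Everything else reduces to Markov's inequality, independence, and $1+x\le e^{x}$.
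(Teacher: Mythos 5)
Your proof is correct: it is the standard exponential-moment (Chernoff) derivation, applying Markov's inequality to $e^{-tX}$, using independence and $1+x\le e^x$, optimising at $t=-\ln(1-\delta)$, and finishing with the elementary inequality $\delta+(1-\delta)\ln(1-\delta)\ge\delta^2/2$ (your convexity check of $g$ is fine, and the $\delta=1$ edge case is handled adequately). The paper itself states this proposition as a standard tool without proof, so there is nothing to compare against; your argument is exactly the textbook one that would be cited.
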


Finally, we require the Erd\H{o}s-Selfridge Criterion, commonly used to prove a Breaker strategy result, which we mentioned in the introduction. (Note that we do mean Breaker here; in our proof, we create an auxiliary game where the original Maker needs to play the role of Breaker!)

\begin{thm}[\cite{ES}]\label{ESCri} Let $X$ be a set and let $\mathcal{F}$ be a family of subsets of $X$. Then if Breaker has the first move in the game, and $$ \sum_{A \in \mathcal{F}} 2^{-|A|} <1,$$ then Breaker has a winning strategy for the Maker-Breaker game $(X,\mathcal{F})$.
\end{thm}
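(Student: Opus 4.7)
The plan is the classical potential-function argument due to Erd\H{o}s and Selfridge. For each winning set $A \in \mathcal{F}$, define its \emph{weight} at a given moment of the game by
\[
w(A) \;:=\;
\begin{cases}
2^{-|A \setminus M|} & \text{if Breaker has not yet claimed any element of }A,\\
0 & \text{otherwise,}
\end{cases}
\]
where $M$ is Maker's current set of claimed elements, and set $\Phi := \sum_{A \in \mathcal{F}} w(A)$. Two properties of $\Phi$ make it the right quantity to control: initially $\Phi = \sum_{A \in \mathcal{F}} 2^{-|A|} < 1$ by the hypothesis of the theorem, and if Maker ever claims all of some $A$ then, since Maker's and Breaker's sets are disjoint, $A$ is still alive, contributes $w(A) = 2^0 = 1$, and forces $\Phi \geq 1$. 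Hence it suffices to exhibit a Breaker strategy that keeps $\Phi < 1$ throughout.

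Breaker's strategy will be the obvious greedy one: on his turn, claim the currently unclaimed element $x$ maximising
\[
d(x) \;:=\; \sum_{\substack{A \in \mathcal{F}\ \text{alive}\\ x \in A}} w(A).
\]
I would then analyse $\Phi$ across each Breaker--Maker pair of consecutive moves. When Breaker claims $x$, every alive $A \ni x$ dies, so $\Phi$ drops by exactly $d(x)$. When Maker then claims some $y$, every alive $A \ni y$ has $|A \setminus M|$ decrease by one and therefore doubles its weight, so $\Phi$ rises by the analogous sum $d'(y)$ of weights evaluated \emph{after} Breaker's move. By Breaker's greedy choice, $d(x) \geq d(y)$ (comparing both at the moment before Breaker's move), and $d(y) \geq d'(y)$ because any set through both $x$ and $y$ died during Breaker's move and so no longer contributes to $d'(y)$. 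Thus each Breaker--Maker pair yields a net non-increase of $\Phi$, and since Breaker moves first with $\Phi<1$ to start, an easy induction on the number of pairs of moves played shows $\Phi < 1$ throughout.

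Combined with the fact that Maker completing any $A \in \mathcal{F}$ would force $\Phi \geq 1$, this rules out Maker ever winning, so Breaker wins. The one subtle step is keeping track of the three-way comparison $d(x) \geq d(y) \geq d'(y)$: the two $d$ values on the left are to be evaluated at the same moment (immediately before Breaker's move), while $d'(y)$ refers to the updated state immediately after that move, and it is precisely the death of sets containing $x$ that provides the slack making the middle inequality valid. Once this bookkeeping is in place the argument is entirely elementary and requires nothing beyond the two defining properties of $\Phi$.
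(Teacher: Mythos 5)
Your argument is correct: it is precisely the classical Erd\H{o}s--Selfridge potential-function proof, with the bookkeeping for the Breaker-first version (per round the decrease $d(x)$ dominates the subsequent increase $d'(y)\le d(y)\le d(x)$, so $\Phi<1$ is maintained after every Maker move, while a completed Maker set would force $\Phi\ge 1$). The paper does not prove this statement itself but simply cites~\cite{ES}, and your proof coincides with the standard argument from that source, so there is nothing further to reconcile.
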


\begin{proof}[Proof of Theorem~\ref{main}(i)]
Apply Theorem~\ref{mubound} with parameters $A,b$ to obtain $\eps'>0$ such that $\mu(n,A,b) \leq (1-\eps')n$ for sufficiently large $n$. Let $\eps>0$ noting that without loss of generality we can assume $\eps \ll \eps'$. Suppose $n$ is sufficiently large. Apply Theorem~\ref{matcontainer} with parameters $\eps/4,A,b$ to obtain $D>0$, a collection $\mathcal{S} \subseteq \mathcal{P}([n])$ and a function $f$ satisfying Theorem~\ref{matcontainer}(i)--(v). Fix $\delta \ll \eps$ and choose $C$ such that $0 \leq 1/C \ll 1/D,\delta,\eps$. Let $p>Cn^{-1/m(A)}$. Note that $m(A)>1$ (see Proposition 4.3 in~\cite{HST}) and thus $pn$ tends to infinity as $n$ tends to infinity. Let $R$ be as in the statement of the theorem and set $X:=[n]_p \setminus R$.

Maker's aim is to claim a $k$-distinct solution to $Ax=b$ within $X$, and Breaker's aim is to prevent this. If Maker loses, then her set $M \subseteq X$ does not contain a $k$-distinct solution to $Ax=b$. Hence $M \in \mathcal{I}(n,A,b)$ and so there exists $S \in \mathcal{S}$ such that $S \subseteq M \subseteq f(S)$ and $S \subseteq X$. Given $S \in \mathcal{S}$ note that if Maker claims one element from $X \setminus f(S)$ then $M \not \subseteq f(S)$; hence consider the auxiliary game $(X,\mathcal{F})$ where $$\mathcal{F}:=\{ X \setminus f(S) : S \in \mathcal{S} \text{ and } S \subseteq X\}.$$ Maker can ensure that she wins the $(A,b)$-game on $X$ by picking at least one element from each set in $\mathcal{F}$, that is, she wins the auxiliary game as Breaker. We now make the following claim about the auxiliary game. 

\begin{claim}\label{makertac}
\begin{itemize}
\item[(i)]{For all $S \in \mathcal{S}$ such that $S \subseteq X$, we have $|X \setminus f(S)| \geq \eps np/2$ w.h.p.}
\item[(ii)]{We have $|\mathcal{F}| \leq 2^{\eps np/4}$ w.h.p.}
\end{itemize}
\end{claim}

Assuming the claim holds, it now easily follows that 
$$ \sum_{A \in \mathcal{F}} 2^{-|A|} \leq  2^{\eps np/4} \cdot 2^{-\eps np/2} < 1,$$
that is, the hypothesis of Theorem~\ref{ESCri} holds for the game $(X,\mathcal{F})$. Thus Maker wins the game as Breaker in the auxiliary game, and thus wins the original game (the $(A,b)$-game on $X$). Since this happens w.h.p., it remains to prove the claim.

\begin{proof}[Proof of Claim~\ref{makertac}]
First we shall count $|\mathcal{F}|$. We wish to count the number of $S \in \mathcal{S}$ such that $S \subseteq X$. Recall that every $S \in \mathcal{S}$ satisfies $|S| \leq Dn^{1-1/m(A)} \leq Dpn/C$ and there are at most $\binom{n}{s}$ sets $S \in \mathcal{S}$ of size $s$. Thus we have
\begin{align}\label{makerc2}
\mathbb{E} [|\mathcal{F}|] \leq & \sum_{S \in \mathcal{S}} \mathbb P [S \subseteq [n]_p] \leq \sum_{S \in \mathcal{S}} p^{|S|} \leq \sum_{s=0}^{Dpn/C} \binom{n}{s} p^s \leq  (Dpn/C+1) \binom{n}{Dpn/C} p^{Dpn/C} \\  \nonumber
\leq & \, (Dpn/C+1) \left ( \frac{Ce}{D} \right )^{Dpn/C} \leq e^{\delta np} \leq 2^{\eps np/8},
\end{align}
where the last two inequalities follows by our choice of $C$ and since $\delta \ll \eps$ respectively. Thus by Proposition~\ref{Markov} we have $$\mathbb P [|\mathcal{F}| \geq 2^{\eps np/4}] \leq 2^{-\eps np/8},$$ which tends to zero as $n$ tends to infinity, proving (ii).

Now for $(i)$, observe that if we show that the probability that there exists $S \in \mathcal{S}$ such that $S \subseteq X$ and $|X \setminus f(S)| \leq \eps np/2$ tends to zero as $n$ tends to infinity, we will be done. First observe by Theorem~\ref{matcontainer} that for all $S \in \mathcal{S}$ we have $|f(S)| \leq \mu(n,A,b)+\eps n/4$ and so $|[n] \setminus f(S)| \geq n-\mu(n,A,b)-\eps n/4$. Let $\gamma:=\eps/(4-4\mu(n,A,b)/n-\eps)$ and $Y:=[n]_p \setminus f(S)$ (noting $\gamma>0$ since $\eps \ll \eps'$). By Proposition~\ref{Chernoff} we have  
\begin{align*}
& \, \mathbb P \left [ |([n] \setminus f(S)) \cap [n]_p| < \left (1-\frac{\mu(n,A,b)}{n}-\frac{\eps}{2} \right) np \right] \leq \mathbb P \left [ |Y| < (1- \gamma) \mathbb E [ |Y| ]  \right] \\
\leq & \, e^{-\mathbb E [|Y|] \gamma^2/2} \leq e^{-2 \delta np}, 
\end{align*}
where the last inequality follows since $\delta \ll \eps \ll \eps'$. Note that since $|R| \leq ( 1-\frac{\mu(n,A,b)}{n}-\eps)np$ and $X \setminus f(S) = Y \setminus R$ we have 
\begin{align}\label{makerc1}
\mathbb P [ |X \setminus f(S)| < \eps np/2] \leq e^{-2 \delta np},
\end{align}
for all $S \in \mathcal{S}$. Also since $S \subseteq f(S)$, the events $S \subseteq [n]_p$ and $|X \setminus f(S)|$ being small are independent. Thus
\begin{align*}
& \mathbb P [ \text{There exists $S \in \mathcal{S}$ such that $S \subseteq [n]_p$ and $|X \setminus f(S)|< \eps np/2$}] \\ 
\leq & \sum_{S \in \mathcal{S}} \mathbb P [S \subseteq [n]_p \text{ and } |X \setminus f(S)|< \eps np/2] \\
\leq & \sum_{S \in \mathcal{S}} \left( \mathbb P [S \subseteq [n]_p] \cdot \mathbb P[|X \setminus f(S)|< \eps np/2] \right) \stackrel{(\ref{makerc1})}{\leq} e^{-2 \delta np} \sum_{S \in \mathcal{S}} \mathbb P [S \subseteq [n]_p] \\
\stackrel{(\ref{makerc2})}{\leq} & \, e^{-2 \delta np} \cdot e^{\delta np} = e^{- \delta np},
\end{align*}
which tends to zero as $n$ tends to infinity, as required.
\end{proof}
\end{proof}

\section{Proof of Breaker's win in Theorem~\ref{main}}\label{secbreaker}

The proof will follow a similar tactic to that used by R\"odl and Ruci\'nski~\cite{RR4} for their proof of the $0$-statement of Theorem~\ref{randomrado}. Recall that the goal of R\"odl and Ruci\'nski was to show that, given an irredundant partition regular matrix $A$, an integer $r \geq 2$, and an upper bound on the probability $p$, then w.h.p. there exists an $r$-colouring of $[n]_p$ such that there are no monochromatic $k$-distinct solutions to $Ax=0$. The proof consisted of three parts:

\begin{enumerate}[label=(P\arabic*)]
\item{{\bf A reduction of the problem.} It is shown that it suffices to prove the result for the associated matrix $B(A)$. The problem is then rephrased to one about an \emph{associated hypergraph}.}\label{P1}
\item{{\bf A deterministic lemma.} It is shown that if all $r$-colourings of $[n]_p$ contain a monochromatic $k$-distinct solution to $Ax=0$, then the associated hypergraph must contain a certain connected subhypergraph.}\label{P2}
\item{{\bf A probabilistic lemma.} It is shown that if $p < cn^{-1/m(A)}$, then w.h.p. the associated hypergraph does not contain the subhypergraph given by the deterministic lemma.}\label{P3}
\end{enumerate}

Recall that our aim is to show that under the hypothesis of Theorem~\ref{main}(ii), 
w.h.p. Breaker wins the $(A,b)$-game on $[n]_p$. Our proof consists of the same three general parts, with appropriate amendments to the lemmas. 
\begin{enumerate}[label=(Q\arabic*)]
\item{{\bf A reduction of the problem.} As \ref{P1} above.}\label{Q1}
\item{{\bf Two deterministic lemmas.} These together show that if Maker wins the $(A,b)$-game on $[n]_p$, then the associated hypergraph must contain a certain connected subhypergraph.}\label{Q2}
\item{{\bf A probabilistic lemma.} It is shown that if $B(A)$ is an $\ell \times k$ matrix of full rank $\ell$ which satisfies $\ell$ divides $k-1$, and $p < cn^{-1/m(A)}$, then w.h.p. 
the associated hypergraph does not contain the subhypergraph given by the deterministic lemmas.}\label{Q3}
\end{enumerate}
We will of course make this more rigorous as we get to each part of the proof. \\ 

{\bf \ref{Q1} A reduction of the problem.} 
First we show that in order for Breaker to win the $(A,b)$-game on any set of integers $X$, its suffices to show that Breaker wins the $(B,b')$-game on $X$, for some matrix $B:=B(A)$ and vector $b':=b'(A,b)$. For a vector $x=(x_1,\dots,x_k)$ and a non-empty set $W \subseteq [k]$, let $x_W:=(x_i)_{i \in W}$.

\begin{prop}[\cite{KRSS}, Corollary 4.3 and Lemma 4.2]\label{strictred}
Let $A$ be a fixed integer-valued matrix of dimension $\ell \times k$ and $b$ a fixed integer-valued vector of dimension $\ell$. Suppose the pair $(A,b)$ is irredundant and $A$ is irredundant and satisfies $(*)$. There exists a non-empty set $W \subseteq [k]$, 
a matrix $B$ of full rank which is irredundant, satisfies $(*)$, and is strictly balanced,
and a vector $b'$ for which the pair $(B,b')$ is irredundant,
such that if $Ax=b$, then $Bx_W=b'$.
\end{prop}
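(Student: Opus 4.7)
The plan is to construct $B$ and $b'$ by performing elementary row operations on $A$ relative to a partition $W \dot\cup \overline{W} = [k]$ that realises $m(A)$ and is minimal in size. Set $\ell := \rank(A)$. First I would choose $W \subseteq [k]$ with $|W| \geq 2$ that maximises $\frac{|W|-1}{|W|-1+\rank(A_{\overline{W}})-\ell}$ (so this equals $m(A)$), taking $|W|$ smallest possible among such maximisers. Write $r := \rank(A_{\overline{W}})$; Proposition~4.3 in~\cite{HST} gives $m(A) > 1$, which forces $r < \ell$.

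Since $\rank(A_{\overline{W}}) = r$, I can perform elementary row operations on $A$ to produce $A'$ in which the columns indexed by $\overline{W}$ have zero entries in the top $\ell - r$ rows. Let $b''$ be the correspondingly transformed right-hand side, define $B$ to be the top $\ell - r$ rows of $A'$ restricted to columns $W$, and let $b'$ be the first $\ell - r$ entries of $b''$. Any solution to $Ax = b$ satisfies $A' x = b''$ and hence $Bx_W = b'$, giving the desired solution preservation. The top $\ell - r$ rows of $A'$ must be linearly independent (else $\rank(A') < \ell$), so $B$ has full rank $\ell - r$.

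The remaining properties then fall into line. Irredundance: a $k$-distinct positive integer solution to $Ax = b$ (respectively to the associated homogeneous system) restricts to a $|W|$-distinct solution of $Bx_W = b'$ (respectively of the homogeneous $B$-system). Property $(*)$: any further Gaussian elimination performed on $B$ lifts to row operations on $A'$ confined to the top block, so a reduced row of $B$ with exactly two non-zero rational entries would produce a reduced row of $A$ with exactly two non-zero rational entries, contradicting that $A$ satisfies $(*)$.

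The main obstacle is verifying \emph{strict balance} of $B$. The key identity, which follows from the block-upper-triangular shape of $A'$ (zeros in the top-right block, with the bottom-right $r \times |\overline{W}|$ block of full row rank $r$), is $\rank(A_{\overline{W} \cup (W \setminus W')}) = r + \rank(B_{W \setminus W'})$ for every $W' \subseteq W$. Now suppose, for contradiction, that $B$ fails strict balance: there exists $W' \subseteq W$ with $2 \leq |W'| < |W|$ such that $\frac{|W'|-1}{|W'|-1+\rank(B_{W \setminus W'})-(\ell-r)} \geq \frac{|W|-1}{|W|-1-(\ell-r)}$. Using the identity, the left-hand side equals the value attained at the partition $W' \dot\cup ([k] \setminus W')$ in the definition of $m(A)$, while the right-hand side equals $m(A)$ (by our choice of $W$). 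Hence $W'$ also realises $m(A)$ with $|W'| < |W|$, contradicting the minimality of $|W|$. Applying the same identity at $W$ itself (with $W\setminus W' = \emptyset$) confirms $m(B) = m(A)$. The technical heart is therefore this rank identity combined with the fractional translation of the $B$-optimisation at $W'$ into the $A$-optimisation at $W'$ viewed as a subset of $[k]$.
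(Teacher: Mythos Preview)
The paper does not supply its own proof of this proposition; it is quoted directly from \cite{KRSS} (Corollary~4.3 and Lemma~4.2), with the remark that the homogeneous case is implicit in \cite{RR4}. Your argument reconstructs what is essentially the standard proof behind those citations: pick a minimal-size maximiser $W$ for $m(A)$, row-reduce to isolate the block supported on $W$, and verify the required properties. The key rank identity $\rank(A_{\overline{W}\cup(W\setminus W')}) = r + \rank(B_{W\setminus W'})$ is correct (it follows from the block-triangular shape, since the bottom block on $\overline{W}$ has full row rank $r$), and the translation of the $B$-ratio at $W'$ into the $A$-ratio at $W'\subseteq[k]$ is exactly what drives the strict-balance contradiction via minimality of $|W|$. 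The irredundance and $(*)$ arguments are also sound; in particular, since $A$ satisfies $(*)$ (equivalently is abundant), deleting any two columns preserves rank, so no $W$ with $|W|=2$ can realise $m(A)>1$, and hence $|W|\geq 3$ automatically. Your proposal is correct and matches the intended approach.
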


Note that the homogeneous case for where $b,b'$ are zero vectors is implicitly stated in~\cite{RR4}. We call the pair $(B,b')$ above the \emph{associated pair of $(A,b)$}, and call $B=B(A)$ the \emph{associated matrix of $A$}. The consequence for us of Proposition~\ref{strictred} is that if Maker wins the $(A,b)$-game, then Maker also wins the $(B,b')$-game (since a solution to $Ax=b$ always gives rise to a solution to $Bx'=b'$). Thus in order to prove Breaker wins the $(A,b)$-game, it suffices to prove that Breaker wins the $(B,b')$-game.

With any game $(X,\mathcal{F})$ there exists an associated hypergraph $H(X,\mathcal{F})$ with vertex set $X$ and edge set $\mathcal{F}$. Write $H(X,B,b'):=H(X,\mathcal{F})$ to represent the hypergraph where $X$ is a set of integers, and $\mathcal{F}$ is the set of all $k$-distinct solutions to $Bx'=b'$ (assuming that $B$ is an $\ell \times k$ matrix). Thus we may think of the game as one where Maker and Breaker take turns claiming a vertex of the $k$-uniform hypergraph $H([n]_p,B,b')$ and Maker's aim is to obtain an edge of $H([n]_p,B,b')$, and Breaker's aim is to prevent this. For the remainder of the proof we will assume that we have fixed $A$ and $b$ (and therefore $B$ and $b'$), and set $H:=H([n]_p,B,b')$. We will assume that $B$ is an $\ell \times k$ matrix, and note that since $B$ satisfies property $(*)$, it is easy to see that we must have $k \geq 3$.

{\bf Hypergraph notation.}
We now introduce some notation which will be required for the deterministic and probabilistic lemmas. For a $k$-uniform hypergraph $H$ with edge set $E:=E(H)$ and vertex set $V:=V(H)$, let an \emph{edge order} be an enumeration of the edges $E$.  
For a given edge order of $E$ and edge $e \in E$, call a vertex $v \in e$ \emph{new in $e$} if $v$ did not appear in any edge which came before $e$ in the edge order. Otherwise call $v$ \emph{old in $e$}. 
We call an edge $e$ \emph{good} if it has precisely one old vertex, \emph{bad} if it has between two and $k-1$ old vertices, and \emph{$k$-bad} if it has $k$ old vertices. Note that we always consider edges to be good, bad, or $k$-bad with respect to a given edge order; similarly whether a vertex is new or old in a given edge also depends on the given edge order. So throughout we will make it clear which edge order we are referring to. Note that given an edge order, a vertex will always be new in precisely one edge (and old in every other edge it appears in). For ease of notation we may sometimes identify a hypergraph with an edge order of its edges, e.g. if we have $P:=e_0,\dots,e_t$, then we consider the hypergraph $P$ to have $E(P):=\{e_0,\dots,e_t\}$ and $V(P):=\{x \in e, e \in E(P)\}$. 

Let $e_0,\dots,e_t$ be an edge order. We call the edge order \emph{allowed} if for all $i \in [t]$, $e_i$ is good, bad or $k$-bad (that is, there is no edge $e_i$ with $i \geq 1$ such that $e_i$ is vertex-disjoint from all the edges $e_0, \dots, e_{i-1}$). We call it \emph{valid} if for all $i \in [t]$, $e_i$ is good or bad. It is \emph{simple} if for all $i \in [t]$, $e_i$ is good. 
For a subset of edges $e_{f_1},\dots,e_{f_u}$ of $e_0,\dots,e_t$ we do not assume $f_1 \leq \dots \leq f_u$ unless otherwise stated. 
For two vertex-disjoint sets $X_1, X_2 \subseteq V(H)$, we define a \emph{minimal path from $X_1$ to $X_2$ in $\{e_0,\dots,e_t\}$} to be a subset of edges $e_{f_1}, \dots, e_{f_u}$ of $e_0,\dots,e_t$ such that
\begin{itemize}
\item[(i)]{we have $X_1 \cap e_{f_1} \not = \emptyset$, and $x \notin e_{f_a}$ for any $a \geq 2$ and $x \in X_1$;}
\item[(ii)]{we have $X_2 \cap e_{f_u} \not = \emptyset$, and $x \notin e_{f_a}$ for any $a \leq u-1$ and $x \in X_2$;}
\item[(iii)]{for all $i,j \in [u]$ with $i<j$ we have $|e_{f_i} \cap e_{f_j}| \geq 1$ if $i=j-1$ and $|e_{f_i} \cap e_{f_j}|=0$ otherwise.}
\end{itemize}
As a small example consider the second hypergraph in Figure 1 (with edges labelled):
\begin{itemize}
\item{The edge order $e_1,e_2,\dots,e_8$ is valid, but not simple, since all $e_i$ for $2 \leq i \leq 8$ are good or bad, and in particular $e_2$ is bad.}
\item{The edge order $e_1,e_4,e_3$ is not allowed, since $e_4$ is vertex-disjoint from $e_1$.}
\item{Setting $X_1:=e_1$ and $X_2:=e_5$, we see that $e_3,e_4$ is a minimal path from $X_1$ to $X_2$ in $\{e_2,e_3,e_4\}$, whereas $e_2,e_3,e_4$ is not; condition (i) of a minimal path is violated since there exists a vertex $x \in X_1 \cap e_3$.}
\end{itemize} 
We now give names to a variety of $k$-uniform hypergraphs which will appear in our deterministic and probabilistic lemmas. Suppose that $e_{f_1},\dots,e_{f_u}$ for some $u \in \mathbb{N}$ is a valid edge order, where $|e_{f_i} \cap e_{f_{i+1}}| \geq 1$ for all $i \in [u-1]$. We call $e_{f_1},\dots,e_{f_u}$:
\begin{itemize}
\item{An \emph{overlapping pair}, if $u=2$ and $2 \leq |e_{f_1} \cap e_{f_2}| \leq k-1$;}
\item{A \emph{loose cycle}, if $u \geq 3$, and for all $i,j \in [u]$ with $i<j$ we have
$$|e_{f_i} \cap e_{f_j}| = \begin{cases} 1 \text{ if $i = j-1$, or $i=1$ and $j=u$;} \\ 0 \text { otherwise;} \end{cases}$$}
\item{A \emph{loose path}, if for all $i,j \in [u]$ with $i<j$ we have
$$|e_{f_i} \cap e_{f_j}| = \begin{cases} 1 \text{ if $i = j-1$;} \\ 0 \text { otherwise;} \end{cases}$$   }
\item{A \emph{spoiled cycle}, if $P_1:=e_{f_1},e_{f_2}$ forms an overlapping pair, $P_2:=e_{f_3},\dots,e_{f_u}$ forms a loose path, and $P_1$ and $P_2$ are vertex-disjoint except for two vertices $x \not =y$, where we have $x=(e_{f_2} \setminus e_{f_1} ) \cap (e_{f_3} \setminus e_{f_z})$ (where $z=4$ if $u \geq 4$, and $z=1$ otherwise) and $y=(e_{f_1} \setminus e_{f_2}) \cap (e_{f_u} \setminus e_{f_{u-1}})$;}
\item{A \emph{double loose cycle}, if for some $v \leq u-2$, $P_1:=e_{f_1},\dots,e_{f_v}$ forms a loose cycle, $P_2:=e_{f_{v+1}},\dots,e_{f_u}$ forms a loose path, and $P_1$ and $P_2$ are vertex-disjoint except for two vertices $x \not =y$, where we have $x=(e_{f_{v+1}} \setminus e_{f_{v+2}}) \cap e_{f_v}$ and $y=(e_{f_u} \setminus e_{f_{u-1}}) \cap e_{f_a}$ for some $a \in [v]$;}
\item{A \emph{double overlapping pair}, if $u=4$, $e_{f_1},e_{f_2}$ and $e_{f_3},e_{f_4}$ each form overlapping pairs, which are vertex-disjoint except for two vertices $x \not =y$, where we have $x=(e_{f_1} \setminus e_{f_2}) \cap (e_{f_4} \setminus e_{f_3})$ and $y=(e_{f_2} \setminus e_{f_1}) \cap (e_{f_3} \setminus e_{f_4})$, and $|e_{f_3} \cap e_{f_4}| \leq k-2$;}
\item{An \emph{overlapping pair/loose cycle with handle}, if $e_{f_1},\dots,e_{f_{u-1}}$ forms an overlapping pair/loose cycle and $e_{f_u}$ is bad in the edge order $e_{f_1},\dots,e_{f_u}$;}
\item{An \emph{overlapping pair/loose cycle to overlapping pair/loose cycle}, if for some $w \leq v<u$, $P_1:=e_{f_1},\dots,e_{f_w}$ forms an overlapping pair or loose cycle, $P_2:=e_{f_{w+1}},\dots,e_{f_v}$ forms a loose path and $P_3:=e_{f_{v+1}},\dots,e_{f_u}$ forms an overlapping pair or loose cycle; moreover if $w=v$ then $|V(P_1) \cap V(P_3)|=1$; otherwise $|V(P_1) \cap V(P_2)|=1$, $V(P_1) \cap V(P_3) = \emptyset$, $|V(P_2) \cap V(P_3)|=1$, and additionally if $w\leq v-2$, then $e_{f_{w+2}} \cap V(P_1) = \emptyset$ and 
$e_{f_{v-1}} \cap V(P_3) = \emptyset$.}
\end{itemize}

\begin{figure}[h]
\begin{tikzpicture}
\node[vertex] at (0,2) (1) {};
\node[vertex] at (1,2) (2) {};
\node[vertex] at (2,1.333) (3) {};
\node[vertex] at (2,2.667) (4) {};
\node[vertex] at (3,0.667) (5) {};
\node[vertex] at (3,3.333) (6) {};
\node[vertex] at (4,0) (7) {};
\node[vertex] at (4,4) (8) {};
\node[vertex] at (4,1) (9) {};
\node[vertex] at (4,2) (10) {};
\node[vertex] at (4,3) (11) {};

\filldraw[fill opacity=0,fill=white!70] 
($(1)+(0,0.4)$) to ($(2)+(0,0.4)$) to ($(8) + (-0.2,0.3464)$) to[out=30,in=120] ($(8)+(0.3464,0.2)$) to[out=300,in=30] ($(8)+(0.2,-0.3464)$) to ($(2)+(0.1732,-0.4)$) to ($(1)+(0,-0.4)$) to[out=180,in=270] ($(1)+(-0.4,0)$) to[out=90,in=180] ($(1)+(0,0.4)$); 

\filldraw[fill opacity=0,fill=white!70] 
($(1)+(0,-0.5)$) to ($(2)+(0,-0.5)$) to ($(7) + (-0.25,-0.433)$) to[out=330,in=240] ($(7)+(0.433,-0.25)$) to[out=60,in=330] ($(7)+(0.25,0.433)$) to ($(2)+(0.2065,0.5)$) to ($(1)+(0,0.5)$) to[out=180,in=90] ($(1)+(-0.5,0)$) to[out=270,in=180] ($(1)+(0,-0.5)$); 

\filldraw[fill opacity=0,fill=white!70] 
($(8) + (0,0.3)$) to[out=0,in=90] ($(8) + (0.3,0)$) to ($(7) + (0.3,0)$) to[out=270,in=0] ($(7) + (0,-0.3)$) to[out=180,in=270] ($(7) + (-0.3,0)$) to ($(8) + (-0.3,0)$) to[out=90,in=180] ($(8) + (0,0.3)$);

\node at (2,-1.0) {An overlapping pair with handle};
\node at (2,-1.5) {Also a spoiled cycle};
\end{tikzpicture}
\qquad
\begin{tikzpicture}[scale=1.0]
\node[vertex] at (0,0) (1) {};
\node[vertex] at (0,1) (2) {};
\node[vertex] at (0,2) (3) {};
\node[vertex] at (0,3) (4) {};
\node[vertex] at (1,1) (5) {};
\node[vertex] at (2,1) (6) {};
\node[vertex] at (3,1) (7) {};
\node[vertex] at (4,1) (8) {};
\node[vertex] at (4,0) (9) {};
\node[vertex] at (4,2) (10) {};
\node[vertex] at (5,2) (11) {};
\node[vertex] at (5,0) (12) {};
\node[vertex] at (6,1) (13) {};
\node[vertex] at (6,0) (14) {};
\node[vertex] at (6,2) (15) {};
\node at (-0.7,3) {$e_1$};
\node at (-0.7,0) {$e_2$};
\node at (1.2,0.3) {$e_3$};
\node at (2.8,0.3) {$e_4$};
\node at (3.3,2) {$e_5$};
\node at (5,2.7) {$e_6$};
\node at (6.7,1) {$e_7$};
\node at (5,-0.7) {$e_8$};

\filldraw[fill opacity=0,fill=white!70] 
($(3) + (0,0.3)$) to[out=0,in=90] ($(3) + (0.3,0)$) to ($(1) + (0.3,0)$) to[out=270,in=0] ($(1) + (0,-0.3)$) to[out=180,in=270] ($(1) + (-0.3,0)$) to ($(3) + (-0.3,0)$) to[out=90,in=180] ($(3) + (0,0.3)$);
\filldraw[fill opacity=0,fill=white!70] 
($(4) + (0,0.4)$) to[out=0,in=90] ($(4) + (0.4,0)$) to ($(2) + (0.4,0)$) to[out=270,in=0] ($(2) + (0,-0.4)$) to[out=180,in=270] ($(2) + (-0.4,0)$) to ($(4) + (-0.4,0)$) to[out=90,in=180] ($(4) + (0,0.4)$);
\filldraw[fill opacity=0,fill=white!70] 
($(10) + (0,0.4)$) to[out=0,in=90] ($(10) + (0.4,0)$) to ($(9) + (0.4,0)$) to[out=270,in=0] ($(9) + (0,-0.4)$) to[out=180,in=270] ($(9) + (-0.4,0)$) to ($(10) + (-0.4,0)$) to[out=90,in=180] ($(10) + (0,0.4)$);
\filldraw[fill opacity=0,fill=white!70] 
($(15) + (0,0.4)$) to[out=0,in=90] ($(15) + (0.4,0)$) to ($(14) + (0.4,0)$) to[out=270,in=0] ($(14) + (0,-0.4)$) to[out=180,in=270] ($(14) + (-0.4,0)$) to ($(15) + (-0.4,0)$) to[out=90,in=180] ($(15) + (0,0.4)$);
\filldraw[fill opacity=0,fill=white!70] 
($(2)+(0,0.5)$) to ($(6)+(0,0.5)$) to[out=0,in=90] ($(6) + (0.5,0)$) to[out=270,in=0] ($(6) + (0,-0.5)$) to ($(2) + (0,-0.5)$) to[out=180,in=270] ($(2) + (-0.5,0)$) to[out=90,in=180] ($(2)+(0,0.5)$); 
\filldraw[fill opacity=0,fill=white!70] 
($(6)+(0,0.3)$) to ($(8)+(0,0.3)$) to[out=0,in=90] ($(8) + (0.3,0)$) to[out=270,in=0] ($(8) + (0,-0.3)$) to ($(6) + (0,-0.3)$) to[out=180,in=270] ($(6) + (-0.3,0)$) to[out=90,in=180] ($(6)+(0,0.3)$); 
\filldraw[fill opacity=0,fill=white!70] 
($(10)+(0,0.3)$) to ($(15)+(0,0.3)$) to[out=0,in=90] ($(15) + (0.3,0)$) to[out=270,in=0] ($(15) + (0,-0.3)$) to ($(10) + (0,-0.3)$) to[out=180,in=270] ($(10) + (-0.3,0)$) to[out=90,in=180] ($(10)+(0,0.3)$); 
\filldraw[fill opacity=0,fill=white!70] 
($(9)+(0,0.3)$) to ($(14)+(0,0.3)$) to[out=0,in=90] ($(14) + (0.3,0)$) to[out=270,in=0] ($(14) + (0,-0.3)$) to ($(9) + (0,-0.3)$) to[out=180,in=270] ($(9) + (-0.3,0)$) to[out=90,in=180] ($(9)+(0,0.3)$); 
\node at (3,-1.5) {An overlapping pair to loose cycle};
\end{tikzpicture}
\caption{Examples of our hypergraphs.}
\end{figure}

Note that since we identify hypergraphs with one of their allowed edge orders, a hypergraph may fit the description of more than one of the above (e.g. a hypergraph could be both a spoiled cycle and an overlapping pair with handle, see Figure $1$). We define a \emph{bicycle} to be a hypergraph which is one of:
\begin{itemize}
\item{a spoiled cycle;}
\item{a double overlapping pair/loose cycle;}
\item{an overlapping pair/loose cycle with handle;}
\item{an overlapping pair/loose cycle to overlapping pair/loose cycle.}
\end{itemize}

Suppose that $e_{f_1},\dots,e_{f_u}$ for some $u \in \mathbb{N}$ is an allowed edge order. We call $e_{f_1},\dots,e_{f_u}$:
\begin{itemize}
\item{A \emph{Pasch configuration} if $k=3$, $u=4$, there are six vertices within the four edges, and each of these appear in precisely two of the edges (one vertex for each of the six pairs of edges); see Figure $2$;}
\item{A \emph{$k$-uniform loose $u$-star}, if the edges are completely disjoint except for all intersecting in one `central vertex';}
\item{A \emph{$(k,u/2,2)$-star}, if $u$ is even and the edges form two $k$-uniform loose $(u/2)$-stars $S_1$ and $S_2$, and there is a bijection $f$ between the edges of $S_1$ to those of $S_2$ such that $e$ and $f(e)$ share all their vertices except for the two central vertices, for each edge $e$ in $S_1$ (so a $(k,u,2)$-star has one more vertex than a $k$-uniform loose $u$-star, but has twice as many edges);}
\item{A \emph{$(k,u,a)$-link}, if there are $k+a$ vertices within the $u$ edges, and given any $i,j \in [u]$ with $i<j$ we have $|e_{f_i} \cup e_{f_j}|=k+a$ (i.e. any pair of the edges contain all $k+a$ vertices between them).}
\end{itemize}  
Observe that a $(k,u,a)$-link with $u\geq 3$ must have $a \leq \floor{k/(u-1)}$.

\begin{figure}[h]
\begin{tikzpicture}
\node[vertex] at (30:1) (1) {};
\node[vertex] at (90:2) (2) {};
\node[vertex] at (150:1) (3) {};
\node[vertex] at (210:2) (4) {};
\node[vertex] at (270:1) (5) {};
\node[vertex] at (330:2) (6) {};

\filldraw[fill opacity=0,fill=white!70] 
($(2)+(-0.3464,0.2)$) to ($(4)+(-0.3464,0.2)$) to[out=240,in=150] ($(4) + (-0.2,-0.3464)$) to[out=330,in=240] ($(4) + (0.3464,-0.2)$) to ($(2) + (0.3464,-0.2)$) to[out=60,in=330] ($(2) + (0.2,0.3464)$) to[out=150,in=60] ($(2)+(-0.3464,0.2)$); 

\filldraw[fill opacity=0,fill=white!70] 
($(2)+(-0.3464,-0.2)$) to ($(6)+(-0.3464,-0.2)$) to[out=300,in=210] ($(6) + (0.2,-0.3464)$) to[out=30,in=300] ($(6) + (0.3464,0.2)$) to ($(2) + (0.3464,0.2)$) to[out=120,in=30] ($(2) + (-0.2,0.3464)$) to[out=210,in=120] ($(2)+(-0.3464,-0.2)$); 

\filldraw[fill opacity=0,fill=white!70] 
($(4)+(0,0.4)$) to ($(6)+(0,0.4)$) to[out=0,in=90] ($(6) + (0.4,0)$) to[out=270,in=0] ($(6) + (0,-0.4)$) to ($(4) + (0,-0.4)$) to[out=180,in=270] ($(4) + (-0.4,0)$) to[out=90,in=180] ($(4)+(0,0.4)$); 


\filldraw[fill opacity=0,fill=white!70] 
($(3)+(-0.2598,0.15)$) to[out=60,in=180] ($(3)+(0.15,0.2598)$) to ($(1)+(-0.15,0.2598)$) to[out=0,in=120] ($(1)+(0.2598,0.15)$) to[out=300,in=60] ($(1)+(0.15,-0.2598)$) to ($(5)+(0.3,0)$) to[out=240,in=0]  ($(5)+(0,-0.3)$) to[out=180,in=300] ($(5)+(-0.3,0)$) to ($(3)+(-0.15,-0.2598)$) to[out=120,in=240] ($(3)+(-0.2598,0.15)$);

\node at (0,-2.5,0) {A Pasch configuration};
\end{tikzpicture}
\begin{tikzpicture}
\node[vertex] at (1,0) (1) {};
\node[vertex] at (2,0) (2) {};
\node[vertex] at (3,0) (3) {};
\node[vertex] at (1,2) (4) {};
\node[vertex] at (2,2) (5) {};
\node[vertex] at (3,2) (6) {};
\node[vertex] at (1,4) (7) {};
\node[vertex] at (2,4) (8) {};
\node[vertex] at (3,4) (9) {};
\node[vertex] at (0,2) (10) {};
\node[vertex] at (4,2) (11) {};

\filldraw[fill opacity=0,fill=white!70] 
($(10)+(0,0.4)$) to ($(6)+(0,0.4)$) to[out=0,in=90] ($(6) + (0.4,0)$) to[out=270,in=0] ($(6) + (0,-0.4)$) to ($(10) + (0,-0.4)$) to[out=180,in=270] ($(10) + (-0.4,0)$) to[out=90,in=180] ($(10)+(0,0.4)$); 

\filldraw[fill opacity=0,fill=white!70] 
($(7)+(0,0.4)$) to ($(9)+(0,0.4)$) to[out=0,in=90] ($(9) + (0.4,0)$) to[out=270,in=0] ($(9) + (0,-0.4)$) to ($(7) + (0.3116,-0.4)$) to[out=240,in=60] ($(10) + (0.3464,-0.2)$) to[out=240,in=330] ($(10) + (-0.2,-0.3464)$) to[out=150,in=240]  ($(10) + (-0.3464,+0.2)$) to[out=60,in=240] ($(7) + (-0.4,0)$) to[out=60,in=180] ($(7)+(0,0.4)$);

\filldraw[fill opacity=0,fill=white!70] 
($(1)+(0,-0.4)$) to ($(3)+(0,-0.4)$) to[out=0,in=270] ($(3) + (0.4,0)$) to[out=90,in=0] ($(3) + (0,0.4)$) to ($(1) + (0.3116,0.4)$) to[out=120,in=300] ($(10) + (0.3464,0.2)$) to[out=120,in=30] ($(10) + (-0.2,0.3464)$) to[out=210,in=120]  ($(10) + (-0.3464,-0.2)$) to[out=300,in=120] ($(1) + (-0.4,0)$) to[out=300,in=180] ($(1)+(0,-0.4)$);

\filldraw[fill opacity=0,fill=white!70] 
($(4)+(0,0.3)$) to ($(11)+(0,0.3)$) to[out=0,in=90] ($(11) + (0.3,0)$) to[out=270,in=0] ($(11) + (0,-0.3)$) to ($(4) + (0,-0.3)$) to[out=180,in=270] ($(4) + (-0.3,0)$) to[out=90,in=180] ($(4)+(0,0.3)$); 

\filldraw[fill opacity=0,fill=white!70] 
($(9)+(0,0.3)$) to ($(7)+(0,0.3)$) to[out=180,in=90] ($(7) + (-0.3,0)$) to[out=270,in=180] ($(7) + (0,-0.3)$) to ($(9) + (-0.2337,-0.3)$) to[out=300,in=120] ($(11) + (-0.2598,-0.15)$) to[out=300,in=210] ($(11) + (0.15,-0.2598)$) to[out=30,in=300]  ($(11) + (0.2598,+0.15)$) to[out=120,in=300] ($(9) + (0.3,0)$) to[out=120,in=0] ($(9)+(0,0.3)$);

\filldraw[fill opacity=0,fill=white!70] 
($(3)+(0,-0.3)$) to ($(1)+(0,-0.3)$) to[out=180,in=270] ($(1) + (-0.3,0)$) to[out=90,in=180] ($(1) + (0,0.3)$) to ($(3) + (-0.2337,0.3)$) to[out=60,in=240] ($(11) + (-0.2598,0.15)$) to[out=60,in=150] ($(11) + (0.15,0.2598)$) to[out=330,in=60]  ($(11) + (0.2598,-0.15)$) to[out=240,in=60] ($(3) + (0.3,0)$) to[out=240,in=0] ($(3)+(0,-0.3)$);

\node at (2,-1.5) {A $(4,3,2)$-star};
\end{tikzpicture}
\qquad
\begin{tikzpicture}[scale=1.0]
\node[vertex] at (1,0) (1) {};
\node[vertex] at (0,1) (2) {};
\node[vertex] at (3,4) (3) {};
\node[vertex] at (4,3) (4) {};
\node[vertex] at (0,3) (5) {};
\node[vertex] at (1,4) (6) {};
\node[vertex] at (3,0) (7) {};
\node[vertex] at (4,1) (8) {};
\node[vertex] at (2,2.5) (9) {};
\node[vertex] at (1.6,1.8) (10) {};
\node[vertex] at (2.4,1.8) (11) {};

\filldraw[fill opacity=0,fill=white!70] 
($(1)+(0,-0.2)$) to ($(7)+(0,-0.2)$) to[out=0,in=270] ($(8)+(0.2,0)$) to ($(4)+(0.2,0)$) to[out=90,in=0] ($(3)+(0,0.2)$) to[out=180,in=45] ($(3)+(-0.1414,0.1414)$) to ($(2)+(-0.1414,0.1414)$) to[out=225,in=90] ($(2)+(-0.2,0)$) to[out=270,in=180] ($(1)+(0,-0.2)$);

\filldraw[fill opacity=0,fill=white!70] 
($(6)+(0,0.4)$) to ($(3)+(0,0.4)$) to[out=0,in=90] ($(4)+(0.4,0)$) to ($(8)+(0.4,0)$) to[out=270,in=0] ($(7)+(0,-0.4)$) to[out=180,in=315] ($(7)+(-0.2828,-0.2828)$) to ($(5)+(-0.2828,-0.2828)$) to[out=135,in=270] ($(5)+(-0.4,0)$) to[out=90,in=180] ($(6)+(0,0.4)$);

\filldraw[fill opacity=0,fill=white!70] 
($(3)+(0,0.6)$) to ($(6)+(0,0.6)$) to[out=180,in=90] ($(5)+(-0.6,0)$) to ($(2)+(-0.6,0)$) to[out=270,in=180] ($(1)+(0,-0.6)$) to[out=0,in=225] ($(1)+(0.4242,-0.4242)$) to ($(4)+(0.4242,-0.4242)$) to[out=45,in=270] ($(4)+(0.6,0)$) to[out=90,in=0] ($(3)+(0,0.6)$);

\filldraw[fill opacity=0,fill=white!70] 
($(7)+(0,-0.8)$) to ($(1)+(0,-0.8)$) to[out=180,in=270] ($(2)+(-0.8,0)$) to ($(5)+(-0.8,0)$) to[out=90,in=180] ($(6)+(0,0.8)$) to[out=0,in=135] ($(6)+(0.5656,0.5656)$) to ($(8)+(0.5656,0.5656)$) to[out=315,in=90] ($(8)+(0.8,0)$) to[out=270,in=0] ($(7)+(0,-0.8)$);

\node at (2,-1.5) {A $(9,4,2)$-link};
\end{tikzpicture}
\caption{Further examples of our hypergraphs.}
\end{figure}

Given $S$ is any of our defined hypergraphs, we say that $e_t$ \emph{completes} $S$ if the edge order $e_0,\dots,e_{t-1}$ does not contain a copy of $S$, whereas $e_0,\dots,e_t$ does.

Note that all bicycles have a valid edge order which contains at least two bad edges; we will in fact show that any hypergraph which has a valid edge order with at least two bad edges must contain a bicycle (see Claim~\ref{dlc2}). Meanwhile Pasch configurations, $(k,u,2)$-stars with $u \geq 2$ and $(k,u,a)$-links with $u\geq 3$ and $a \leq \floor{k/(u-1)}$ all have the property that any allowed edge order contains at least one $k$-bad edge, and also precisely one bad edge; in particular these hypergraphs do not contain a bicycle. 

The roles of bicycles are crucial in our proof. The deterministic lemmas will imply that Breaker has a winning strategy for the game played on any component of $H$ which does not contain a bicycle. The probabilistic lemma will show that w.h.p. $H$ does not contain any bicycles. \\

{\bf \ref{Q2} Two deterministic lemmas.}
Recall that we wish to show that if Maker wins the game on the associated hypergraph $H$, then $H$ must contain a particular subhypergraph. (In particular, this subhypergraph will be a bicycle.) This section contains two deterministic lemmas, which together prove that the contrapositive statement holds; that is, if $H$ does not contain a bicycle, then Breaker has a strategy to win the game on $H$.

\begin{lemma}\label{dl1}
Let $H'$ be a connected component of $H$ and suppose $H'$ does not contain a bicycle. Then $H'$ has an edge order $e_0,\dots,e_t$ with the property that there exists $a \in [0,t]$ such that $e_i$ is good for all $i \in [a+1,t]$, and also precisely one of the following holds:
\begin{itemize}
\item[(i)]{$a=0$;}
\item[(ii)]{$a \geq 2$ and $e_0,\dots,e_a$ forms a loose cycle;}
\item[(iii)]{$a=1$ and $e_0,e_1$ forms an overlapping pair;}
\item[(iv)]{$a=3$, $k=3$ and $e_0,\dots,e_3$ forms a Pasch configuration;}
\item[(v)]{$a \geq 3$ is odd and $e_0,\dots,e_a$ forms a $(k,(a+1)/2,2)$-star;}
\item[(vi)]{$a \geq 2$, and $e_0,\dots,e_a$ forms a $(k,(a+1),d)$-link, where $d \leq \floor{k/a}$.}
\end{itemize}
\end{lemma}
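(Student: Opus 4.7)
The plan is to reduce to a ``core'' situation by peeling off easy edges, and then to classify the core via case analysis. Call an edge $e$ of a subhypergraph $L$ a \emph{leaf edge} if $e$ shares at most one vertex with the union of the other edges of $L$. Iteratively remove leaf edges from $H'$, choosing at each step a leaf that keeps the remaining subhypergraph connected whenever possible; let $K$ denote the subhypergraph remaining when no further leaf edge exists. In the final edge order we will place the edges of $K$ first, in a manner to be determined, and then append the peeled edges in reverse order of removal; by construction each appended edge shares exactly one vertex with the previously placed edges, hence is good and fits into the tail $e_{a+1},\dots,e_t$.

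If $K$ contains at most one edge then $H'$ is a loose tree and we obtain case~(i) with $a=0$. Otherwise every edge of $K$ shares at least two vertices with the union of the other edges of $K$, and the crux of the proof is the structural classification: \emph{if $H'$ contains no bicycle, then $K$ is an overlapping pair, a loose cycle, a Pasch configuration (when $k=3$), a $(k,u/2,2)$-star, or a $(k,u,d)$-link with $u \geq 3$ and $d \leq \floor{k/(u-1)}$.} Each of these admits an allowed edge order whose initial segment matches one of the forms (ii)--(vi) of the lemma, so combining such an order with the reversed peeling order yields the required edge order of $H'$.

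The main obstacle is proving this structural classification by case analysis on $|K|$ and on the intersection pattern among edges of $K$. The case $|K|=2$ immediately gives an overlapping pair since the two edges must share $\geq 2$ vertices. For $|K| \geq 3$, I would argue that any configuration not on the list forces a bicycle in $K$: for example, if two edges of $K$ form an overlapping pair and a third edge attaches with $\geq 2$ shared vertices in a way that does not extend to a $(k,u,d)$-link, the three edges contain either an overlapping pair with handle or a spoiled cycle; similarly, attempts to attach further edges to a loose cycle, or to combine two distinct cyclic substructures of $K$, create one of the four bicycle types (double loose cycle, overlapping pair/loose cycle with handle, overlapping pair/loose cycle to overlapping pair/loose cycle, spoiled cycle). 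The special configurations --- Pasch, $(k,u,2)$-star, and $(k,u,d)$-link --- escape these obstructions precisely because, as noted just after the hypergraph definitions, every allowed edge order of them contains a $k$-bad edge, so no valid edge order with two bad edges exists. The delicate part is verifying completeness of this enumeration, i.e.\ showing that no bicycle-free $K$ outside the listed classes can exist; I expect this requires careful bookkeeping on the degree distribution of each edge's vertices in $K$, combined with induction on $|K|$.
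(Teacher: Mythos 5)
Your plan is conceptually different from the paper's: you peel ``leaf'' edges (edges sharing at most one vertex with the rest) to a core $K$, then classify $K$; the paper instead starts from an overlapping pair or loose cycle (Claim~\ref{dlc1}), normalizes the remaining edges to be good or $k$-bad (Claim~\ref{dlc3}), and then iteratively absorbs $k$-bad edges, showing each one grows the special structure to a larger $(k,p,2)$-star, $(k,p,a)$-link, or reveals a Pasch configuration (Claim~\ref{dlc4}). The peel-and-classify idea is a perfectly sensible framing, and your preliminary reductions (the core has no leaf edges; the peeled edges re-inserted in reverse order are good) are sound.

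However, the step you describe as ``the main obstacle'' and ``the delicate part'' --- namely, that any leafless bicycle-free core $K$ must be an overlapping pair, a loose cycle, a Pasch configuration, a $(k,p,2)$-star, or a $(k,p,a)$-link --- is not a delicate residual detail; it is essentially the entire content of the lemma, and you do not prove it. Concretely, you would still need to show: (a) that $K$ cannot contain two distinct overlapping-pair/loose-cycle substructures joined by a loose path (which is where the spoiled cycle, double loose cycle, and overlapping-pair/loose-cycle-to-overlapping-pair/loose-cycle bicycles enter); (b) that an edge contained entirely in the union of an overlapping pair $e_0, e_1$ either forms a $(k,3,|e_0 \setminus e_1|)$-link with them or forces a bad edge order (this is the argument in Case~2a of Claim~\ref{dlc4}, which requires the observation that every permutation of the three edges has a $k$-bad third edge); (c) that extending a $(k,p,2)$-star, $(k,p,a)$-link, or Pasch configuration by a further edge of $K$ either preserves the class or yields a bicycle; and (d) the uniqueness/termination of the iterated extension. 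Your sketch handles only the simplest three-edge cases and waves at the rest with ``careful bookkeeping on the degree distribution.'' Without the case analysis corresponding to Claims~\ref{dlc2}--\ref{dlc4}, the proposal is a plausible outline but not a proof.
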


\begin{lemma}\label{dl2}
Let $H'$ be a component of $H$ which is as described in Lemma~\ref{dl1}. Breaker has a strategy for winning the Maker-Breaker game played on $H'$.
\end{lemma}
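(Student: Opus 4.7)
The plan is to give Breaker an explicit \emph{pairing strategy}, with a small dynamic tweak for case (iv). Recall that in a pairing strategy Breaker fixes in advance a family $\Pi$ of pairwise disjoint two-element vertex sets and, whenever Maker claims a paired vertex, Breaker immediately claims its partner (playing arbitrarily otherwise); whenever every edge of $H'$ contains at least one pair of $\Pi$ as a subset, Breaker owns a vertex of every edge and therefore wins.

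I would first handle the good edges $e_{a+1},\dots,e_t$ uniformly: each has exactly one old vertex and $k-1\ge 2$ new vertices, so I add to $\Pi$ a pair consisting of two new vertices of $e_i$, automatically disjoint from all other such pairs. I would then handle the core $e_0,\dots,e_a$ one case at a time. For (i) pair any two vertices of $e_0$. For (ii), in the loose cycle with shared vertices $u_i\in e_{i-1}\cap e_i$ (indices mod $a+1$) and private vertex sets of size $k-2\ge 1$, pair each $u_i$ with a private vertex of $e_i$, placing the pair inside $e_i$. For (iii), pair any two vertices of $e_0\cap e_1$ (which has size at least two), covering both edges with one pair. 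For (v), the $(k,(a+1)/2,2)$-star decomposes into $(a+1)/2$ corresponding pairs of edges sharing $k-1\ge 2$ non-central vertices apiece; in each such pair, add a pair of two shared non-central vertices, simultaneously covering both edges of the fibre. For (vi), the $(k,a+1,d)$-link has pairwise disjoint missing sets $M_0,\dots,M_a$ of size $d$; if $a\ge 3$, the two pairs $\{v_0,v_1\},\{v_2,v_3\}$ with $v_i\in M_i$ together cover every edge (each covers all edges save its two ``sourcing'' ones), while for $a=2$ I would use either one cross-pair together with a pair involving a vertex outside every $M_i$ (such a vertex exists when $d=1$ since there are $k-ad=k-2\ge 1$ of them), or three cross-pairs between the $M_i$'s using two vertices of each $M_i$ (when $d\ge 2$). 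In every case the core pair(s) are disjoint from the good-edge pairs since new vertices of good edges lie outside the core.

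The one real obstacle is case (iv), the Pasch configuration: each vertex lies in exactly two edges and any two vertices lie together in at most one edge, so each pair is contained in at most one edge; since covering four edges requires four such pairs but only three disjoint pairs fit on six vertices, no static blocking pairing exists. I would instead use a semi-dynamic strategy: label the six vertices by $v_{ij}$ ($i<j\in\{0,1,2,3\}$) with $v_{ij}\in e_i\cap e_j$, apply the good-edge pairing to any of Maker's moves in good edges, and wait until Maker first claims a core vertex $v_{ij}$. Breaker then replies with $v_{ik}$ for any $k\notin\{i,j\}$, simultaneously blocking $e_i$ (which now contains both $v_{ij}$ and $v_{ik}$) and $e_k$ (which contains $v_{ik}$), and appends to $\Pi$ the pairs $\{v_{jk},v_{j\ell}\}\subseteq e_j$ and $\{v_{i\ell},v_{k\ell}\}\subseteq e_\ell$ (where $\ell$ is the fourth index). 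These are legitimate additions because all their vertices are still unclaimed at the moment they are introduced, and the remainder of the game then plays out via the standard pairing response.
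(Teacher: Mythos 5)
Your pairing-based organization is correct in spirit and close to the paper's approach, but the paper does not package everything as a pairing strategy: for the $(k,u,a)$-link it uses a simple counting argument (a $(k,u,a)$-link has at most $2k-2$ vertices, so if Breaker echoes every Maker move inside $S$, Maker gets at most $k-1$ of them and cannot own a full edge), and for the Pasch configuration it uses a robust threat-response rule (``if Maker holds two of the three vertices of some edge of $S$, take the third; otherwise take any vertex in $S$''). Your explicit cross-pairs for the link are a nice alternative, though they require the case split $a\ge 3$ vs.\ $a=2$, $d=1$ vs.\ $d\ge 2$ that the paper's one-line argument avoids.

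The genuine gap is in your Pasch case (iv). Your ``wait for Maker's first core move, reply $v_{ik}$, then append two pairs'' plan presupposes that when Maker first touches the core, all six core vertices other than $v_{ij}$ are still available. But your default rule is ``play arbitrarily otherwise'', and an arbitrary Breaker move can land in the core before Maker does, or Breaker can even be \emph{forced} into the core if Maker exhausts the non-core unpaired vertices on Breaker's turn (this can happen when the number of such vertices is odd). In that situation your prescribed reply $v_{ik}$ may already be taken, and the two appended pairs may not consist of four fresh vertices. The fix is routine — stipulate that arbitrary plays avoid the six core vertices whenever possible, and when Breaker does own a core vertex $v_{ab}$ at the moment Maker first plays in the core, note that $e_a,e_b$ are already dead and set up pairs inside the two remaining edges — but it needs to be said, since the correctness of a pairing strategy hinges on the pairs being available when they are declared. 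The paper's Pasch strategy (respond in $S$ whenever possible, and always take the third vertex of a threatened edge) is indifferent to how Breaker's earlier moves happened to fall, which is exactly why it sidesteps this bookkeeping.
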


Note that by Breaker always choosing a vertex from the same component as Maker if he can, these results imply that if $H$ does not contain a bicycle, then Breaker can win the game played on $H$, and therefore the $(B,b')$-game on $[n]_p$.

We now prove four claims; the proof of Lemma~\ref{dl1} will follow easily from the statements of these claims.

\begin{claim}\label{dlc1} 
Suppose that $E_1:=e_0,\dots,e_t$ is an allowed edge order of the edges of a connected hypergraph $J$, for which $e_i$ for some $i \in [t]$ is the first bad or $k$-bad edge. Then we have the following:
\begin{itemize}
\item[(i)]{Either there exists $j \in [0,i-1]$ such that $e_j,e_i$ forms an overlapping pair, or $e_i$ completes a loose cycle.}
\item[(ii)]{Suppose that $S$ is a connected subhypergraph of $J$ with $s$ edges, which contains the overlapping pair or loose cycle guaranteed by (i). Then there exists an allowed edge order $E_2$ of $E(J)$ which starts with the overlapping pair or loose cycle, followed by the rest of the edges of $S$, followed by any remaining edges of $J$.}
\item[(iii)]{If $E_1$ is valid and if $S$ in (ii) is a loose cycle or overlapping pair, then it is possible to construct $E_2$ in (ii) so that additionally it is valid.}
\end{itemize}
\end{claim}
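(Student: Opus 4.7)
The plan is to prove the three parts in sequence, exploiting the \emph{loose tree} structure of the prefix $e_0, \ldots, e_{i-1}$. Since every edge $e_a$ with $1 \leq a \leq i-1$ is good, it shares exactly one vertex with $\bigcup_{b<a} e_b$, so this prefix is connected (by induction on $a$) and contains no loose cycle: any putative loose cycle would force its maximum-indexed edge to contain at least two old vertices, contradicting goodness.

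For part (i), I split on $e_i$'s overlap with earlier edges. If some $e_j$ with $j<i$ satisfies $|e_j \cap e_i| \geq 2$, then since the edges are distinct sets one has $|e_j \cap e_i| \leq k-1$, and so $e_j, e_i$ is an overlapping pair. Otherwise every earlier edge meets $e_i$ in at most one vertex; picking two old vertices $u, v$ of $e_i$ (which exist since $e_i$ is bad or $k$-bad) so that the length of the loose path $e_{f_1}, \ldots, e_{f_r}$ connecting $u$ and $v$ in the loose tree is minimised over all such pairs, the one-vertex intersection bound forces $|e_{f_1} \cap e_i| = |e_{f_r} \cap e_i| = 1$, while the minimality rules out $e_{f_s} \cap e_i \neq \emptyset$ for $2 \leq s \leq r-1$ (any intermediate old vertex of $e_i$ would yield a strictly shorter such path). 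Hence $e_{f_1}, \ldots, e_{f_r}, e_i$ is a loose cycle.

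For part (ii), denote the overlapping pair or loose cycle produced in (i) by $P$. Place the edges of $P$ first in their natural allowed order, then extend greedily by repeatedly appending an edge of $S$ that shares a vertex with the current structure (possible by connectedness of $S$) until $S$ is exhausted, and proceed analogously with the edges of $E(J) \setminus S$ (using connectedness of $J$). Every appended edge has at least one old vertex, hence is good, bad, or $k$-bad, so the resulting order $E_2$ is allowed.

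For part (iii), where $S$ equals $P$, we additionally need each appended edge to have at least one new vertex. The key reduction is to show that, provided $E_1$ is valid, no non-$S$ edge $e$ can be \emph{interior}, i.e.\ satisfy $e \subseteq V(S)$. I would prove this by case analysis on the $E_1$-index $m$ of $e$ relative to the indices of the $S$-edges: if $m$ exceeds every $S$-index then all $k$ vertices of $e$ lie in earlier $E_1$-edges, so $e$ is $k$-bad in $E_1$, contradicting validity; otherwise $e$ is good in $E_1$ with a single old vertex, and tracking how its $k-1$ new-in-$E_1$ vertices must distribute among the $S$-edges (the only host for them, up to that single old vertex) forces $|e \cap e_i| = k - 1$, and this combined with the at-most-one-vertex intersection of any two good edges of $E_1$ contradicts $|e_j \cap e_i| \geq 2$ (overlapping-pair case) or the consecutive-intersection pattern of a loose cycle. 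Once interior edges are ruled out, every non-$S$ edge has a vertex outside $V(S)$, and I would complete the proof by using the system of distinct representatives implicit in $E_1$'s validity (choose one new-in-$E_1$ witness $v_e$ per edge; the $v_e$'s are automatically distinct) and taking a linear extension of the partial order ``$e$ precedes $f$ whenever $v_e \in f$'' that starts with $S$ in its natural order. The main obstacle will be re-selecting the witnesses for the $S$-edges so that no non-$S$ witness is forced into $V(S)$ and no $S$-edge is $\prec$-preceded by a non-$S$ edge; this step should go through because each $S$-edge has at least $k - 2 \geq 1$ vertex slots outside the other $S$-edges when $k \geq 3$, giving enough room to re-route witnesses away from $V(S)$-collisions.
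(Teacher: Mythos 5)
Your parts (i) and (ii) are essentially correct and follow the same route as the paper: for (i), the prefix $e_0,\dots,e_{i-1}$ is a ``loose tree,'' and in the case where every earlier edge meets $e_i$ in at most one vertex you take a minimal path between two old vertices of $e_i$ (the paper phrases this as a minimal path from $\{x_1\}$ to $\{x_2,\dots,x_q\}$; your minimizing-pair version works for the same reason); for (ii), the greedy connectivity argument is exactly the paper's.

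Part (iii) has a genuine gap. Your proposed route --- rule out ``interior'' edges $e \subseteq V(S)$, then build a valid order from an SDR of new-in-$E_1$ witnesses and a linear extension of the relation ``$v_e \in f$'' --- is not carried through, and the obstacle you flag is real: even if $e \not\subseteq V(S)$, all $k-1$ new-in-$E_1$ vertices of a non-$S$ edge $e$ that precedes $S$ in $E_1$ may lie inside $V(S)$ (newness in $E_1$ only forbids occurrence in \emph{earlier} edges, not in the later $S$-edges), so the witness $v_e$ is not available as a new vertex in an order that begins with $S$. ``Re-routing witnesses away from $V(S)$'' then requires something you have not supplied, and the hand-waved slack of ``$k-2\ge 1$ slots'' does not address that the competing constraints come from edges outside $S$, not from the $S$-edges themselves. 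Moreover, ruling out interior edges alone is not enough: an edge may have a vertex outside $V(S)$ yet have all of its vertices inside $V(S)\cup V(e_{g_1})\cup\dots\cup V(e_{g_{m-1}})$ at the moment it is appended, which is what you actually need to exclude.

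The paper's argument for (iii) is a short counting argument that avoids all of this. Since $E_1$ is valid, $e_0$ contributes $k$ vertices, each $e_a$ with $a\in[1,i-1]$ contributes $k-1$ new vertices, and $e_i$ contributes at least one, so $J':=\{e_0,\dots,e_i\}$ has $|V(J')|\ge k+(k-1)(i-1)+1$. Any allowed edge order of $E(J')$ containing a $k$-bad edge would give $|V(J')|\le k+(k-1)(i-1)$, a contradiction; hence every allowed edge order of $E(J')$ is automatically valid. One then builds $E_2$ by placing $S$, then the rest of $\{e_0,\dots,e_i\}$ greedily, then $e_{i+1},\dots,e_t$ in their original order; the first $i+1$ edges are valid by the counting argument, and for each $a\ge i+1$ the set of previously placed edges is $\{e_0,\dots,e_{a-1}\}$ in both $E_1$ and $E_2$, so $e_a$ keeps its good/bad status. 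I would recommend replacing your SDR construction by this counting argument.
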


\begin{proof}
For (i), if there exists $j \in [0,i-1]$ such that $|e_j \cap e_i| \geq 2$ then $e_j,e_i$ forms an overlapping pair and we are done. So suppose: 
\begin{enumerate}[label=(A\arabic*)]
\item{For all $j \in [0,i-1]$ we have $|e_j \cap e_i| \leq 1$.}\label{da1}
\end{enumerate}
Also note that since $e_i$ is the first bad or $k$-bad edge in $E_1$, we have:
\begin{enumerate}[label=(A\arabic*),resume]
\item{For all $j \in [1,i-1]$, $e_j$ is good in $E_1$.}\label{da2}
\end{enumerate}
Since $e_i$ is bad or $k$-bad, it has $q \geq 2$ old vertices in the edge order $E_1$. Label these as $x_1,\dots,x_q$ and consider a minimal path $P_1:=e_{f_1},\dots,e_{f_u}$ in $\{e_0,\dots,e_{i-1}\}$ from $X_1:=\{x_1\}$ to $X_2:=\{x_2,\dots,x_q\}$. By definition of $P_1$, \ref{da1} and \ref{da2} we have
\begin{itemize}
\item{$|e_{f_j} \cap e_i| = \begin{cases} 1 \text{ if $j=1$ or $j=u$;} \\ 0 \text{ otherwise;} \end{cases}$}
\item{$u \geq 2$;}
\item{$P_1$ is a loose path.}
\end{itemize}
It follows from these three facts that $e_{f_1},\dots,e_{f_u},e_i$ forms a loose cycle. By \ref{da2} $e_0,\dots,e_{i-1}$ clearly does not contain a loose cycle, and hence $e_i$ completes a loose cycle in $e_0,\dots,e_i$.

For (ii), such an allowed edge order $E_2$ exists since both $S$ and $J$ are connected; simply pick the overlapping pair or loose cycle first, then pick the remaining edges of $S$ in any way so that each edge has non-empty intersection with the set of all previously chosen edges. Then pick the remaining edges of $J$ in the same way. 

For (iii), suppose $E_1$ is valid and $S$ is an overlapping pair or loose cycle. Then by the definition of $E_1$, there are $k$ new vertices in $e_0$, $k-1$ new vertices in $e_a$ for each $a \in [i-1]$ and at least one new vertex in $e_i$. Thus the hypergraph $J':=e_0,\dots,e_i$ satisfies
\begin{enumerate}[label=(A\arabic*),resume]
\item{$|V(J')| \geq k+(k-1)(i-1)+1$.}\label{da3}
\end{enumerate}
If there was an allowed edge order $E_3$ of $E(J')$ which contained a $k$-bad edge, we would have $|V(J')| \leq k+(k-1)(i-1)$ since $E_3$ has one initial edge, at most $i-1$ good or bad edges, and at least one $k$-bad edge. However this violates \ref{da3} and thus:
\begin{enumerate}[label=(A\arabic*),resume]
\item{All allowed edge orders of $E(J')$ are valid.}\label{da4}
\end{enumerate}
Now consider the edge order $E_2$ which starts with the loose cycle or overlapping pair, followed by the rest of the edges in $\{e_0,\dots,e_i\}$ chosen so that each edge has non-empty intersection with the set of all previously chosen edges, then followed by $e_{i+1},\dots,e_t$ (in this order). First note that for any $e_a$ with $a \in [i+1,t]$, the set of all previously chosen edges is $\{e_0,\dots,e_{a-1}\}$ in both edge orders $E_1$ and $E_2$. Thus if $e_a$ is good or bad in $E_1$, then it is also good or bad in $E_2$. Finally note that by \ref{da4}, the first $i+1$ edges in $E_2$ form a valid edge order of $E(J')$. We conclude that $E_2$ is the valid edge order of $E(J)$ required. 
\end{proof}

\begin{claim}\label{dlc2}
A hypergraph $J$ does not contain a bicycle if and only if any valid edge order of the edges of any connected subhypergraph $J'$ of $J$ has at most one bad edge.
\end{claim}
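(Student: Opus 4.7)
The statement is a biconditional, and I would prove each direction separately. For the easier direction --- if $J$ contains a bicycle, then some connected subhypergraph admits a valid edge order with at least two bad edges --- the bicycle itself, being a connected subhypergraph of $J$, comes equipped with a canonical edge order from its very definition. I would verify by inspection that each of the four bicycle types (spoiled cycle, double overlapping pair/loose cycle, overlapping pair/loose cycle with handle, and overlapping pair/loose cycle to overlapping pair/loose cycle) admits its canonical order as a valid edge order carrying at least two bad edges, these bad edges arising from the two ``closing'' events of the bicycle. For instance, in a spoiled cycle $e_{f_1}, \ldots, e_{f_u}$, the edge $e_{f_2}$ is bad because $|e_{f_1} \cap e_{f_2}| \in [2, k-1]$ by definition of overlapping pair, and $e_{f_u}$ is bad because it shares one vertex with $e_{f_{u-1}}$ along the loose path and the additional vertex $y \in e_{f_1}$. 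The other three cases are analogous.

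For the harder direction I would take a valid edge order $E = e_0, \ldots, e_t$ of a connected subhypergraph $J'$ with at least two bad edges and locate a bicycle in $J'$. Let $e_i$ and $e_j$ denote the first two bad edges of $E$, with $i < j$. Applying Claim~\ref{dlc1}(i) to $E$ yields an overlapping pair or loose cycle $C_1 \subseteq \{e_0, \ldots, e_i\}$ with $e_i$ as its final edge. For the second bad edge $e_j$, I would argue along the same lines: if $|e_k \cap e_j| \geq 2$ for some $k < j$, then $\{e_k, e_j\}$ is an overlapping pair $C_2$; otherwise the old vertices of $e_j$ sit one at a time in distinct earlier edges, and taking a minimal path in $\{e_0, \ldots, e_{j-1}\}$ between two such old vertices (which exists since every initial segment $e_0, \ldots, e_{m}$ of an allowed edge order is connected) produces a loose path that, together with $e_j$, is a loose cycle $C_2$.

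It remains to show that $C_1$ and $C_2$ (possibly together with one additional connecting loose path drawn from the connected subhypergraph $J'$) contain a bicycle. I would case-split on the interaction between $C_1$ and $C_2$. If they are vertex-disjoint, the connectedness of $J'$ supplies a minimal connecting loose path, which yields an overlapping pair/loose cycle to overlapping pair/loose cycle bicycle; if $|V(C_1) \cap V(C_2)| = 1$, the same bicycle type arises with the ``middle'' path empty ($w=v$ in the definition); if $|V(C_1) \cap V(C_2)| \geq 2$, then depending on where the shared vertices lie (inside or outside the overlapping vertices of the pair or loose cycle) the union either realises $e_j$ as a handle attached to $C_1$ (giving an overlapping pair/loose cycle with handle), or two of the shared vertices play the roles of $x, y$ in the definition of a spoiled cycle, a double overlapping pair, or a double loose cycle.

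The main obstacle I foresee is the delicate subcase in which the minimal path used to construct $C_2$ unavoidably passes through $e_i$, so that $C_1$ and $C_2$ share an edge rather than merely vertices. I would handle this by first attempting to choose the two old vertices of $e_j$ so that some minimal path between them avoids $e_i$ whenever possible, and, when every such path must go through $e_i$, arguing directly that the tight overlap of $C_1$ and $C_2$ already realises one of the four bicycle configurations: intuitively, two cycle-like structures sharing an edge must, once the shared portion is re-routed or deleted, already contain either a spoiled cycle or an overlapping pair/loose cycle with handle. Additionally, one may need to invoke Claim~\ref{dlc1}(iii) to first reorder $E$ into a canonical form starting with $C_1$, so that the structure observed at step $j$ can be analysed cleanly relative to the already identified cycle $C_1$.
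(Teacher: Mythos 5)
Your forward direction is fine and is what the paper does. For the reverse direction, your overall skeleton --- normalise the order via Claim~\ref{dlc1} so that it begins with an overlapping pair or loose cycle $C_1$, then analyse how the second bad edge attaches --- is also the paper's strategy, but two steps you treat as routine are where the real content lies, and one of them is wrong as stated. Claim~\ref{dlc1}(i) applies only to the \emph{first} bad or $k$-bad edge; its proof uses that every earlier edge is good. For the second bad edge $e_j$, a minimal path between two of its old vertices taken inside all of $\{e_0,\dots,e_{j-1}\}$ need not be a loose path: it can run through the bad edge $e_i$ or through the overlapping pair, with consecutive intersections of size at least two, and then no loose cycle through $e_j$ exists at all. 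Concretely, if $C_1=e_0,e_1$ is an overlapping pair and the old vertices of $e_j$ are exactly $x\in e_0\setminus e_1$ and $y\in e_1\setminus e_0$, there is no second overlapping pair or loose cycle $C_2$ to intersect with $C_1$; the structure one must extract is an overlapping pair with handle. This is why the paper never builds a free-standing $C_2$: it treats $V(P_1)$ as a single terminal set and takes its minimal paths only among the good edges $\{e_{i+1},\dots,e_{j-1}\}$ (sometimes deleting further edges), which is exactly what forces those paths to be loose. The same issue affects your connecting path when the two structures are disjoint: connectedness supplies a minimal path, not a loose one, and one must additionally verify the disjointness conditions in the definition of an overlapping pair/loose cycle to overlapping pair/loose cycle (the second edge of the connector avoids $V(P_1)$, the second-to-last avoids the other end); these follow from minimality together with goodness of the intermediate edges, not from connectedness alone.

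Second, the heart of the claim --- verifying that the resulting configuration really is one of the four bicycle types --- is precisely the paper's six-case analysis with sub-cases, and in your sketch it is compressed to ``intuitively, two cycle-like structures sharing an edge must \dots contain either a spoiled cycle or an overlapping pair/loose cycle with handle.'' That intuition omits genuine outcomes (double loose cycle, double overlapping pair, loose cycle to loose cycle) and skips steps the paper needs: when $P_1$ is a loose cycle and the two attaching vertices do not lie in a common edge of $P_1$, one must pick the initial segment $e_0,\dots,e_d$ with $d$ smallest such that $y\in e_d$ to obtain a spoiled cycle; and when $e_j$ meets every earlier edge in at most one vertex and misses $V(P_1)$ entirely (the paper's last case), a single minimal path does not suffice --- the paper needs two successive minimal paths before a bicycle appears. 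So the proposal has the right skeleton, but the construction of $C_2$ fails in exactly the cases that drive the analysis, and the decisive case analysis is asserted rather than carried out.
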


\begin{proof}
First note that if $J$ contains a bicycle, then by considering the edge order $e_{f_1},\dots,e_{f_u}$ given in the definitions of each of the hypergraphs which the bicycle could be, we see immediately that $J$ contains a connected subhypergraph which has a valid edge order with at least two bad edges.

Now we must show that if there exists a connected subhypergraph $J'$ of $J$ and a valid edge order of $E(J')$ with at least two bad edges, then $J$ contains a bicycle. 

So let $J'$ be such a hypergraph, and let $E_1:=e_0,\dots,e_t$ be the valid edge order of $E(J')$ with at least two bad edges. By using all three parts of Claim~\ref{dlc1}, we may assume without loss of generality that there exists $i \in [t]$ such that we have precisely one of the following: 
\begin{enumerate}[label=(B\arabic*)]
\item{$i=1$ and $e_0,e_1$ forms an overlapping pair;}\label{db1}
\item{$e_0,\dots,e_i$ forms a loose cycle (with edges ordered cyclically).}\label{db2}
\end{enumerate}
Let $P_1:=e_0,\dots,e_i$ and let $j \in [i+1,t]$ be such that $e_j$ is the next bad edge in $E_1$ after $e_i$. We have precisely one of the following:
\begin{enumerate}[label=(B\arabic*),resume]
\item{$2 \leq |e_j \cap V(P_1)| \leq k-1$;}\label{db3}
\item{There exists $a \in [i+1,j-1]$ such that $|e_j \cap e_a| \geq 2$ and $x=e_j \cap V(P_1)$ and $y=e_a \cap V(P_1)$ are distinct vertices;}\label{db4}
\item{There exists $a \in [i+1,j-1]$ such that $|e_j \cap e_a| \geq 2$ and $|(e_a \cup e_j) \cap V(P_1)| =1$;}\label{db5}
\item{There exists $a \in [i+1,j-1]$ such that $|e_j \cap e_a| \geq 2$ and $(e_a \cup e_j) \cap V(P_1) = \emptyset$;}\label{db6}
\item{For all $a \in [0,j-1]$ we have $|e_j \cap e_a| \leq 1$, and $|e_j \cap V(P_1)|=1$;}\label{db7}
\item{For all $a \in [0,j-1]$ we have $|e_j \cap e_a| \leq 1$, and $e_j \cap V(P_1) = \emptyset$.}\label{db8}
\end{enumerate}
For each case, it suffices to find a subhypergraph of $J'$ which is a bicycle. Throughout we will make use of the following fact:
\begin{enumerate}[label=(B\arabic*),resume]
\item{For all $a \in [j-1] \setminus \{i\}$, $e_a$ is good in $E_1$.}\label{db9}
\end{enumerate}

{\bf Case 1: \ref{db3} holds.}
Clearly $e_0,\dots,e_i,e_j$ forms an overlapping pair/loose cycle with handle.

{\bf Case 2: \ref{db4} holds.}
We have precisely one of the following:
\begin{itemize}
\item{There exists $d \in [0,i]$ such that $x,y \in e_d$; then $e_j,e_a,e_d$ forms an overlapping pair with handle.}
\item{There does not exist $d \in [0,i]$ such that $x,y \in e_d$; without loss of generality suppose that $x \in e_0 \setminus e_1$. If $P_1$ is an overlapping pair, then $e_0,e_1,e_a,e_j$ forms a double overlapping pair. (Note that $|e_a \cap e_j| \leq k-2$ since otherwise $e_j$ would be $k$-bad in the edge order $E_1$.) If $P_1$ is a loose cycle, then let $d \in [i]$ be the smallest integer such that $y \in e_d$. If $d=i$, then $e_a,e_j,e_0,e_i$ forms a spoiled cycle; otherwise $e_a,e_j,e_0,\dots,e_d$ forms a spoiled cycle.}
\end{itemize}

{\bf Case 3: \ref{db5} holds.}
Let $P_2:=e_a,e_j$ and note that $|V(P_2) \cap V(P_1)|=1$. It follows that $e_0,\dots,e_i,e_a,e_j$ forms an overlapping pair/loose cycle to overlapping pair.

{\bf Case 4: \ref{db6} holds.}
Again let $P_2:=e_a,e_j$ and note that we have $V(P_2) \cap V(P_1)=\emptyset$. So consider a minimal path $P_3:=e_{f_1},\dots,e_{f_u}$ in $\{e_{i+1},\dots,e_{j-1}\} \setminus e_a$ from $X_1:=V(P_1)$ to $X_2:=V(P_2)$. By \ref{db9} $P_3$ is a loose path, moreover by definition of $P_3$, we have $|V(P_1) \cap V(P_3)|=1$ and $|V(P_2) \cap V(P_3)|=1$. Additionally if $u \geq 2$, then $e_{f_2} \cap V(P_1) = \emptyset$ and $e_{f_{u-1}} \cap V(P_2) = \emptyset$.
Thus $P_1$, $P_3$ and $P_2$ together form an overlapping pair/loose cycle to overlapping pair.

{\bf Case 5: \ref{db7} holds.}
Since $e_j$ has at least one old vertex which is not in $e_0,\dots,e_i$, we may consider a minimal path $P_2:=e_{f_1},\dots,e_{f_u}$ in $\{e_{i+1},\dots,e_{j-1}\}$ from $X_1:=V(P_1)$ to $X_2:=e_j \setminus V(P_1)$. First note by \ref{db9} that $P_2$ is a loose path. 
Now we have precisely one of the following:
\begin{itemize}
\item{We have $(e_{f_1} \cap e_j \cap V(P_1))\not= \emptyset$; then $P_3:=e_{f_1},\dots,e_{f_u},e_j$ forms a loose cycle. Now since $|V(P_1) \cap V(P_3)|=1$, $P_1$ and $P_3$ together form an overlapping pair/loose cycle to loose cycle.}
\item{There exists $d \in [0,i]$ such that $x=e_j \cap e_d$ and $y=e_{f_1} \cap e_d$ are distinct vertices; then $P_3:=e_d,e_{f_1},\dots,e_{f_u},e_j$ forms a loose cycle. If $P_1$ is an overlapping pair, then let $d' \in \{0,1\}$ be such that $d' \not=d$. Then $P_3$ together with $e_{d'}$ forms a loose cycle with handle. If $P_1$ is a loose cycle, then let $P_4$ be the loose path $e_{f_1},\dots,e_{f_u},e_j$ and define $z:=j$ if $u=1$ and $z:=f_2$ otherwise. Observe that $P_1$ and $P_4$ are vertex-disjoint except for $x= (e_j \setminus e_{f_u}) \cap e_d$ and $y=(e_{f_1} \setminus e_{z}) \cap e_d$, and thus together form a double loose cycle.}
\item{We have $x=e_j \cap V(P_1)$ and $y=e_{f_1} \cap V(P_1)$ are not together in any edge of $P_1$; without loss of generality suppose that $x \in e_0 \setminus e_1$. If $P_1$ is an overlapping pair, then $e_0,e_1,e_{f_1},\dots,e_{f_u},e_j$ forms a spoiled cycle. If $P_1$ is a loose cycle, then define $P_4,z$ as in the previous bullet point. Then observe that $P_1$ and $P_4$ are vertex-disjoint except for $x= (e_j \setminus e_{f_u}) \cap e_0$ and $y=(e_{f_1} \setminus e_{z}) \cap e_d$ for some $d \in [i]$, and thus as before, form a double loose cycle.
}
\end{itemize}

{\bf Case 6: \ref{db8} holds.}
Label the $q \geq 2$ old vertices of $e_j$ as $x_1,\dots,x_q$ and consider a minimal path $P_2:=e_{f_1},\dots,e_{f_u}$ in $\{e_{i+1},\dots,e_{j-1}\}$ from $X_1:=V(P_1)$ to $X_2:=\{x_1,\dots,x_q\}$. Let $x_a:=e_{f_u} \cap e_j$ and consider a minimal path $P_3:=e_{f_{u+1}}, \dots, e_{f_v}$ in $\{e_{i+1},\dots,e_{j-1}\} \setminus E(P_2)$ from $X_3:=X_2 \setminus x_a$ to $X_4:=V(P_1) \cup V(P_2)$. By \ref{db9} both $P_2$ and $P_3$ are loose paths. Now we have precisely one of the following:
\begin{itemize}
\item{We have $e_{f_v} \cap V(P_1) = \emptyset$; let $d \in [u]$ be the largest integer such that $e_{f_d} \cap e_{f_v} \not= \emptyset$. Then $P_4:=e_{f_d},\dots,e_{f_u},e_j,e_{f_{u+1}},\dots,e_{f_v}$ forms a loose cycle. If $d=1$, then since $|V(P_1) \cap V(P_4)|=1$, we have that $P_1$ and $P_4$ together form an overlapping pair/loose cycle to loose cycle. Otherwise let $P_5:=e_{f_1},\dots,e_{f_{d-1}}$. Then we have $|V(P_1) \cap V(P_5)|=1$, $|V(P_4) \cap V(P_5)|=1$ and $V(P_1) \cap V(P_4) = \emptyset$. Additionally, if $d \geq 3$, then we see that $e_{f_2} \cap V(P_1) = \emptyset$ and $e_{f_{d-2}} \cap V(P_4)= \emptyset$. Thus $P_1, P_4, P_5$ form an overlapping pair/loose cycle to loose cycle.}
\item{We have $e_{f_v} \cap V(P_1) \not= \emptyset$; the properties of the hypergraph $e_0,\dots,e_i,e_{f_1},\dots,e_{f_u},e_j,e_{f_{u+1}},\dots,e_{f_v}$ are identical to that the hypergraph $e_0,\dots,e_i,e_{f_1},\dots,e_{f_u},e_j$ found in Case 5 (up to the labelling of the edges), so a similar case study yields a bicycle.}
\end{itemize}
\end{proof}

For the remainer of the proof, we shall call a valid edge order which contains at least two bad edges a \emph{bad edge order}. If a hypergraph $J$ does not contain a bicycle, then by Claim~\ref{dlc2}, the existence of a bad edge order of $E(J')$ where $J'$ is a subhypergraph of $J$ is a contradiction. In the claims which follow we will always assume that $J$ does not contain a bicycle, and hence whenever some assumed condition of a case within a case analysis leads to the discovery of a bad edge order, we can immediately stop and move onto the next case.

\begin{claim}\label{dlc3}
Let $S$ be a hypergraph with $s$ edges, which is an overlapping pair, a loose cycle, a $(k,s/2,2)$-star with $s \geq 4$, a $(k,s,a)$-link with $s \geq 3$ and $a \in [\floor{k/(s-1)}]$, or a Pasch configuration.
Suppose $J$ is a connected hypergraph which does not contain a bicycle, and does contain $S$.
Then there exists an allowed edge order $E_1:=e_0,\dots,e_t$ such that
\begin{itemize}
\item[(i)]{$e_0,\dots,e_i$ forms an overlapping pair or loose cycle, $e_0,\dots,e_{s-1}$ forms $S$, and every edge $e_j$ for $j \in [s,t]$ is either good or $k$-bad;} 
\item[(ii)]{For all $j \in [s+1,t]$, if $e_j$ is $k$-bad, then either $e_{j-1}$ is also $k$-bad, or there exists a vertex $x \in e_j$ which is new in $e_{j-1}$.}
\end{itemize}
\end{claim}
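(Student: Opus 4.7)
My plan is to construct, for each admissible shape of $S$, an allowed edge order of $E(S)$ whose first $i+1$ edges form an overlapping pair or loose cycle and in which $S$ itself contains exactly one bad edge (all other edges of $S$ being initial, good, or $k$-bad), then extend to an allowed edge order $E_1$ of $E(J)$ by appending the remaining edges of $J$ in any order, which is possible because $J$ is connected. For an overlapping pair or loose cycle $S$ the defining edge sequence works directly. For a $(k,s/2,2)$-star with star-pairs $(e_i,f(e_i))_{i=1}^{s/2}$ use the order $e_1,f(e_1),e_2,f(e_2),\ldots,e_{s/2},f(e_{s/2})$: then $e_1,f(e_1)$ is the overlapping pair (so $i=1$), $f(e_1)$ is the unique bad edge, each $e_i$ with $i\geq 2$ is good (sharing only the first central vertex with earlier edges), and each $f(e_i)$ with $i\geq 2$ is $k$-bad. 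For a $(k,s,a)$-link with $s\geq 3$ use $e_1,e_2,\ldots,e_s$: here $e_2$ is bad and forms an overlapping pair with $e_1$, and every subsequent edge is $k$-bad because $V(S)=V(e_1\cup e_2)$ has only $k+a$ vertices. For a Pasch configuration, any three edges form a loose $3$-cycle (each pair sharing a different single vertex) while the fourth edge meets each of them in one vertex and is therefore $k$-bad.

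To establish (i), suppose for contradiction that some $e_j$ with $j\in[s,t]$ is bad in $E_1$. Delete every $k$-bad edge of $E_1$ to obtain a subsequence $E_1'$. A vertex cannot first appear in a $k$-bad edge (which by definition contains no new vertex), so for every retained edge the set of earlier edges containing each of its vertices is unchanged upon deletion; hence the old/new classification of each retained edge is preserved and $E_1'$ is a valid edge order of the sub-hypergraph $J^\star\subseteq J$ of non-$k$-bad edges. In particular both the unique bad edge of $S$ and $e_j$ remain bad in $E_1'$. Let $K$ be the connected component of $J^\star$ containing $e_j$; following old-vertex witnesses of $e_j$ backwards through non-$k$-bad predecessors reaches $e_0$, so $e_0\in K$, and in each of the five cases the bad edge of $S$ is connected to $e_0$ in $J^\star$ through the edges of the initial overlapping pair or loose cycle. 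Thus the restriction of $E_1'$ to $K$ is a valid edge order of a connected subhypergraph of $J$ containing two distinct bad edges, which contradicts Claim~\ref{dlc2} together with the assumption that $J$ contains no bicycle.

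For (ii), begin from any edge order satisfying (i) and perform the following local swap: whenever $j\geq s+1$, $e_j$ is $k$-bad, $e_{j-1}$ is not $k$-bad, and no vertex of $e_j$ is new in $e_{j-1}$, transpose $e_{j-1}$ and $e_j$. The swap hypothesis implies every vertex of $e_j$ first appears strictly before $e_{j-1}$, so $e_j$ stays $k$-bad at its new position $j-1$; and no vertex of $e_{j-1}$ changes its old/new status when $e_{j-1}$ is moved to position $j$, so $e_{j-1}$ stays good, which together with the untouched first $s$ positions preserves property~(i). The sum of positions of $k$-bad edges strictly decreases at each swap, so the process terminates, and at termination the negation of the swap hypothesis is precisely condition~(ii).

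The main obstacle is the bookkeeping in the second paragraph: showing that $E_1'$ is genuinely a valid edge order of a \emph{connected} subhypergraph containing both the bad edge of $S$ and $e_j$. This is what allows a single uniform invocation of Claim~\ref{dlc2} to handle all five shapes of $S$ simultaneously, and it is the only place where the specific structure of $S$ enters the proof (via the final connectivity check linking the bad edge of $S$ to $e_0$).
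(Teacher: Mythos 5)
Your proof is correct and follows essentially the same strategy as the paper: first pick an allowed edge order that begins with the edges of $S$ (with the overlapping pair or loose cycle at the very front), then for (i) argue that a second bad edge would, after deleting $k$-bad edges, produce a valid edge order of a connected subhypergraph with two bad edges and hence a bicycle via Claim~\ref{dlc2}; then for (ii) repeatedly transpose an adjacent $k$-bad/good pair that violates the condition. The paper invokes Claim~\ref{dlc1}(ii) to produce the initial order and deletes $k$-bad edges only from the truncated prefix $e_0,\dots,e_j$ (which makes the resulting order automatically connected), whereas you construct the order of $E(S)$ explicitly case-by-case and delete $k$-bad edges from the entire order, so you need the extra component argument tracing old-vertex witnesses from $e_j$ back to $e_0$. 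Both work; the paper's truncation is a slightly lighter route to connectivity, while your explicit termination argument (decreasing sum of $k$-bad positions) makes precise a step the paper only asserts. One very small wording issue: the phrase ``the set of earlier edges containing each of its vertices is unchanged upon deletion'' is not literally true since deleted $k$-bad edges may contain the vertex; what is true, and what your argument actually uses, is that the \emph{earliest} edge containing each vertex is never $k$-bad and hence is retained, so old/new classifications survive.
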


\begin{proof}
For (i), by Claim~\ref{dlc1}(ii) we can assume that the edge order starts with the overlapping pair or loose cycle, followed by the rest of the edges of $S$, and that $e_i$ is bad. If there exists another bad edge $e_j$ for some $j \in [s,t]$, then the edge order $E_2$, found by deleting from $e_0,\dots,e_j$ all $k$-bad edges, is bad. Thus for all $j \in [s,t]$, $e_j$ must either be good or $k$-bad. 

For (ii) suppose that $E_1=e_0,\dots,e_t$ does not satisfy the property stated in (ii). Then we have the following:
\begin{enumerate}[label=(C\arabic*),resume]
\item{There exists $j \in [s+1,t]$ such that $e_{j-1}$ is good and $e_j$ is $k$-bad, and all vertices in $e_j$ appeared in the edge order before $e_{j-1}$.}\label{dc1}
\end{enumerate} 
Now consider the edge order $E_2:=e_0,\dots,e_{j-2},e_j,e_{j-1},e_{j+1},\dots,e_t$. In this order, $e_j$ is still $k$-bad and $e_{j-1}$ is still good; moreover $E_2$ still starts with the overlapping pair or loose cycle. Hence by continuously performing swaps whenever such a pair $e_{j-1}$ and $e_j$ exists (satisfying \ref{dc1}), we eventually reach an edge order $E_p$ where no such pair exists. Thus in the final edge order $E_p$ the property stated in (ii) holds.
\end{proof}

\begin{claim}\label{dlc4}
Let $S$ be a hypergraph with $s$ edges, which is an overlapping pair, a loose cycle, a $(k,s/2,2)$-star with $s \geq 4$, a $(k,s,a)$-link with $s \geq 3$ and $a \in [\floor{k/(s-1)}]$, or a Pasch configuration.
Suppose $J$ is a connected hypergraph which does not contain a bicycle, and does contain $S$. Finally suppose $E_1:=e_0,\dots,e_t$ is the allowed edge order of $E(J)$ guaranteed by Claim~\ref{dlc3}, which starts with the edges of $S$, in particular with the overlapping pair or loose cycle $P_1:=e_0,\dots,e_i$. Suppose that $E_1$ contains at least one $k$-bad edge amongst the edges $e_s,\dots,e_t$. Then we have precisely one of the following:
\begin{itemize}
\item[(i)]{We have that $e_0,e_1,e_{j-1},e_j$ forms a $(k,2,2)$-star for some $j \in [s+1,t]$. Moreover either $S$ is an overlapping pair, or $S$ is a $(k,s/2,2)$-star with $s \geq 4$ and $e_0,\dots,e_{s-1},e_{j-1},e_j$ forms a $(k,s/2+1,2)$-star.}
\item[(ii)]{We have that $e_0,e_1,e_s$ forms a $(k,3,a)$-link, where $a=|e_0 \setminus e_1|$. Moreover either $S$ is an overlapping pair, or $S$ is a $(k,s,a)$-link with $s \geq 3$ and $e_0,\dots,e_s$ forms a $(k,s+1,a)$-link.}
\item[(iii)]{We have that $S$ is a loose cycle with three edges, $k=3$, and $e_0,e_1,e_2,e_3$ forms a Pasch configuration.}
\end{itemize}
\end{claim}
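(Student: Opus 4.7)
The plan is to let $e_j$ be the smallest-indexed $k$-bad edge in $\{e_s,\dots,e_t\}$, which exists by hypothesis. By the minimality of $j$ and Claim~\ref{dlc3}(i), the edges $e_s,\dots,e_{j-1}$ are all good. Throughout, the main leverage comes from the contrapositive of Claim~\ref{dlc2}: since $J$ contains no bicycle, no connected subhypergraph of $J$ admits a valid edge order with two or more bad edges. I plan to use this repeatedly to rule out alternative intersection patterns by exhibiting an explicit valid edge order with at least two bad edges whenever the structure deviates from (i)--(iii).

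The first step is to understand the local geometry around $e_j$. Suppose $j>s$. Then Claim~\ref{dlc3}(ii) yields a vertex $x\in e_j$ that is new in $e_{j-1}$, and $e_{j-1}$ has a unique old vertex $y$ in $E_1$. Setting $a:=|\{v\in e_j : v\text{ is new in }e_{j-1}\text{ under }E_1\}|\geq 1$, I consider the swapped edge order $E_2$ which exchanges $e_{j-1}$ and $e_j$. A direct count shows that in $E_2$ the edge $e_j$ has $k-a$ old vertices and $e_{j-1}$ has $a+1$ old vertices. If $1\leq a\leq k-2$, both edges are bad in $E_2$, and together with the bad edge $e_i$ inside $P_1$ and a suitable connecting path this produces a valid edge order on a connected subhypergraph with at least two bad edges, contradicting the no-bicycle assumption. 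Hence $a=k-1$, which forces $|e_{j-1}\cap e_j|=k-1$ and $y\notin e_j$; equivalently, $\{e_{j-1},e_j\}$ is an overlapping pair sharing $k-1$ vertices. When $j=s$ the edge $e_s$ has all its vertices in $V(S)$, and an analogous swap within $S$ (moving $e_s$ earlier past $e_{s-1}$) plays the role of the swap above.

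With this local rigidity in hand, I would carry out a case analysis on the shape of $S$. When $S=\{e_0,e_1\}$ is an overlapping pair, the vertex $y$ must lie in $e_0\cup e_1$ and the vertex $z:=e_j\setminus e_{j-1}$ must also lie in an earlier edge; tracking the resulting intersection pattern shows that either $y\in e_0$ and $z\in e_1$ (or symmetrically), giving case (i) with $\{e_0,e_1,e_{j-1},e_j\}$ a $(k,2,2)$-star, or else $j=s$ and the interaction of $e_s$ with $e_0,e_1$ directly forms a $(k,3,a)$-link with $a=|e_0\setminus e_1|$, giving case (ii). When $S$ is already a $(k,s/2,2)$-star or a $(k,s,a)$-link, the same local rigidity forces the unique extension respectively to a $(k,s/2+1,2)$-star or a $(k,s+1,a)$-link, yielding the ``moreover'' clauses of (i) and (ii). When $S$ is a loose $3$-cycle with $k=3$, the analysis forces $j=s$ and the four edges together form a Pasch configuration, giving case (iii). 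Longer loose cycles and all other shapes of $S$ would be ruled out by constructing the explicit bicycles they force.

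The main obstacle I anticipate is the case analysis together with the bookkeeping in the swap argument. The swapped order $E_2$ is not automatically valid on all of $J$, since $S$ may already contain $k$-bad edges (as happens in $(k,s/2,2)$-stars); to apply Claim~\ref{dlc2} one must restrict to a carefully chosen connected subhypergraph that retains $e_i$, a connecting path, and the newly bad pair $\{e_{j-1},e_j\}$ but discards $S$'s $k$-bad edges. Verifying that this subhypergraph is connected and that the induced edge order is indeed valid, across each of the sub-cases arising from the shape of $S$, is where most of the work lies.
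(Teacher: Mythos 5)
Your overall skeleton is the same as the paper's: isolate the first $k$-bad edge $e_j$ among $e_s,\dots,e_t$, use Claim~\ref{dlc3}(ii) to tie $e_j$ to $e_{j-1}$, and kill every deviant configuration by exhibiting a valid edge order of a connected subhypergraph with two bad edges, contradicting Claim~\ref{dlc2}. Your swap argument for $j>s$ is correct and is a genuine (mild) shortcut for part of the paper's Case 1: since a $k$-bad edge contains no new vertices, deleting the $k$-bad edges of $S$ from the swapped order changes neither the old-vertex counts of retained edges nor connectivity, so for $1\le a\le k-2$ the retained order is valid with the two bad edges $e_{j}$ and $e_{j-1}$; hence $a=k-1$ and $|e_{j-1}\cap e_j|=k-1$. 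The paper reaches the same rigidity instead through an intersection-pattern case analysis plus Claim~\ref{dlc3}(ii) to pin the partner edge down to $e_{j-1}$.

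However, almost everything after that step --- which is where the paper's proof actually lives --- is asserted rather than argued, and at least two assertions do not follow from the swap rigidity. First, for $S$ an overlapping pair you state that $y$ (the old vertex of $e_{j-1}$) must lie in $e_0\cup e_1$; this is only true after ruling out attachment of the pair $\{e_{j-1},e_j\}$ to $P_1$ through intermediate good edges, which needs the minimal-path construction of an overlapping pair/loose cycle to overlapping pair. One must also exclude: $y$ and $z:=e_j\setminus e_{j-1}$ lying in a common edge of $P_1$ (handle/bad order), $P_1$ being a loose cycle (spoiled or double loose cycle), and, crucially, $|e_0\cap e_1|\le k-2$. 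Your inference ``$y\in e_0$ and $z\in e_1$, giving case (i)'' skips this last point: that configuration is a $(k,2,2)$-star only when $|e_0\cap e_1|=k-1$ and $y,z$ are the tip vertices; otherwise $e_0,e_1,e_{j-1},e_j$ is a double overlapping pair, i.e.\ a bicycle, and the argument must detect this to force the star structure (and likewise to exclude $S$ a link in case (i)). Second, the $j=s$ case is not an ``analogous swap'': the surviving structures there share only $k-a$ vertices pairwise (links) or single vertices (Pasch), and the paper needs different arguments --- the permutation argument showing that when $e_s\subseteq e_0\cup e_1$ every ordering of $e_0,e_1,e_s$ would otherwise be a bad edge order, hence a $(k,3,|e_0\setminus e_1|)$-link, the exclusion of $(k,s/2,2)$-stars at that stage, and the separate loose-cycle analysis with its several sub-cases yielding the Pasch configuration and $k=3$. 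So as written you have proved the local rigidity for $j>s$, but not the trichotomy (i)--(iii) nor the ``moreover'' clauses; the missing case analysis is substantive, not routine bookkeeping.
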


\begin{proof}
Suppose that $e_j$ is the first $k$-bad edge amongst the edges $e_s,\dots,e_t$ (so if $j>s$ then $e_s,\dots,e_{j-1}$ are good). 
Let $e_j:=\{x_1,\dots,x_k\}$ and let $e_{f_1},\dots,e_{f_k}$ be the respective edges in which each $x_i$ is new in $E_1$, noting that without loss of generality we have $f_1 \leq \cdots \leq f_k$.

We have precisely one of the following:
\begin{enumerate}[label=(D\arabic*)]
\item{We have $j \in [s+1,t]$;}\label{ddd1}
\item{We have $j=s$ and $P_1=e_0,e_1$ is an overlapping pair;}\label{ddd2}
\item{We have $j=s$ and $P_1=e_0,\dots,e_i$ is a loose cycle.}\label{ddd3}
\end{enumerate}
We will go through each of these cases in turn and show that Claims~\ref{dlc4}(i), (ii) and (iii) hold respectively.

{\bf Case 1: \ref{ddd1} holds.}
Without loss of generality, we have precisely one of the following:
\begin{enumerate}[label=(D\arabic*),resume]
\item{We have $2 \leq |e_j \cap V(P_1)| \leq k-1$;}\label{dd1}
\item{We have $|e_j \cap V(P_1)|  \leq 1$ and for all $a \in [i+1,j-1]$ we have $|e_j \cap e_a| \leq 1$;}\label{dd2}
\item{There exists $a \in [i+1,j-1]$ such that $|e_j \cap e_a| \geq 2$ and $|(e_j \cup e_a) \cap V(P_1)| \leq 1$;}\label{dd3}
\item{There exists $a \in [i+1,j-1]$ such that $|e_j \cap e_a| \geq 2$ and $x=e_j \cap V(P_1)$ and $y=e_a \cap V(P_1)$ are distinct vertices.}\label{dd4}
\end{enumerate}

{\bf Case 1a: \ref{dd1} holds.} 
We have that $e_0,\dots,e_i,e_a$ forms an overlapping pair/loose cycle with handle, a contradiction to $J$ not containing a bicycle. 

{\bf Case 1b: \ref{dd2} holds.}
Since we have $|V(P_1) \cap e_j| \leq 1$ and each $e_a$ for $a \in [i+1,j-1]$ is good in $E_1$, we have
\begin{enumerate}[label=(D\arabic*),resume]
\item{$f_2 >i$;}\label{dd5}
\item{$f_1 < \cdots < f_k$.}\label{dd6}
\end{enumerate}
Then consider the edge orders $E_2:=e_0,\dots,e_{f_2},e_j$ and $E_3$, which is formed by deleting from $E_2$ each edge which is $k$-bad. We will show that $E_3$ is a bad edge order. 
First note that clearly $E_3$ is a valid edge order since all $k$-bad edges were deleted. 
By \ref{dd5}, $E_2$ and hence also $E_3$ both start with the edges of $P_1$, so have at least one bad edge. Further $e_j$ is bad in $E_2$ since precisely two of the vertices in $e_j$ are old in $E_2$, namely $x_1$ and $x_2$, which appear in $e_{f_1}$ and $e_{f_2}$ respectively. Since $e_{f_1}$ and $e_{f_2}$ both contain a new vertex, they are not $k$-bad, and so are both contained in $E_3$. Thus $e_j$ is also bad in $E_3$, and so $E_3$ is indeed a bad edge order.

{\bf Case 1c: \ref{dd3} holds.}
Let $P_2:=e_a,e_j$ and note that if $|V(P_1) \cap V(P_2)|=1$, then $V(P_1)$ and $V(P_2)$ together form an overlapping pair/loose cycle to overlapping pair, a contradiction to $J$ not containing a bicycle. Otherwise we have $V(P_1) \cap V(P_2) = \emptyset$, so consider a minimal path $P_3:=e_{g_1},\dots,e_{g_u}$ in $\{e_d: d\in [i+1,j-1], d \not= a, e_d$ is good in $E_1\}$
from $X_1:=V(P_1)$ to $X_2:=V(P_2)$. By the choice of where the edges in $P_3$ are selected from, $P_3$ is a loose path. Additionally we have $|V(P_1) \cap V(P_3)|=1$, $|V(P_2) \cap V(P_3)|=1$, and if $u \geq 2$, then $V(P_1) \cap e_{g_2} = \emptyset$ and $V(P_2) \cap e_{g_{u-1}} = \emptyset$. Thus $P_1,P_2,P_3$ form an overlapping pair/loose cycle to overlapping pair, a contradiction to $J$ not containing a bicycle. 

{\bf Case 1d: \ref{dd4} holds.}
First suppose that $P_1$ is a loose cycle and without loss of generality that $x \in e_0 \setminus e_i$. If $y \in e_0$, then let $E_2:=e_a,e_j,e_0$. If $y \in e_i \setminus e_0$, then let $E_2:=e_a,e_j,e_0,e_i$. Otherwise let $E_2:=e_a,e_j,e_0,\dots,e_i$. For each case $e_j$ and the last edge are both bad edges in $E_2$. Further it is easy to see that in each case, $E_2$ is valid, and thus $E_2$ is bad.

Now suppose that $P_1$ is an overlapping pair. If $x$ and $y$ are both in $e_d$ for $d=0$ or $d=1$, then $e_j,e_a,e_d$ is a bad edge order. So suppose without loss of generality that $x \in e_1 \setminus e_0$, and $y \in e_0 \setminus e_1$. If $|e_j \cap e_a| \leq k-2$, then $e_0,e_1,e_j,e_a$ forms a double overlapping pair. Similarly if $|e_0 \cap e_1| \leq k-2$, then $e_j,e_a,e_0,e_1$ forms a double overlapping pair. Both would contradict $J$ not containing a bicycle, and thus we have $|e_j \cap e_a| = |e_0 \cap e_1| = k-1$. Hence we also have $x=e_1 \setminus e_0=e_j \setminus e_a$ and $y=e_0 \setminus e_1=e_a \setminus e_j$ and so in particular, we have that $e_0,e_1,e_j,e_a$ forms a $(k,2,2)$-star. \\

The conclusion of our case analysis is that there exists $a \in [2,j-1]$ such that $e_0,e_1,e_a,e_j$ forms a $(k,2,2)$-star and that $P_1$ is an overlapping pair where $x=e_1 \setminus e_0$ and $y=e_0 \setminus e_1$.

Since $E_1$ is the edge order obtained from Claim~\ref{dlc3}, we have $e_j$ comes immediately after the edge for which the last of the vertices of $e_j$ are new, or following another $k$-bad edge. Therefore, since $e_{j-1}$ is not $k$-bad and $e_j \subseteq (e_0 \cup e_1 \cup e_a)$, we conclude that $a=j-1$. 

Note that $S$ cannot be a loose cycle or Pasch configuration, since $P_1$ is an overlapping pair. If $S$ is a $(k,s,d)$-link for some $s \geq 3$, then we have $e_2 \cap (e_{j-1} \cup e_j) = \{x,y\}$ and so the edge order $e_{j-1},e_j,e_2$ is bad. If $S$ is an overlapping pair, then there is nothing further to prove. Finally if $S$ is a $(k,s/2,2)$-star, then clearly $e_0,\dots,e_{s-1},e_{j-1},e_j$ forms a $(k,s/2+1,2)$-star (with central vertices $x$ and $y$). \\

{\bf Case 2: \ref{ddd2} holds.}
First note that $S$ cannot be a loose cycle or Pasch configuration, since $P_1$ is an overlapping pair. We have precisely one of the following:
\begin{enumerate}[label=(D\arabic*),resume]
\item{We have $|e_s \cap (e_0 \cup e_1)|=k$;}\label{dde1}
\item{We have $2 \leq |e_s \cap (e_0 \cup e_1)| \leq k-1$;}\label{dde2}
\item{We have $|e_s \cap (e_0 \cup e_1)| \leq 1$.}\label{dde3}
\end{enumerate}

{\bf Case 2a: \ref{dde1} holds.} 
First suppose that $|e_s \cap e_1| \leq 1$. Since $e_0,e_1$ is an overlapping pair, we have $$k=|e_s|=|e_s \cap e_1| + |e_s \cap (e_0 \setminus e_1) | \leq k-1,$$ a contradiction, and hence we must have $|e_s \cap e_1| \geq 2$. Similarly $|e_s \cap e_0| \geq 2$. Thus any permutation of the edges $e_0,e_1,e_s$ must have that the second edge is bad and the third edge is bad or $k$-bad. In order to not obtain a bad edge oder, we must have that the third edge is $k$-bad in all of these permutations. Thus by definition $e_0,e_1,e_s$ forms a $(k,3,|e_0 \setminus e_1|)$-link. If $S$ is an overlapping pair, then there is nothing further to prove. If $S$ is a $(k,s/2,2)$-star for some $s \geq 4$, then without loss of generality we have $x=e_0 \setminus e_1=e_2 \setminus e_3$ and $y=e_1 \setminus e_0 = e_3 \setminus e_2$. But then since $x,y \in e_s$, the edge order $e_2,e_3,e_s$ is bad. Finally suppose that $S$ is a $(k,s,a)$-link. Then for all $d,d' \in [0,s-1]$ with $d<d'$, we have that $e_d, e_{d'}$ forms an overlapping pair, and $|e_d \setminus e_d'|=|e_0 \setminus e_1|=a$. By repeating the argument above for the permutations of $e_0,e_1,e_s$, we see that any permutation of $e_d,e_{d'},e_s$ must have that the second edge is bad and the third is $k$-bad. Thus $e_0,\dots,e_s$ forms a $(k,s+1,|e_0 \setminus e_1|)$-link. 

{\bf Case 2b: \ref{dde2} holds.}
We have that $e_0,e_1,e_s$ forms an overlapping pair with handle, a contradiction to $J$ not containing a bicycle.

{\bf Case 2c: \ref{dde3} holds.}
First note that since $e_s$ contains vertices outside of $e_0 \cup e_1$, we have that $S$ cannot be an overlapping pair or a  $(k,s,|e_0 \setminus e_1|)$-link, and thus $S$ must be a $(k,s/2,2)$-star. If there exists $i,j$ such that $|e_i \cap e_j| =k-1$ and $|e_s \cap (e_i \cup e_j)| \geq 2$, then repeat the argument from Case 2a or 2b with $e_0$ and $e_1$ replaced by $e_i$ and $e_j$. For the remaining case we have that for all $i,j$ such that $|e_i \cap e_j| =k-1$, we have  $|e_s \cap (e_i \cup e_j)| \leq 1$ and in particular, the central vertices of the two stars are not in $e_s$. Now without loss of generality, suppose $e_2$ and $e_3$ is an overlapping pair, $|e_s \cap (e_0 \cap e_1)| =1$ and $|e_s \cap (e_2 \cap e_3)| =1$. Then the edge order $e_0,e_1,e_2,e_s$ is a bad edge order. \\

{\bf Case 3: \ref{ddd3} holds.}
Since $S$ contains a loose cycle, $S$ cannot be an overlapping pair, $(k,s/2,2)$-star or a $(k,s,a)$-link, and hence $S$ is a loose cycle or Pasch configuration. If $S$ is the latter, then we have $j=4$, and by the symmetry of the Pasch configuration, we have without loss of generality that $|e_0 \cap e_4|=2$ and $|e_1 \cap (e_0 \cup e_4)|=2$. Hence $e_0,e_4,e_1$ is a bad edge order. Thus $S$ must be a loose cycle and $j=i+1$. Without loss of generality we have precisely one of the following:
\begin{itemize}
\item{$|e_0 \cap e_{i+1}| \geq 2$ and $(e_i \setminus e_0) \cap e_{i+1} = \emptyset$; let $E_2:=e_{i+1},e_0,\dots,e_i$. 
For all $a \in [i]$, we have $|(e_a \setminus e_{a-1}) \cap e_{i+1}| \leq k-2$, and thus
$$|e_a \cap (e_{a-1} \cup e_{i+1})| \leq |e_{a-1} \cap e_a| + | (e_a \setminus e_{a-1}) \cap e_{i+1}| \leq k-1.$$ Thus $E_2$ is a valid edge order. Further $e_0$ is bad in $E_2$. Since $e_i$ completes the cycle $e_0,\dots,e_i$, it is also bad in $E_2$, and thus $E_2$ is a bad edge order.}
\item{$|e_0 \cap e_{i+1}| \geq 2$ and $(e_i \setminus e_0) \cap e_{i+1} \not= \emptyset$; let $E_2:=e_{i+1},e_0,e_i$. Here we have $$2 \leq |e_i \cap (e_{i+1} \cup e_0)| \leq k-1$$ and so $E_2$ is a bad edge order.}
\item{For all $a \in [0,i]$, we have $|e_{i+1} \cap e_a| \leq 1$ and $(e_i \setminus (e_0 \cup e_{i-1}) ) \cap e_{i+1} = \emptyset$; We have that \ref{dd6} holds, and thus $E_2:=e_0,\dots,e_{f_2},e_{i+1},e_{f_2+1},\dots,e_i$ is a bad edge order.}
\item{For all $a \in [0,i]$, we have $|e_{i+1} \cap e_a| \leq 1$, $|(e_a \setminus \{e_d: d \in [0,i], d \not =a\}) \cap e_{i+1}| =1$ and $k \geq 4$; Then $E_2:=e_0,e_1,e_{i+1},e_2$ is a bad edge order.}
\item{For all $a \in [0,i]$, we have $|e_{i+1} \cap e_a| \leq 1$, $|(e_a \setminus \{e_d: d \in [0,i], d \not =a\}) \cap e_{i+1}| =1$ and $k=3$. Then we have $i=2$ and $e_0,e_1,e_2,e_3$ forms a Pasch configuration.}
\end{itemize}
Only the last case does not produce a bad edge order, and thus we have that $e_0,e_1,e_2,e_3$ forms a Pasch configuration, as required.
\end{proof}

We are now ready to prove our two deterministic lemmas. 

\begin{proof}[Proof of Lemma~\ref{dl1}]
If there exists a simple edge order of $E(H')$, then we have (i), so suppose that any edge order of $E(H')$ contains at least one bad or $k$-bad edge. Now by Claim~\ref{dlc1} we may assume that $E_1:=e_0,\dots,e_t$ is an allowed edge order of $E(H')$ which starts with an overlapping pair or loose cycle $P_1:=e_0,\dots,e_i$. Then using Claim~\ref{dlc3} applied with $S:=P_1$, we have that every edge $e_{i+1},\dots,e_t$ is either $k$-bad or good.
If all of these edges are good, then we have (ii) or (iii), so are done. So assume that there is at least one $k$-bad edge. Then using Claim~\ref{dlc4} applied with $S:=P_1$, we have that $H'$ contains
a $(k,2,2)$-star, a $(k,3,|e_0 \setminus e_1|)$-link or a Pasch configuration. We can now repeatedly use Claims~\ref{dlc3} and~\ref{dlc4} as follows:
\begin{itemize}
\item[(a)]{Let $S$ be the $(k,p,2)$-star, a $(k,p+1,|e_0 \setminus e_1|)$-link or a Pasch configuration found previously (where $p \geq 2$). By Claim~\ref{dlc3}, there exists an allowed edge order of $E(H')$ which starts with all of the edges of $S$. If there are no further $k$-bad edges, then we have (iv), (v) or (vi). If there are, then move to step (b).}
\item[(b)]{By Claim~\ref{dlc4}, either $S$ was a $(k,p,2)$-star and $H'$ contains a $(k,p+1,2)$-star, or $S$ was a $(k,p+1,|e_0 \setminus e_1|)$-link and $H'$ contains a $(k,p+2,|e_0 \setminus e_1|)$-link. Now return to step (a).}
\end{itemize}
Since $H'$ is a finite hypergraph, this process must eventually stop, and hence we have (iv), (v) or (vi).
\end{proof}

For the proof of Lemma~\ref{dl2}, we simply find an explicit strategy for Breaker to win the game played on $H'$. 

\begin{proof}[Proof of Lemma~\ref{dl2}]
By Lemma~\ref{dl1}, $H'$ may contain a subhypergraph $S$ for which there exists an edge order which starts with all of the edges of $S$, and all subsequent edges are good: The subhypergraph $S$ if it exists must be

{\bf Case 1:} A $(k,u,2)$-star for some integer $u \geq 2$;

{\bf Case 2:} A $(k,u,a)$-link for some $u,a \in \mathbb{N}$ with $u \geq 3$ and $a \leq \floor{k/(u-1)}$;

{\bf Case 3:} A Pasch configuration;

{\bf Case 4:} A loose cycle;

{\bf Case 5:} An overlapping pair.

Let the edges of $S$ be $e_0,\dots,e_{s-1}$, and the rest of the good edges $e_{s},\dots,e_t$. (If $S$ does not exist, then set $s=0$.) 

Breaker uses the following strategy. 

\begin{itemize}
\item{If Maker selects a vertex in $S$, then Breaker does the following, corresponding to the cases above for what $S$ could be.

{\bf Case 1:} There are $2u$ edges; suppose without loss of generality that they are labelled so that $e_i$ and $e_{i+u}$ have intersection $k-1$ for each $i \in [0,u-1]$. Suppose Maker has selected a vertex in $e_j \cap e_{j+u}$ for $j \in [0,u-1]$. Then Breaker if he can, also selects such a vertex. Otherwise he selects an arbitrary vertex.

{\bf Case 2:} Breaker selects an arbitrary vertex in $S$ if he can. Otherwise he selects an arbitrary vertex.

{\bf Case 3:} If Maker has two out of three vertices from one of the edges of $S$, Breaker chooses the final vertex from this edge. Otherwise Breaker chooses an arbitrary vertex in $S$ if he can. If he cannot then he chooses an arbitrary vertex.

{\bf Case 4:} Assume without loss of generality that the edges are ordered cyclically $e_0,\dots,e_{s-1}$. If Maker has selected an element from $e_i \setminus e_{j}$ where $i \in [s-1]$ and $j=i-1$, or $i=0$ and $j=s-1$, then Breaker if he can, also selects such a vertex. Otherwise he selects an arbitrary vertex.

{\bf Case 5:} These two edges are $e_0$ and $e_1$. If Maker has selected an element from $e_0 \cap e_1$, then Breaker if he can, also selects such a vertex. Otherwise he selects an arbitrary vertex.}

\item{If Maker selects any other vertex, let $i$ be such that $e_i$ is the edge in which this vertex is new. If Breaker can, he also selects a vertex which is new in $e_i$. Otherwise he selects an arbitrary vertex.}
\end{itemize}

We must now show that at the end of the game, Maker has failed to claim every vertex of any edge in $H'$. First observe that Maker has failed to claim any of the edges $e_{s},\dots,e_t$. Let $i \in [s,t]$. In the edge order, $e_i$ is good, and so there exists at least $k-1 \geq 2$ vertices $x_i$ and $y_i$ which are new in $e_i$ (they do not appear in any edge $e_j$ for $j<i$). By part two of the strategy above, Maker cannot claim all of the vertices of $e_i$ since as soon as she tries to claim one of the at least two new vertices in $e_i$, Breaker will claim another new vertex in $e_i$. Thus if Maker has won the game, she must have claimed an edge from $S$. However, we will now run through each case, corresponding to the cases for $S$ in Breaker's strategy above, showing that Maker has not claimed such an edge.

{\bf Case 1:} Let $j \in [0,u-1]$ and suppose Maker is trying to claim $e_j$ or $e_{j+u}$. There are $k-1 \geq 2$ vertices in $e_j \cap e_{j+u}$ and so Maker cannot claim all of the vertices of $e_j$ or $e_{j+u}$ since as soon as she tries to claim one of the vertices which lie in $e_j \cap e_{j+u}$, Breaker will claim another vertex in $e_j \cap e_{j+u}$. Since all edges of $S$ are of this form, Maker cannot claim any edge of $S$.

{\bf Case 2:} Note that a $(k,u,a)$-link has at most $2k-2$ vertices. Hence by Breaker always claiming any vertex in $S$ whenever Maker does, he ensures that Maker can claim at most $\ceil{(2k-2)/2}=k-1$ of the vertices in $S$, therefore does not have enough to claim a full edge of $S$.

{\bf Case 3:} Breaker always tries to claims a vertex in $S$ if Maker does, hence Maker claims at most three of the six vertices in $S$. Note that any pair of vertices in $S$ lie together in at most one edge. Hence by Breaker selecting the third vertex of an edge if Maker has selected the first two, Maker is never able to claim all three vertices of an edge of $S$.

{\bf Case 4:} Suppose Maker is trying to claim $e_i$ for some $i \in [0,s-1]$. Let $j=i-1$ if $i\geq 1$, and let $j=s-1$ if $i=0$. There are $k-1 \geq 2$ vertices in $e_i \setminus e_j$ and so Maker cannot claim all of the vertices of $e_i$ since as soon as she tries to claim one of the vertices in $e_i \setminus e_j$, Breaker will claim another vertex in $e_i \setminus e_j$. Since $e_i$ was arbitrary, Maker cannot claim any edge of $S$.

{\bf Case 5:} There are at least two vertices in $e_0 \cap e_1$ and so Maker cannot claim all of the vertices in $e_0$ or $e_1$ since as soon as she tries to claim one of the vertices in $e_0 \cap e_1$, Breaker will claim another vertex in $e_0 \cap e_1$. 
\end{proof}

{\bf \ref{Q3} A probabilistic lemma.}
\begin{lemma}\label{pp1}
Given $B$ is an $\ell \times k$ matrix, if $\ell$ divides $k-1$, then with high probability $H$ does not contain a bicycle.
\end{lemma}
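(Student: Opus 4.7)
The plan is to apply the first moment method: for each of the four types of bicycles I bound the expected number of copies in $H$ and show that the total tends to zero, then conclude by Markov's inequality and a union bound.

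Fix an abstract bicycle $S$ with $v_S$ vertices and $e_S$ edges, and let $X_S$ denote the number of copies of $S$ in $H$. Let $N(S)$ count the maps $\phi:V(S)\to[n]$ (injective on every edge) such that each edge of $S$, under $\phi$, becomes a $k$-distinct solution to $Bx=b'$. Then $\ex[X_S]\le N(S)\,p^{v_S}$. To bound $N(S)$, I process the edges of $S$ in a valid edge order $f_1,\ldots,f_{e_S}$, which exists by Claim~\ref{dlc1}. For the $i$-th edge, write $W_i:=f_i\cap(f_1\cup\cdots\cup f_{i-1})$ and $r_i:=\rank(B_{\overline{W_i}})$; the number of ways to choose the $k-|W_i|$ new vertices of $f_i$ consistent with the equation $Bx=b'$ is at most $O(n^{k-|W_i|-r_i})$, so $N(S)\le C\,n^{v_S-\sum_i r_i}$ for some constant $C>0$.

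Using $p<c\,n^{-(k-1-\ell)/(k-1)}$ and the identity $\sum_i(k-|W_i|)=v_S$ (with $|W_1|=0$), one obtains
\[
\ex[X_S]\le C\,c^{v_S}\,n^{v_S\ell/(k-1)-\sum_i r_i}.
\]
The key step is to verify $\sum_i r_i>v_S\ell/(k-1)$ strictly for every bicycle $S$. Decompose the target as $v_S\ell/(k-1)=\sum_i\ell(k-|W_i|)/(k-1)$. For the first edge, $r_1=\ell$ while $\ell k/(k-1)=\ell+\ell/(k-1)$, producing a deficit of $\ell/(k-1)$. Each good edge ($|W_i|=1$) has $r_i=\ell$ and breaks even. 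For each bad edge ($2\le|W_i|\le k-1$), strict balance of $B$ yields $r_i>\ell(k-|W_i|)/(k-1)$. Writing $k-1=q\ell$ using the hypothesis $\ell\mid k-1$, the right hand side equals $(k-|W_i|)/q$; combined with $r_i\in\mathbb{Z}$, the strict inequality forces $r_i\ge\lfloor(k-|W_i|)/q\rfloor+1$, so the surplus $r_i-(k-|W_i|)/q$ is at least $1/q=\ell/(k-1)$. Since every bicycle has at least two bad edges in its valid edge order, the total surplus is at least $2\ell/(k-1)$, which beats the first-edge deficit of $\ell/(k-1)$ by $\ell/(k-1)$. Hence $\ex[X_S]=O(n^{-\ell/(k-1)})$.

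Finally, the family of bicycles is parametrized by finitely many bounded integer parameters (lengths of constituent overlapping pairs, loose cycles and loose paths, and sizes of the various overlaps), and the factor $c^{v_S}$ with $c$ small enough makes the sum over all bicycle types a convergent geometric series in $v_S$; hence the expected total number of bicycles in $H$ is $O(n^{-\ell/(k-1)})\to 0$, and Markov's inequality then gives that w.h.p.\ $H$ contains no bicycle. The main obstacle is obtaining the clean integer surplus: the hypothesis $\ell\mid k-1$ is exactly what upgrades the strict real inequality from the strictly balanced definition to an integer surplus of size $\ell/(k-1)$ per bad edge; without this divisibility the gap per bad edge could be strictly smaller, so that the two bad edges of a bicycle might not suffice to overcome the first-edge deficit.
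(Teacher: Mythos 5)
Your argument is correct and follows essentially the same route as the paper: a first–moment calculation, processing the edges of a bicycle in a valid edge order, and exploiting the fact that every bicycle has at least two bad edges. The key computation is identical — the initial edge contributes a deficit of $\ell/(k-1)$, good edges break even, and strict balance together with $\ell\mid k-1$ upgrades the real strict inequality to an integer inequality $\rank(B_{\overline W})\ge \ell(k-|W|)/(k-1)+\ell/(k-1)$, so each bad edge contributes a surplus of at least $\ell/(k-1)$; this is exactly the paper's inequality~(\ref{pf5}).

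One minor difference in bookkeeping: the paper explicitly enumerates the eight bicycle types on at most $\log n$ edges (whose number of ``drawings'' is then polylogarithmic) and separately kills long loose paths, whereas you handle arbitrary sizes in one sweep via the geometric decay from $c^{v_S}$. Both work, but be careful with your phrasing ``finitely many \emph{bounded} integer parameters'' — the lengths of the constituent loose paths/cycles are unbounded, and it is precisely the $c^{v_S}$ factor (equivalently the paper's $(kc)^u$ with $c<1/(ke^2)$) that tames the resulting polynomial-in-$v_S$ count of isomorphism types. Two further small imprecisions that do not affect the argument: the maps $\phi$ you count should be injective on all of $V(S)$, not just on each edge, for $\ex[X_S]\le N(S)p^{v_S}$ to be the correct bound; and the constant $C$ in $N(S)\le Cn^{v_S-\sum_i r_i}$ really grows like $C^{e_S}$ (each edge contributes an $O(1)$ factor from the choice of $W_i$ and the assignment of old vertices to coordinates), but since $e_S<v_S$ this is absorbed into $(Cc)^{v_S}$ after shrinking $c$.
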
 

\begin{proof}
Let $R_{b_t}$ be a random variable counting the number of bicycles which are in $H$. By Proposition~\ref{Markov}, it suffices to show that the expectation of $R_{b_t}$ converges to zero as $n$ tends to infinity.

We let $R_{b_1},\dots,R_{b_8}$ respectively count the number of hypergraphs $J:=e_{f_1},\dots,e_{f_u}$ in $H$ with $u \leq \log n$, for which $J$ corresponds to
\begin{itemize}
\item[(i)]{a spoiled cycle;}
\item[(ii)]{a double overlapping pair;}
\item[(iii)]{a double loose cycle;}
\item[(iv)]{an overlapping pair with handle;}
\item[(v)]{a loose cycle with handle;}
\item[(vi)]{a loose cycle to loose cycle;}
\item[(vii)]{an overlapping pair to overlapping pair;}
\item[(viii)]{an overlapping pair to loose cycle.}
\end{itemize}
Note that each of these hypergraphs contain a loose path of length $u-2$; hence let $R_{b_9}$ count the number of loose paths $e_{f_1},\dots,e_{f_u}$ in $H$ with $u \geq (\log n)-1$. Then we have $R_{b_t} \leq \sum_{i=1}^9 R_{b_i}$ and hence it suffices to show $\ex (R_{b_i})=o(1)$ for each $i$. The cases for $i=1,5,9$ were covered by R\"odl and Ruci\'nski's proof, however we will repeat them here for clarity.

Suppose that $J:=e_{f_1},\dots,e_{f_u}$ is the valid edge order corresponding to one of the nine cases listed above given by the definitions earlier. When calculating an upper bound on the expected number of copies of some hypergraph $J$ in $H$, we need to first bound the number of ways to \emph{draw} $J$ (i.e.  bound the number of non-isomorphic hypergraphs which $J$ could be - e.g. for a spoiled cycle $e_{f_1},\dots,e_{f_u}$ we need to choose the size of the intersection $e_{f_1} \cap e_{f_2}$, and also the number of edges $u$). Second, we should consider $J$ as being drawn, and bound the number of ways to pick elements from $[n]_p$ to represent each vertex of $J$. Thus we are interested in bounding the number of ways of drawing each $J$ and also the number of ways of choosing representatives from $[n]_p$ for each vertex of $J$.

Each hypergraph $J$ which we wish to count can be written as a union of at most three hypergraphs $P_1,P_2,P_3$, for which each of these are one of an overlapping pair, loose cycle, or loose path.
Further, if $P_2$ and $P_3$ exist, we have $|V(P_1) \cap V(P_2)| \leq k-1$, $|V(P_2) \cap V(P_3)| \leq k-1$ and $|V(P_1) \cap V(P_3)|=\emptyset$. Thus for each $i \in [2]$ we have at most $(|V(P_i)| \cdot |V(P_{i+1})|)^{k-1}$ choices for how to make $P_i$ and $P_{i+1}$ intersect. There is only one way to draw a loose cycle or loose path, and at most $k-2$ ways to draw an overlapping pair. Further for each $J$ in (i)--(viii), we have $|V(J)| \leq k \log n$. Thus the total number of ways of drawing each $J$ in (i)--(viii) is at most polylogarithmic in $n$. 

Recall that $B$ is a strictly balanced matrix of dimension $\ell \times k$, and hence for every $W \subseteq [k]$ for which $2 \leq |W| < k$ we have
\begin{align}\label{pf1}
\frac{|W|-1}{|W|-1+\rank (B_{\overline{W}})-\ell} < \frac{k-1}{k-1-\ell}.
\end{align}
Additionally we have
$m(B)=\frac{k-1}{k-1-\ell}$.
Now let $p < cn^{-1/m(B)}=cn^{-(k-\ell-1)/(k-1)}$ where we choose $c$ to be a constant satisfying
\begin{align}\label{pf3}
c< 1/(ke^2).
\end{align}
Given $i \in [u]$ and $J=e_{f_1},\dots,e_{f_u}$ has been drawn, we wish to bound the expected number of ways of picking $e_{f_i}$ to be an edge with $q$ old elements. Such an edge represents a solution $x$ to $Bx=b'$, where $q$ of the $x_i$ have already been chosen. Let these indices be $W$; we are now attempting to solve $B_{\overline{W}} x'=b''$ for some vector $b''$ of $k-q$ elements. Note also we must choose one of the $q!$ possible assignments of the $q$ indices in $W$ to the $q$ old elements. Thus the expected number of ways, $Y$, of picking the $k-q$ new vertices for $e_i$, satisfies
$$Y \leq \sum_{\stackrel{W \subseteq [k]}{|W|=q}} q! n^{k-q-\rank(B_{\overline{W}})} p^{k-q}.$$
We wish to bound $Y$. By rearranging the inequality given by (\ref{pf1}), we have (if $|W| \geq 2$)
$$\ell (k-|W|) - (k-1) \rank(B_{\overline{W}}) < 0.$$
In fact since all quantities above are integers and $\ell$ divides $k-1$, we must have 
\begin{align}\label{pf5}
\ell (k-|W|) - (k-1) \rank(B_{\overline{W}}) \leq -\ell.
\end{align}
Thus we have 
\begin{align}\label{pf7}
n^{k-|W|-\rank(B_{\overline{W}})} p^{k-|W|} \leq
\begin{cases}
c^{k} n^{\frac{\ell}{k-1}} &\text{if } |W|=0; \\
c^{k-1} &\text{if } |W|=1; \\
cn^{\frac{-\ell}{k-1}} &\text{if } 2 \leq |W| \leq k-1.
\end{cases}
\end{align}
(Note that here we used $\rank(B_{\overline{W}})=\ell$ if $|W|=1$; see Proposition 4.3 in~\cite{HST}.)

For each hypergraph $J$ in (i)--(viii), there is always precisely one initial edge, $u-3$ good edges, and two bad edges. Thus the number of choices we have for picking which element of $[n]_p$ to use for each vertex in $J$ has expectation which is at most:

\begin{align*}
& n^{k-\ell} p^k \left( \sum_{\stackrel{W \subseteq [k]}{|W|=1}} n^{k-|W|-\rank(B_{\overline{W}})} p^{k-|W|} \right)^{u-3} \left( \sum_{\stackrel{W \subseteq [k]}{2 \leq |W| \leq k-1}} |W|! n^{k-|W|-\rank(B_{\overline{W}})} p^{k-|W|} \right)^2 \\
\stackrel{(\ref{pf7})}{\leq} & (k!)^2 \cdot (kc)^u \cdot n^{-\ell/(k-1)}.
\end{align*}
We conclude that for each $i \in [1,8]$ we have $\ex(R_{b_i}) < O( n^{-\ell/(k-1)} \cdot \polylog (n) )=o(1)$.

Finally, note that a loose path with at most $n$ vertices clearly has at most $n$ edges. Further, again using Proposition 4.3 in~\cite{HST}, we have $k \geq \ell+2$. Thus we have
\begin{align*}
\ex(R_{b_9})  
\leq & \, O \left( \sum_{u \geq (\log n) -1}^{n} n^{k-\ell} p^k \prod_{i=1}^{u-1} \left( \sum_{\stackrel{W \subseteq [k]}{|W|=1}} n^{k-|W|-\rank(B_{\overline{W}})} p^{k-|W|} \right) \right) \\
\stackrel{(\ref{pf7})}{\leq} & \, O \left( n^{\frac{\ell}{k-1}} \sum_{u \geq (\log n) -1}^{n} (kc)^u \right) \stackrel{(\ref{pf3})}{=} o(1),
\end{align*}
as required.
\end{proof}

{\bf Putting the parts together.}
To reiterate the main points of the proof of Theorem~\ref{main}(ii), we finish by showing that it follows easily from the lemmas in each of the parts \ref{Q1}--\ref{Q3}.

\begin{proof}[Proof (summary) of Theorem~\ref{main}(ii)]
Let $A$ be a fixed integer-valued matrix of dimension $\ell' \times k'$ and $b$ a fixed integer-valued vector of dimension $\ell'$, such that the pair $(A,b)$ is irredundant, and $A$ is irredundant and satisfies $(*)$. In order to prove w.h.p. Breaker wins the $(A,b)$-game on $[n]_p$, by Proposition~\ref{strictred}, it suffices to show w.h.p. Breaker wins the $(B,b')$-game on $[n]_p$, where $(B,b')$ is the associated pair of $(A,b)$. We rephrase the problem to a game on the hypergraph $H:=H([n]_p,B,b')$. We then show that if a component $H'$ of $H$ 
does not contain a bicycle, then it satisfies certain conditions stated in Lemma~\ref{dl1}. Breaker wins the game played on such a component by Lemma~\ref{dl2}. Supposing that $B$ is an $\ell \times k$ matrix where $\ell$ divides $k-1$, then by Lemma~\ref{pp1}, w.h.p. $H$ (and therefore each component of $H$) does not contain a bicycle. Finally since Breaker can win the game on each component of $H$, he wins the game on $H$, and thus wins the $(A,b)$-game on $[n]_p$ w.h.p., as required.
\end{proof}

\section{Concluding remarks and Proof of Theorem~\ref{singlemain}}\label{secrem}
\subsection{Improvements on Breaker's strategy}
The strange fact that our proof of Theorem~\ref{main}(ii) works when $B(A)$ is an $\ell \times k$ matrix such that $\ell$ divides $k-1$ follows precisely via inequality given by (\ref{pf5}). 
We found an equivalence between bicycles and valid edge orders with two bad edges in Claim~\ref{dlc2}. Suppose we extended our language to \emph{$p$-cycles} (corresponding to valid edge orders with $p$ bad edges), and were able to find a winning strategy for Breaker playing the game on any component of $H$ that does not contain a $p$-cycle. We could then obtain a proof for matrices $A$ for which the associated matrix $B$ of dimension $\ell \times k$ satisfies $\ell=p-1$ (without the need for any divisibility conditions). However given the number of cases that arose from considering bicycles, it would seem unfeasible to attempt this.

For a similar reason we did not consider the alternate problem of allowing Maker to try to obtain solutions to $Ax=b$ which are not $k$-distinct. Allowing solutions with repeats would make the associated hypergraphs non-uniform (e.g. for $x+y=z$ there would be edges of size $2$ and $3$) and therefore require a more in-depth case analysis. Note that Kusch, Ru\'e, Spiegel and Szab\'o do consider the analagous problem in the biased version; see Section~4.3 of~\cite{KRSS}.

Lemma~\ref{dl1} gives a precise description of hypergraphs with at most one bad edge and a fixed number of $k$-bad edges.
What can be said of hypergraphs with at most $p$ bad edges (for fixed $p$) and a fixed number of $k$-bad edges? Also note that our definition of a valid edge order (where every edge after the first one is either good or bad, i.e. there are zero $k$-bad edges) is a hypergraph generalisation of a tree. This is since trees have precisely this property in the graph case; a $2$-bad edge here is an edge which completes a cycle. Thus it would be interesting to obtain a more detailed description of hypergraphs with a valid edge order. 

Observe the following connection of this with R\"odl and Ruci\'nski's proof of the $0$-statement of Theorem~\ref{randomrado}. It is very easy to 2-colour a hypergraph with a valid edge order so that it has no monochromatic edges; simply go through the edges in order, colouring the (at least one) new vertex of an edge $e_i$ the colour which was not assigned to one of the old vertices of $e_i$. Thus if the hypergraph associated to $[n]_p$ has a valid edge order, then $[n]_p$ can be $2$-coloured so that there are no monochromatic solutions to $Ax=0$.

\subsection{Matrices which do not satisfy $(*)$}\label{not*}
Matrices $A$ which are irredundant and do not satisfy $(*)$ traditionally have not received as much attention. For such a matrix, $\mathbb{N}$ is not $(A,0,r)$-Rado for any $r\geq 2$ since we can $2$-colour $\mathbb{N}$ and avoid any monochromatic solutions to $Ax=0$ (see Section 4.1 in~\cite{HST}). Also note that $m(A)$ is ill-defined in this case. Further in the bias version of the $(A,b)$-game, recall that Theorem~\ref{KRSSthm}(ii) states that Breaker wins the (1:2) $(A,b)$-game on $[n]$. All of these facts follow easily via use of the row of the matrix which (under Gaussian elimination) has at most two non-zero entries. We show through some examples that the threshold for the random $(A,b)$-game is at least slightly less trivial.
\begin{thm}\label{notstar}
Let $A$ be a fixed integer-valued matrix of dimension $\ell \times k$ and $b$ a fixed integer-valued vector. Given the pair $(A,b)$ is irredundant and $A$ is irredundant and does not satisfy $(*)$, we have the following:
\begin{itemize}
\item[(i)]{If $A=\begin{pmatrix} \alpha & -\beta \end{pmatrix}$ is such that $\alpha, \beta$ are non-equal positive integers, then Maker wins the $(A,b)$-game on $[n]_p$ w.h.p. if $p \gg n^{-1/3}$.}
\item[(ii)]{Breaker wins the $(A,b)$-game on $[n]_p$ w.h.p. if $p \ll n^{-1/3}$.}
\item[(iii)]{If $A=\begin{pmatrix} \alpha & -\beta & 0 \\ 0 & \alpha & -\beta \end{pmatrix}$ is such that $\alpha, \beta$ are non-equal positive integers, then Breaker wins the $(A,0)$-game on $[n]$ (i.e. the non-biased non-random game; thus with probability equal to one, Breaker wins the $(A,0)$-game on $[n]_p$ for any $0<p<1$).}
\end{itemize}
\end{thm}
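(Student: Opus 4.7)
All three parts exploit the fact that failure of $(*)$ guarantees, after Gaussian elimination, a row of $A$ with exactly two non-zero rational entries, yielding an integer equation $\alpha x_i + \gamma x_j = b'$ (after clearing denominators) that every solution of $Ax=b$ must satisfy. I will first prove the base case $A=(\alpha,-\beta)$ underlying both (i) and the main case of (ii), then reduce arbitrary $A$ in (ii) to this base case, and finally handle (iii) separately.

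\textbf{Base case $A=(\alpha,-\beta)$.} The hypergraph $H:=H([n]_p,A,b)$ is a graph whose edges are the pairs $\{x,y\}$ satisfying $\alpha x-\beta y=b$ with $x\neq y$; each integer lies in at most two such pairs, and $H$ has no cycle (a cycle would force $(\alpha/\beta)^m=1$ for some $m\geq 1$, impossible when $\alpha\neq\beta$ are positive). Hence $H$ is a vertex-disjoint union of paths and isolated vertices, and a standard degree-$2$ argument gives the dichotomy: if some component of $H$ is a length-$2$ path $v_0 v_1 v_2$, then Maker wins by playing $v_1$ first and then claiming whichever of $v_0,v_2$ Breaker does not block; otherwise every component of $H$ is a single vertex or a single edge and Breaker wins via the trivial edge-pairing strategy. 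Let $X$ count the triples $(v_0,v_1,v_2)\in[n]_p^3$ with $\alpha v_0=\beta v_1+b$ and $\alpha v_1=\beta v_2+b$. There are $\Theta(n)$ such triples in $[n]$ (each degree-$2$ vertex of the deterministic graph on $[n]$ is the middle of exactly one), so $\ex[X]=\Theta(np^3)$. Markov's inequality yields $X=0$ w.h.p.\ when $p\ll n^{-1/3}$; and a second-moment estimate, in which two overlapping length-$2$ paths in the same chain can share at most two consecutive vertices and thus contribute $O(np^4)$ to the variance, yields $X\geq 1$ w.h.p.\ when $p\gg n^{-1/3}$ (the variance is $o(\ex[X]^2)$ precisely because $p\gg n^{-1/3}$ forces $np^2\to\infty$). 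This proves (i) and the base case of (ii).

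\textbf{General $A$ in (ii).} The integer equation $\alpha x_i+\gamma x_j=b'$ coming from the two-entry row is satisfied by every solution of $Ax=b$, and irredundancy of $(A,b)$ forces the pair $(\hat A,\hat b):=((\alpha,\gamma),b')$ to be irredundant. When $\alpha$ and $\gamma$ have opposite signs, the base case shows Breaker wins the $(\hat A,\hat b)$-game on $[n]_p$ w.h.p.\ for $p\ll n^{-1/3}$, and this transfers to the $(A,b)$-game because any $k$-distinct solution of $Ax=b$ in $[n]_p$ induces a $2$-distinct solution of $\hat A\hat x=\hat b$ in $[n]_p$. When $\alpha$ and $\gamma$ have the same sign, the equation has only $O(1)$ solutions in $\mathbb{N}$, so only an absolute constant number of specific integers can appear as coordinate $x_i$ or $x_j$ of any solution; for $p=o(1)$, w.h.p.\ none of these integers lies in $[n]_p$ and Breaker wins trivially.

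\textbf{Part (iii).} After dividing by $\gcd(\alpha,\beta)$ we may assume $\gcd(\alpha,\beta)=1$, whence the $3$-distinct solutions of $Ax=0$ in $\mathbb{N}$ are exactly $(\beta^2 t,\alpha\beta t,\alpha^2 t)$ for $t\in\mathbb{N}$: the consecutive triples of the geometric chains with ratio $\gamma=\alpha/\beta$. Every integer of $[n]$ lies in a unique maximal such chain $v_0,v_1,\ldots,v_L$ (obtained by successively multiplying by $\gamma$ while the result remains an integer in $[n]$), so $H([n],A,0)$ decomposes into a vertex-disjoint union of $3$-uniform ``consecutive-triple'' hypergraphs, one per chain. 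On each chain Breaker uses the pairing $\{v_{2j},v_{2j+1}\}$ for $j\geq 0$; a direct check shows every edge $\{v_i,v_{i+1},v_{i+2}\}$ contains one of these pairs (namely $\{v_i,v_{i+1}\}$ if $i$ is even and $\{v_{i+1},v_{i+2}\}$ if $i$ is odd). As the chains are vertex-disjoint, these local pairings combine into a global pairing strategy, giving Breaker a deterministic win on all of $[n]$. The only delicate point in the whole argument is the second-moment bookkeeping in the base case, where the edge-sharing of length-$2$ paths creates a variance term of order $np^4$ that is precisely controlled by the hypothesis $p\gg n^{-1/3}$; everything else is a structural reduction or a pairing argument.
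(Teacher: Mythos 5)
Your overall route is the paper's: identify the two-variable constraint forced by the failure of $(*)$, show the relevant structures are ``triples'' (three chain-consecutive integers) whose count in $[n]_p$ has expectation $\Theta(np^3)$, let Maker open with the middle element of a triple for the $1$-statement, let Breaker pair the two elements of each isolated pair for the $0$-statement, and win part (iii) by an explicit pairing along the geometric chains (your position-parity pairing along maximal chains is equivalent to the paper's exponent-parity pairing $\alpha^i\beta^j y \leftrightarrow \alpha^{i+1}\beta^{j-1}y$, and arguably cleaner when $\alpha$ or $\beta$ equals $1$). The only methodological differences are that you run a second-moment/Chebyshev computation where the paper extracts $\Theta(n)$ pairwise disjoint triples and applies Chernoff (both work; note the binding variance term is the diagonal $\Theta(np^3)$, which needs $np^3\to\infty$, not just $np^2\to\infty$), and that you phrase Breaker's low-probability win via the path-structure dichotomy of the solution graph rather than directly.

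There are, however, two gaps to flag. The substantive one is in your reduction for general $A$ in (ii): your base case explicitly assumes the two coefficients are \emph{non-equal} positive integers (you use $(\alpha/\beta)^m\neq 1$ to exclude cycles), but your case split for the eliminated row $\alpha x_i+\gamma x_j=b'$ only distinguishes ``opposite signs'' from ``same sign'' and never rules out $\gamma=-\alpha$. Irredundancy of the pair $(\hat A,\hat b)$ does \emph{not} exclude this (e.g.\ $x_i-x_j=d$ with $d\neq 0$ is irredundant), so as written the base case does not cover it. The fix is exactly what the paper uses: irredundancy of the matrix $A$ itself means the homogeneous row $\alpha x_i+\gamma x_j=0$ must admit a solution in distinct positive integers, forcing opposite signs \emph{and} unequal magnitudes (this also makes your same-sign case vacuous, though your treatment of it is harmless). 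Alternatively, the translation case $x_i-x_j=d$, $d\neq0$, can be absorbed into the base case since its solution graph is again a union of paths. The second, minor, point is that your assertion that $[n]$ contains $\Theta(n)$ triples is exactly the content of the paper's Claim~\ref{existz}: one needs $\gcd(\alpha,\beta)\mid b$ (from irredundancy of $(A,b)$) and a Chinese remainder argument to produce a full residue class of admissible middle elements; this should be stated rather than assumed.
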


\begin{proof}
For the $(A,b)$-game in (i), since $(A,b)$ is irredundant, we have that there exists a solution to $\alpha x_1 - \beta x_2=b$ in $\mathbb{N}$ with $x_1 \not=x_2$. 
Thus we have that $t:=\gcd(\alpha,\beta)$ must divide $b$: hence we may assume without loss of generality that $\gcd(\alpha,\beta)=1$. 

Maker wins the $(A,b)$-game in (i) if there exists a distinct triple $\{(\alpha x-b)/\beta,x,(\beta x+b)/\alpha \} \subseteq [n]_p$, since she can have the first pick and choose $x$. Then she can complete a solution by picking whichever of $(\alpha x-b)/\beta$ and $(\beta x+b)/\alpha$ remains unchosen after Breaker's turn. 

\begin{claim}\label{existz}
There exists a fixed $z \in [0,\alpha \beta-1]$ (depending on $\alpha,\beta,b$) such that whenever $x \equiv z\mod \alpha \beta$, the triple $\{(\alpha x-b)/\beta,x,(\beta x+b)/\alpha \}$ is contained in the integers.
\end{claim}

\begin{proof}
Let $x \in \mathbb{Z}$. Note that we have $(\beta x + b)/\alpha \in \mathbb{Z}$ whenever $\beta x \equiv -b \mod \alpha$. Since $\gcd(\alpha,\beta)=1$, there exists $y \in [0,\alpha-1]$ such that whenever $x \equiv y \mod \alpha$, we have $(\beta x + b)/\alpha \in \mathbb{Z}$. Similarly there exists $y' \in [0,\beta-1]$ such that whenever $x \equiv y' \mod \beta$, we have $(\alpha x-b)/\beta \in \mathbb{Z}$. 
Combining these two facts, the Chinese remainder theorem implies that there
exists $z \in [0,\alpha \beta-1]$ such that whenever $x \equiv z \mod \alpha \beta$ we have $(\beta x + b)/\alpha \in \mathbb{Z}$ and $(\alpha x-b)/\beta \in \mathbb{Z}$.
\end{proof}

Call triples which satisfy the property in Claim~\ref{existz} \emph{good}. From the claim, for sufficiently large $n$ we deduce that $[n]$ contains $n/(2\alpha^2 \beta^2)$ good triples. Further, each $x \in [n]$ is in at most three good triples. Thus, there is a collection $X$ of at least $n/(6 \alpha^2 \beta^2)$ good triples in $[n]$ that are all pairwise disjoint. The expected number of triples in $X$ in $[n]_p$ is $\Theta(np^3)$. Hence if $p \gg n^{-1/3}$ then by Proposition~\ref{Chernoff} w.h.p. there exists $x$ such that $\{(\alpha x-b)/\beta,x,(\beta x+b)/\alpha \} \subseteq [n]_p$, so Maker wins as required.

If $p \ll n^{-1/3}$, then the expected number of triples is $o(1)$, so via Proposition~\ref{Markov} w.h.p. there are no triples of this form at all. For the game in (ii), since $A$ is irredundant but does not satisfy $(*)$, under Gaussian elimination there exists one row of $A$ which consists of $\alpha,-\beta$ (where $\alpha$ and $\beta$ are non-equal positive integers) and zeroes, and thus any solution to $Ax=b$ contains the positive integers $(\beta z+c)/\alpha$ and $(\beta ((\beta z +c)/\alpha)+ c)/\alpha$ for some rational number $z$ and fixed integer $c=c(A,b)$. Since w.h.p. there are no triples (replacing $b$ with $c$ and $x$ with $z$), whenever we have the pair $(\beta z+c)/\alpha, (\beta ((\beta z +c)/\alpha)+ c)/\alpha \in [n]_p$, then $z, (\beta ((\beta ((\beta z +c)/\alpha)+ c)/\alpha)+c)/\alpha \notin [n]_p$. Thus Breaker can devise a pairing strategy to win the game.

For (iii), as in (i) we may assume without loss of generality that $\gcd(\alpha,\beta)=1$. Then every solution in $[n]$ is a triple of the form $\{\alpha^2 x,\alpha \beta x,\beta^2 x\}$ for some $x \in \mathbb{N}$. Every element of $[n]$ which could be in a solution is of the form $\alpha^i \beta^j y$ with $y \in \mathbb{N}$, where $\alpha$ and $\beta$ do not divide $y$, and $i,j$ are non-negative integers where at most one of $i$ or $j$ is zero. 
Using these facts, Breaker has a strategy to win the game. Indeed, Breaker can create a pairing strategy as follows: pair $\alpha^i \beta^j y$ with $\alpha^{i+1} \beta^{j-1} y$ whenever $i$ is even and $j \geq 1$. Then observe that for any triple $\{\alpha^2 x,\alpha \beta x,\beta^2 x\}$ with $x \in \mathbb{N}$, the middle element is paired with one of the two end elements, so Maker cannot obtain a triple.  
\end{proof}

\subsection{Proof of Theorem~\ref{singlemain}}\label{secone}
We conclude the paper by finishing the proof of Theorem~\ref{singlemain}. 

\begin{thm-hand}[~\ref{singlemain}]
Let $A$ be a fixed integer-valued matrix of dimension $1 \times k$ and $b$ a fixed integer (i.e. $Ax=b$ corresponds to a single linear equation $a_1 x_1 + \dots + a_k x_k=b$ with the $a_i$ non-zero integers). 
\begin{itemize}
\item[(i)]{If the pair $(A,b)$ is irredundant and $A$ is irredundant and satisfies $(*)$, then the $(A,b)$-game on $[n]_p$ has a threshold probability of $\Theta(n^{-\frac{k-2}{k-1}})$;}
\item[(ii)]{If the pair $(A,b)$ is irredundant and $A$ is irredundant and does not satisfy $(*)$, then the $(A,b)$-game on $[n]_p$ is Maker's win if $p \gg n^{-1/3}$ and Breaker's win if $p \ll n^{-1/3}$;}
\item[(iii)]{If the pair $(A,b)$ is irredundant and $A$ is not irredundant, then 
\begin{itemize}
\item[(a)]{the $(A,b)$-game on $[n]_p$ is Breaker's win w.h.p. for any $p=o(1)$ if the coefficients $a_i$ are all positive or all negative;}
\item[(b)]{the $(A,b)$-game on $[n]_p$ is Maker's win if $p \gg n^{-1/3}$ and Breaker's win if $p \ll n^{-1/3}$ otherwise;}
\end{itemize}}
\item[(iv)]{If the pair $(A,b)$ is not irredundant, then the $(A,b)$-game on $[n]$ is (trivially) Breaker's win.}
\end{itemize} 
\end{thm-hand} 

\begin{proof}
\begin{itemize}
\item[(i)]{As discussed in the introduction, this follows immediately from Theorem~\ref{main}.}
\item[(ii)]{We have $k \geq 3$ if and only if $A$ satisfies $(*)$. Hence if $A$ does not satisfy $(*)$ and is a linear equation, we must have $k=2$. So write $A=\begin{pmatrix} \alpha & \beta \end{pmatrix}$, where $\alpha, \beta \in \mathbb{Z}$. Note that since $A$ is irredundant there exist $x_1, x_2 \in \mathbb{N}$ such that $\alpha x_1+\beta x_2=0$ where $x_1 \not= x_2$. Thus we must have $\alpha>0$ and $\beta<0$ or vice versa, and $\alpha \not= -\beta$. Thus the result follows from Theorem~\ref{notstar}(i) and (ii). 
}
\item[(iii)]{Note that any linear equation $a_1 x_1 + \dots a_k x_k=0$ with $k \geq 3$ clearly has a $k$-distinct solution in $\mathbb{N}$ if there exists at least one positive $a_i$ and at least one negative $a_j$, for some $i,j \in [k]$. The same holds for $k=2$ unless if $a_1=-a_2$. Thus, since $A$ is not irredundant, we have one of the following: 
\begin{itemize}
\item[(a)]{the $a_i$ are all positive integers or all negative integers;}
\item[(b)]{we have $k=2$ and $a_1=-a_2$.}
\end{itemize}
For (a), we may assume without loss of generality that $a_1,\dots,a_k$ and therefore $b$ are positive integers. For such a game there are a finite number of $k$-distinct solutions in $\mathbb{N}$, all of which are contained in $[b]$. Thus for any $p=o(1)$, w.h.p. there are no solutions in $[n]_p$ by Proposition~\ref{Markov}, so the game is Breaker's win.
For (b), the existence of any triple $\{x-b/a_1,x,x+b/a_1\}$ leads to a win for Maker, meanwhile if no triples exist then Breaker can win by a pairing strategy. Since the number of such triples in $[n]_p$ is of order $np^3$, the result follows by a similar argument to that given for Theorems~\ref{notstar}(i) and (ii).
}
\item[(iv)]{The $(A,b)$-game is trivially Breaker's win, since there are no winning sets in $\mathbb{N}$.}
\end{itemize}
\end{proof}

\section*{Acknowledgements}
The author would like to thank Christoph Spiegel for helpful conversations on~\cite{KRSS}.  The author is also grateful to Andrew Treglown for his general support and encouragement for writing this paper, and for reading the manuscript. Finally the author thanks the anonymous referees for their helpful comments and suggestions.

{\footnotesize \obeylines \parindent=0pt

Robert Hancock
Institute of Mathematics
Czech Academy of Sciences
\v{Z}itn\'a 25
110 00
Praha
Czechia
}
\begin{flushleft}
{\emph{E-mail address}:
\tt{hancock@math.cas.cz}}
\end{flushleft}

\end{document}